\newtheorem{thm}{Theorem}[section]
\newtheorem{lem}[thm]{Lemma}
\newtheorem{defn}[thm]{Definition}
\newtheorem{rmk}[thm]{Remark}
\newtheorem{prop}[thm]{Proposition}
\newcommand{\ie }{\emph{i.e.}, }
\newcommand{\resp}{\emph{resp.} }
\newcommand{\calF}{\mathcal{F}}
\newcommand{\calX}{\mathcal{X}}
\newcommand{\calP}{\mathcal{P}}
\newcommand{\calE}{\mathcal{E}}
\newcommand{\bbR}{\mathbb{R}}
\newcommand{\bbP}{\mathbb{P}}
\newcommand{\bbE}{\mathbb{E}}
\newcommand{\bbS}{\mathbb{S}}
\DeclareMathOperator*{\argmin}{argmin} 
\newcommand{\norm}[1]{\left\lVert#1\right\rVert}
\newcommand{\abs}[1]{\left\vert#1\right\vert}
\newcommand{\floor}[1]{\left\lfloor#1\right\rfloor}
\newcommand{\indicator}{\mathds{1}}
\newcommand{\iid}{\overset{i.i.d}{\sim}}
\newcommand{\set}[1]{\left\{#1\right\}}
\DeclareMathOperator*{\tr}{tr}
\def\checkmark{\tikz\fill[scale=0.4](0,.35) -- (.25,0) -- (1,.7) -- (.25,.15) -- cycle;}
\begin{document}

%

%

\twocolumn[

\aistatstitle{Robust Estimation in metric spaces: Achieving Exponential Concentration with a Fr\'echet Median}

\aistatsauthor{Jakwang Kim* \And Jiyoung Park* \And Anirban Bhattacharya}

\aistatsaddress{University of British Columbia \And  Texas A\&M University \And Texas A\&M University}]

\begin{abstract}
There is growing interest in developing statistical estimators that achieve exponential concentration around a population target even when the data distribution has heavier than exponential tails. More recent activity has focused on extending such ideas beyond Euclidean spaces to Hilbert spaces and Riemannian manifolds. In this work, we show that such exponential concentration in presence of heavy tails can be achieved over a broader class of parameter spaces called CAT($\kappa$) spaces, a very general metric space equipped with the minimal essential geometric structure for our purpose, while being sufficiently broad to encompass most typical examples encountered in statistics and machine learning. The key technique is to develop and exploit a general concentration bound for the Fr\'echet median in CAT($\kappa$) spaces. We illustrate our theory through a number of examples, and provide empirical support through simulation studies.
\end{abstract}

\section{Introduction}

A fundamental challenge in statistical estimation pertains to dealing with heavy-tailed data. Many estimators used in practice are built upon the assumption of light-tailed data, and their finite-sample (or non-asymptotic) statistical properties do not necessarily carry over when the data generating distribution has heavy tails. A standard way to characterize such non-asymptotic behavior of statistical estimators is via concentration inequalities; see \cite{boucheron2013concentration} for a book-level treatment. Given an estimator $\widehat{\theta}_n$ based on $n$ samples for parameter $\theta$, a concentration inequality provides a \emph{non-asymptotic bound} to some distance $d(\widehat{\theta}_n, \theta)$ being greater than a tolerance level $\varepsilon > 0$, as a function of $n$ and $\varepsilon$. Inverting such a concentration inequality, one can obtain a growth rate on the sample size as a function of the tolerance level $\varepsilon$ to provably achieve a desired level of confidence (say $95\%$). As a simple illustrative example, the sample average of i.i.d. real-valued observations concentrate exponentially fast (in terms of sample size) around the population mean if the true data distribution has sub-exponential tails by Bernstein's inequality. However, if one weakens the tail assumption to expand the class of true distributions to those with finite second moment, then one can only establish polynomial concentration as dictated by Chebyshev's inequality \citep{catoni2012challenging}. Exponential concentration is desirable not only to obtain logarithmic dependence of the sample size on the tolerance level for a single estimator, but also to combine multiple dependent estimators via a union bound. Accordingly, there has been extensive recent research towards constructing alternative `robust' estimators that achieve exponential concentration rates in situations where standard M-estimators (or method of moment estimators) fail to provide one \citep{minsker2019uniformboundsrobustmean, jean2011robust, oliveira2024improvedcovarianceestimationoptimal}.





Modern statistics and machine learning routinely encounter data beyond the classical Euclidean settings. Hyperspheres are used to model the directional data and spatial data \citep{hall1987kernel, jeong2017spherical, zhang2021kernel}; Hilbert spaces serve as a base space in functional data analysis \citep{petersen2016functional}; hyperbolic spaces have become popular for hierarchical data \citep{nickel2017poincare}; and graph and tree data are predominant in network data analysis \citep{fortunato2010community, abuata2016metrictree}. Accordingly, several recent attempts have been made to address statistical problems in more general spaces; see, e.g., \citet{arnaudon2013medians, brunel2023concentration, holmes2003trees, kostenberger2024robust, romon2023convex, sturm2000NPC} for some representative examples.


Motivated by the extensive literature on robust estimations and statistical methods beyond Euclidean spaces, this work proposes a method for robust estimation--boosting weakly concentrating estimators to strongly concentrate--in general metric spaces under minimal assumptions.
Specifically, we focus on CAT($\kappa$) spaces, which is a general metric space with the minimal essential geometric structure necessary for our purposes, and introduce a procedure to boost weakly concentrating estimators in these spaces. CAT($\kappa$) spaces encompass not only widely studied spaces such as Hilbert spaces (Euclidean spaces) and Riemannian manifolds, but also other various spaces that have gained attention in data science and yet have been less examined in the context of robust estimation; see Section \ref{section_prelim_catk} for examples.


The contributions of this work are as follows:
\begin{compactitem}
    \item[(1)] We propose a method to boost estimators with polynomial concentration to strong (exponential) concentration in CAT($\kappa$) spaces; a 
    general setting that encompasses most spaces concerned in statistics and machine learning; by leveraging 
    properties of the Fréchet median (Section \ref{section_robustify}).
    \item[(2)] We show the applicability of our method across a wide range of statistical problems (Section \ref{section_applications}).
    \item[(3)] We show the proposed method allows for a tractable algorithm, and verify its strength numerically (Section \ref{section_implementation_experi}).
\end{compactitem}

\section{Preliminaries}\label{section_prelim}

\subsection{CAT($\kappa$) space}\label{section_prelim_catk}

In this section, we introduce CAT($\kappa$) spaces. A more rigorous definition is provided in Appendix \ref{appendix_alexandrov_catk}.

For each $\kappa \in \mathbb{R}$, the 2-dimensional model spaces $M^2_\kappa$ are defined as follows:
\[
M^2_\kappa = \begin{cases}
    \mathbb{R}^2 &\quad \text{if $\kappa=0$},\\
    \frac{1}{\sqrt{\kappa}}\mathbb{S}^2 &\quad \text{if $\kappa>0$},\\
    \frac{1}{\sqrt{\abs{\kappa}}}\mathbb{H}^2 &\quad \text{if $\kappa < 0$}
\end{cases}
\]
where $\mathbb{S}^2$ and $\mathbb{H}^2$ are the 2-dimensional unit sphere and hyperbolic plane, respectively. A geodesic space $(\calX, d)$ is a CAT($\kappa$) space if every geodesic triangle in $\calX$ has a \emph{comparison triangle} in $M^2_\kappa$ with the same side lengths, such that the original triangle is \emph{thinner} (a precise  mathematical formulation is given in Appendix \ref{appendix_alexandrov_catk}). This condition, known as the \emph{CAT($\kappa$) inequality}, describes how much the space is curved, allowing curvature of spaces to be defined with minimal structure. See Figure \ref{figure:catk_triangle}. By definition, $\mathbb{R}^2$, $\mathbb{S}^2$, and $\mathbb{H}^2$ are CAT(0), CAT(1), and CAT(-1) spaces, respectively.

\begin{figure}[t]
\centering
\includegraphics[width=0.16\textwidth]{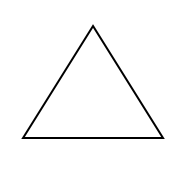}\hfill
\includegraphics[width=0.16\textwidth]{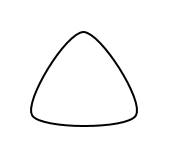}\hfill
\includegraphics[width=0.16\textwidth]{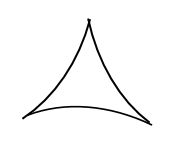}
%
\caption{Triangles in $M_{\kappa}^2$ spaces for different $\kappa$. Triangles in a CAT($\kappa$) space is thinner than triangles in $M_{\kappa}^2$. \textbf{Left}: Euclidean triangle ($\kappa = 0$). \textbf{Middle}: Spherical triangle ($\kappa > 0$). \textbf{Right}: Hyperbolic triangle ($\kappa < 0$).  }\label{figure:catk_triangle}
\end{figure}



A lot of spaces studied in practice belong to the category of CAT($\kappa$) spaces. Below are some examples. 
\begin{compactitem}
    \item Riemannian manifolds with sectional curvature upper bounded by $\kappa$: Hyperspheres, hyperbolic spaces, information geometry, Kendall shape spaces, space of Symmetric Positive Definite (SPD) matrices, to name a few.
    \item Infinite dimensional spaces: Hilbert spaces and infinite dimensional hyperbolic spaces are prominent examples of CAT(0). 
    \item Singular spaces and stratified spaces: These spaces have gained interest recently \citep{geiger2001stratifiedexponential, mattingly2023centrallimittheoremsfrechet, mattingly2023geometrymeasuressmoothlystratified, mattingly2023shadowgeometrysingularpoints} but do not fall under classical spaces. On the other hand, many of these spaces are CAT($\kappa$) spaces \citep{burago2001course}[Theorem 9.1.21]. 
    \item Spaces of phylogenetic trees: Phylogenetic trees are widely studied objects in the field of biology and statistics. The space of phylogenetic trees can be endowed with a metric that makes it CAT(0) \citep{billera2001geometry}.
    \item Metric graphs and trees: Any metric graph with cycles of length less than $2 \pi$ is CAT(1) \citep{brown2016gluing}[Remark 2.12]. Specifically, metric trees are CAT(0).
\end{compactitem}

In particular, CAT(0) spaces are often referred to as Hadamard spaces or Non-Positively Curved (NPC) spaces in the literature. They have drawn attention in statistics community due to their applicability to practical examples and favorable properties when incorporating a probability measure \citep{arnaudon2013medians, brunel2023concentration, LeGouic2019FastCO, kostenberger2024robust, romon2023convex, sturm2000NPC, yun2023expoential}. These favorable properties are discussed in Remark \ref{rmk_frechet_exist_unique}.

\subsection{Fr\'echet mean and median}\label{section_frechet_mean_median}

In this section, we introduce the notions of Fr\'echet mean and median, which are generalizations of classical mean and median to a general separable and complete metric space $(\calX, d)$. 
Define $\calP_p(\calX)$ as the set of Borel probability measures $P$ on $\calX$ with a finite $p^{th}$ moment, \ie $\int_{\calX} d^p(x,y) dP(y) < \infty$ for some $x \in \calX$. 


\begin{defn}[Fr\'echet mean and median]
    Given $P \in \calP_p(\calX)$, suppose $x^* \in \calX$ satisfies
    \[
    x^* \in \argmin_{x \in \calX} \int_{\calX} d^p(x,y) dP(y).
    \]
    Any such $x^*$ with $p = 1$ is called a Fr\'echet median of $P$, and with $p=2$ is called a Fr\'echet mean of $P$.
\end{defn}

Fr\'echet mean and median are also referred to as \emph{barycenter} and \emph{geometric median}, respectively, in some literature. 

\begin{rmk}[Existence and uniqueness of the Fr\'echet mean and median]\label{rmk_frechet_exist_unique}
For NPC spaces, Fr\'echet mean (\resp median) globally exists for any probability measure with a finite second (\resp first) moment \citep{bacak2014computing}[Lemma 2.3]. In addition, Fr\'echet mean is unique \citep{bacak2014computing}[Theorem 2.4]. Fr\'echet median may not be unique, but when it is not unique they form a single geodesic segment in the same manner medians behave in $\bbR$ \citep{schötz2024varianceinequalitiestransformedfrechet}[Theorem 6.6]. On the other hand, for CAT($\kappa$) with $\kappa > 0$, a probability measure in addition needs to be supported within a ball of radius smaller than $D_{\kappa}/2$ to guarantee the (unique) existence of a Fr\'echet median (resp. mean) \citep{yokota2017convexfunctions}. 
\end{rmk}

We lastly note that the Fr\'echet mean and median are not the only extensions of the classical mean and median to a general metric space. \citet{sturm2000NPC}[Section 7] introduced a convex mean that coincides with the Fr\'echet mean in Euclidean space but not necessarily in a general metric space. Similarly, \citet{lugosi2016risk} suggested a tournament-based median, which matches the Fr\'echet median in $\bbR$ but not beyond it. In addition, \citet{Dai2021TukeysDF} suggested to extend the idea of Tukey's depth based median to general geodesic spaces.

\subsection{Robust estimation and a median-of-means estimator}\label{section_mom}


\citet{catoni2012challenging} was one of the first to demonstrate the existence of an M-estimator that achieves exponential concentration only under moment conditions in $\bbR$. Since then, numerous studies have explored the existence of estimators with similar favorable properties in more general spaces. For instance, \citet{lugosi2021robust} generalized the notion of `trimmed mean' to $\bbR^d$ and showed its exponential concentration.


One of the most extensively studied methods for this pursuit is a median-of-means (MoM) estimator \citep{nemirovsky1983problemcomplexity, alon1996thespacecomplexity}. The construction of a MoM estimator is as follows:  first split $n$ data into $k$ disjoint blocks, compute sample means for each block, and then take a median over those means. A MoM estimator interpolates between the mean estimator (when $k = 1$) and the median estimator (when $k = n$). Therefore, choosing an appropriate $k$ guarantees accurate estimation of the mean with a level of robustness similar to that of the median. 
\citet{lerasle2011robust} showed MoM estimators achieve exponential concentration over the real line. Beyond $\bbR$, different choices of medians lead to different MoM estimators. \citet{minsker2015geometric, lin2020RobustOA} utilize the Fr\'echet median as a median and demonstrate the existence of MoM estimators in Banach spaces and certain families of Riemannian manifolds. Meanwhile, \citet{lugosi2019subgaussian, yun2023expoential} define MoM via a ``median-of-means tournament'' and derive concentration bounds in $\bbR^d$ and NPC spaces, although their MoM estimators face computational issues \citep{lugosi2019subgaussian, yun2023expoential}[Section 4, 6]. The philosophy of MoM has been extended to more general target quantities beyond the mean. For example, \citet{minsker2017subsetposterior} proposed a robust posterior distribution using MoM-inspired ideas. 

\section{Main result: Boosting the weak estimators by Fr\'echet median}\label{section_robustify}

In this section, we adopt the Fr\'echet median robust estimation in CAT($\kappa$) spaces based on the method proposed by \citet{minsker2015geometric}. \citet{minsker2015geometric}[Lemma 2.1] establishes a key link between the Fr\'echet median and robustness in Hilbert spaces, which is later extended to Riemannian manifolds with some assumptions by \citet{lin2020RobustOA}[Lemma 2.1].
The idea is that the empirical Fr\'echet median controls the geometric discrepancies between data points, which guarantees the desirable concentration. One of the main contributions of this work is the extension of this idea to general CAT($\kappa$) spaces.

For $(\calX, d)$ a CAT($\kappa$) space, an empirical Fr\'echet median of $x_1, \dots, x_k \in \calX$ is defined by the Fr\'echet median of the empirical measure $\sum_{j=1}^{k}\delta_{x_j}/k$. We will use the notation $\text{med}\left(x_1, \dots, x_k \right)$ to denote the empirical Fr\'echet median of $x_1, \dots, x_k$.



\begin{lem}[Geometric discrepency near the Fr\'echet median] \label{lem_geo_med}
    Let $(\calX, d)$ be a CAT($\kappa$) space, and fix $x_1, \dots, x_k \in \calX$. Denote $x^* := \text{med}\left(x_1, \dots, x_k\right)$. Fix $\alpha \in (0,0.5)$ and write $C_{\alpha} = (1-\alpha)(1-2\alpha)^{-1/2}$. Suppose either (a) or (b) holds:
    \begin{compactitem}
        \item[(a)] For $\kappa \leq 0$, assume there exists $z \in \calX$ such that $d(x^*, z) > C_{\alpha} r$ for some $ r > 0$.
        \item[(b)] For $\kappa > 0$, write $D_{\kappa} = \pi/\sqrt{\kappa}$. Assume $x^*$ exists, $x_j \in B(x^*, D_{\kappa}/2)$ for all $j = 1, \ldots, k$, and there exists $z \in \calX$ such that $ \frac{\pi}{2} C_{\alpha} r < d(x^*, z) \leq D_{\kappa}/2$ for some $0 < r < D_{\kappa}/(C_{\alpha} \pi)$.
    \end{compactitem} 
Under (a) or (b), there exists a subset $J \subseteq \{1, \dots, k\}$, with cardinality $|J| > \alpha k$, such that for all $j \in J$, $d(x_j, z) > r$.
\end{lem}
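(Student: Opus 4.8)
\emph{Proof plan.} I would argue by contradiction from the minimality of $x^*$. Put $I = \{j : d(x_j, z) \le r\}$ and suppose the conclusion fails, so $|\{1,\dots,k\}\setminus I| \le \alpha k$ and therefore $|I| \ge (1-\alpha)k$. Let $f(x) = \sum_{j=1}^{k} d(x, x_j)$, so that $x^*$ minimizes $f$, and let $\gamma \colon [0,1] \to \calX$ be the geodesic from $x^* = \gamma(0)$ to $z = \gamma(1)$ (unique, since a CAT($\kappa$) space with $\kappa \le 0$ is uniquely geodesic, and in case (b) all relevant points lie in a ball of radius $\le D_\kappa/2 < D_\kappa$). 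For each $j$ with $x_j \ne x^*$, take a comparison triangle for $(x^*, z, x_j)$ in $M^2_\kappa$ and let $\bar\gamma(t)$ be the point on the comparison side $[\bar x^*, \bar z]$ at parameter $t$; the CAT($\kappa$) inequality gives $d(\gamma(t), x_j) \le d_{M^2_\kappa}(\bar\gamma(t), \bar x_j)$, a smooth function of $t$, while $d(\gamma(t), x^*) = t\, d(x^*, z)$ exactly. Since $f(\gamma(t)) \ge f(x^*)$ for all $t > 0$, dividing by $t$, letting $t \to 0^+$, and applying the first variation formula for the distance in the smooth model $M^2_\kappa$ yields
\[
\sum_{j : \, x_j \ne x^*} \cos \overline{\angle}_{x^*}(z, x_j) \;\le\; m, \qquad m := \#\{j : x_j = x^*\},
\]
where $\overline{\angle}_{x^*}(z,x_j)$ is the comparison angle at $\bar x^*$ in $M^2_\kappa$. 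Every index with $x_j = x^*$ has $d(x_j, z) = d(x^*, z) > r$ (because $C_\alpha > 1$, and $(\pi/2)C_\alpha > 1$ in case (b)), so all $m$ such indices lie in $\{1,\dots,k\}\setminus I$.

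The core of the argument is a lower bound on $\cos\overline{\angle}_{x^*}(z, x_j)$ for the good indices $j \in I$. Writing $a := d(x^*, z)$ and $c := d(z, x_j) \le r$, I minimize the comparison angle at $\bar x^*$ over the admissible range $|a - c| \le d(x^*, x_j) \le a + c$ of the remaining side. In case (a), a CAT($\kappa$) space with $\kappa \le 0$ is also CAT(0), so one may use the planar law of cosines, and the minimum is $\sqrt{1 - c^2/a^2}$ (attained at third side $\sqrt{a^2 - c^2}$). Since $a > C_\alpha r \ge C_\alpha c$, and $\sqrt{1 - C_\alpha^{-2}} = \alpha/(1-\alpha)$ by the very definition of $C_\alpha$, this gives $\cos\overline{\angle}_{x^*}(z, x_j) > \alpha/(1-\alpha)$ for every $j \in I$.

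In case (b), the same optimization via the spherical law of cosines in $M^2_\kappa$ gives the minimum $\sqrt{\cos^2(\sqrt{\kappa}\,c) - \cos^2(\sqrt{\kappa}\,a)}\big/\sin(\sqrt{\kappa}\,a)$, and a brief manipulation shows this exceeds $\alpha/(1-\alpha)$ exactly when $\sin(\sqrt{\kappa}\,a) > C_\alpha \sin(\sqrt{\kappa}\,c)$. Now $\sqrt{\kappa}\,a \le \pi/2$ since $a \le D_\kappa/2$, so concavity of $\sin$ on $[0,\pi/2]$ gives $\sin(\sqrt{\kappa}\,a) \ge (2/\pi)\sqrt{\kappa}\,a$; combined with $a > (\pi/2) C_\alpha r \ge (\pi/2) C_\alpha c$ and $\sin(\sqrt{\kappa}\,c) \le \sqrt{\kappa}\,c$, the required strict inequality follows, so again $\cos\overline{\angle}_{x^*}(z, x_j) > \alpha/(1-\alpha)$ for all $j \in I$. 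I expect this spherical estimate to be the main obstacle: it is precisely the slack in $\sin x \ge (2/\pi)x$ that dictates the extra factor $\pi/2$ in hypothesis (b), and care is also needed with the degenerate indices $x_j = x^*$ and with the one-sided limit $t \to 0^+$ obtained from the comparison bound.

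Finally, I would combine the two steps: splitting the sum in the first display over $I$ and its complement, bounding $\cos\overline{\angle}_{x^*}(z, x_j) > \alpha/(1-\alpha)$ on $I$ and $\cos\overline{\angle}_{x^*}(z, x_j) \ge -1$ on the complement (which contains all $m$ degenerate indices), one obtains $\frac{\alpha}{1-\alpha}\,|I| < |\{1,\dots,k\}\setminus I|$. Since $|I| \ge (1-\alpha)k$, this forces $|\{1,\dots,k\}\setminus I| > \alpha k$, contradicting the assumption. Hence $|\{1,\dots,k\}\setminus I| > \alpha k$, and $J := \{j : d(x_j, z) > r\}$ is the desired subset.
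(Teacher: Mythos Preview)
Your proof is correct and follows essentially the same route as the paper: contradiction from the minimality of $x^*$, the first–variation inequality along the geodesic $[x^* z]$, passage to comparison angles via the CAT($\kappa$) inequality, and the Euclidean triangle bound $\cos\widetilde\gamma_j \ge \sqrt{1-1/C_\alpha^2}=\alpha/(1-\alpha)$ for the ``good'' indices. The only noteworthy difference is in case~(b): the paper invokes the Rauch comparison theorem on $M^2_\kappa$ to obtain that $\log_{\widetilde x^*}$ is $(\pi/2)$-Lipschitz on $B(\widetilde x^*, D_\kappa/2)$ and then runs the Euclidean triangle argument in the tangent space, whereas you work directly with the spherical law of cosines and the Jordan inequality $\sin x \ge (2/\pi)x$ on $[0,\pi/2]$. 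These are the same estimate in two guises---the Rauch constant $\sin(\sqrt{\kappa}R)/(\sqrt{\kappa}R)$ at $R=D_\kappa/2$ is exactly $2/\pi$---so your version is a touch more self-contained while the paper's makes the geometric source of the $\pi/2$ factor explicit.
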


Lemma \ref{lem_geo_med} implies that if a point $z$ is far away from a Fr\'echet median, then it also has to be far away from the bulk of the points, $x_j$'s. The proof of Lemma \ref{lem_geo_med} relies on the behavior of the triangle $\triangle x_j x^* z$ in CAT($\kappa$) spaces. Utilizing the CAT($\kappa$) inequality, or the triangle comparison, it turns out that such $z$ cannot be close to $x_j$'s. A complete proof will be provided in Appendix \ref{appendix_proof_section_robustify}.

\begin{rmk}\label{rmk_curvature}
    \begin{compactitem}
        \item []
        \item[1.] 
        When $\kappa > 0$, the assumptions on the positions of points and the existence of $x^*$ are required. As mentioned in Remark \ref{rmk_frechet_exist_unique}, this is possible whenever points are distributed in a ball of radius smaller than $D_{\kappa}/2$.
        \item[2.] Lemma \ref{lem_geo_med} covers the case of Hilbert spaces discussed in \citet{minsker2015geometric}, but does not extend to Banach spaces. This limitation arises from the fact that, unlike Hilbert triangles, Banach triangles may not satisfy inner-product-based inequalities such as the Cauchy-Schwarz inequality. For more details, see \citet{khamsi2017generalizedcat0}.
        \item[3.] Restricted to Riemannian manifolds, the curvature upper bound condition in Lemma \ref{lem_geo_med} implies the Lipschitz logarithmic map condition proposed by \citet{lin2020RobustOA}[Lemma 2.1], with the Lipschitz constant being $\pi/2$. We conjecture that the converse is also true: the Lipschitz logarithmic map condition is valid only if the sectional curvature at $x^*$ is upper bounded. If this conjecture holds, then Lemma \ref{lem_geo_med} precisely encompasses the findings of \citet{lin2020RobustOA}. 
        
    \end{compactitem}
\end{rmk}

Now, we proceed to the main theorem.

\begin{thm}[Boosting a weak estimator]\label{thm_concen_of_med}
    Suppose the parameter space $\Theta$ is CAT($\kappa$) space. Let $\theta \in \Theta$ be a parameter of interest and $\widehat{\theta}_{j}$, $j = 1, \dots, k$ be independent estimators of $\theta$. Let $\widehat{\theta}_{FMoE}:= \text{med}\left(\widehat{\theta}_1, \dots, \widehat{\theta}_k\right)$ be a `Fr\'echet median of estimators'.
    
    Fix $\alpha \in (0,1/2)$ and $p \in (0, \alpha)$. Write $\psi(\alpha, p) := (1-\alpha)\log \frac{1-\alpha}{1-p} + \alpha \log \frac{\alpha}{p}$ and set $C_{\alpha}$ same as in Lemma~\ref{lem_geo_med}.
    \begin{compactitem}
        \item[(a)] For $\kappa \leq 0$, suppose there exists $\epsilon > 0$ such that $\bbP\left(d(\widehat{\theta}_j, \theta) > \epsilon\right) \leq p$ for all $j = 1, \dots, k$. Then,
        \begin{align*}
            &\bbP\left[d(\widehat{\theta}_{FMoE}, \theta) > C_{\alpha} \epsilon\right] \leq \exp\left(-k\psi(\alpha, p)\right).
        \end{align*}
    \item[(b)] For $\kappa > 0$, suppose there exists $\epsilon \in (0, D_{\kappa}/(\pi C_{\alpha}))$ such that $\bbP\left(d(\widehat{\theta}_j, \theta) > \epsilon\right) \leq p$ for all $j = 1, \dots, k$. Assume $\widehat{\theta}_{FMoE}$ exists, $\widehat{\theta}_j \in B(\widehat{\theta}_{FMoE}, D_{\kappa}/2)$, and  $\widehat{\theta}_{FMoE} \in B(\theta, D_{\kappa}/2)$ almost surely. Then,  
    \begin{align*}
        &\bbP\left[d(\widehat{\theta}_{FMoE}, \theta) > \frac{\pi C_{\alpha} \epsilon}{2} \right] \leq \exp\left(-k\psi(\alpha, p)\right).
    \end{align*}
    \end{compactitem}
\end{thm}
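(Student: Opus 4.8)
The plan is to reduce Theorem~\ref{thm_concen_of_med} to Lemma~\ref{lem_geo_med} via a contrapositive argument combined with a binomial tail bound. The key observation is that Lemma~\ref{lem_geo_med} says: if the Fr\'echet median $\widehat\theta_{FMoE}$ lands far from some point $z$ (at distance exceeding $C_\alpha r$ in the $\kappa\le0$ case, or exceeding $\tfrac{\pi}{2}C_\alpha r$ in the $\kappa>0$ case), then strictly more than $\alpha k$ of the estimators $\widehat\theta_j$ must lie at distance $>r$ from $z$. Taking $z = \theta$ and $r = \epsilon$, the contrapositive reads: if at most $\alpha k$ of the $\widehat\theta_j$ satisfy $d(\widehat\theta_j,\theta)>\epsilon$, then $d(\widehat\theta_{FMoE},\theta)\le C_\alpha\epsilon$ (resp. $\le \tfrac{\pi}{2}C_\alpha\epsilon$). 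Hence the bad event $\{d(\widehat\theta_{FMoE},\theta) > C_\alpha\epsilon\}$ is contained in the event $\{\#\{j : d(\widehat\theta_j,\theta)>\epsilon\} > \alpha k\}$.

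First I would verify the applicability of Lemma~\ref{lem_geo_med} in each case. For (a), $\kappa\le0$: the only hypothesis is the existence of a $z$ with $d(\widehat\theta_{FMoE},z) > C_\alpha r$; on the bad event this holds with $z=\theta$, $r=\epsilon$, so nothing extra is needed. For (b), $\kappa>0$: I need $\widehat\theta_{FMoE}$ to exist (assumed), $\widehat\theta_j \in B(\widehat\theta_{FMoE}, D_\kappa/2)$ for all $j$ (assumed a.s.), and a point $z$ with $\tfrac{\pi}{2}C_\alpha r < d(\widehat\theta_{FMoE},z) \le D_\kappa/2$ for some $0 < r < D_\kappa/(C_\alpha\pi)$. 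Taking $z=\theta$ and $r=\epsilon$, the constraint $\epsilon \in (0, D_\kappa/(\pi C_\alpha))$ gives the right range for $r$, the assumption $\widehat\theta_{FMoE}\in B(\theta,D_\kappa/2)$ gives the upper bound $d(\widehat\theta_{FMoE},\theta)\le D_\kappa/2$, and the bad event $d(\widehat\theta_{FMoE},\theta) > \tfrac{\pi}{2}C_\alpha\epsilon$ gives the lower bound. So Lemma~\ref{lem_geo_med} applies on the bad event and produces the index set $J$ with $|J| > \alpha k$ and $d(\widehat\theta_j,\theta) > \epsilon$ for $j\in J$.

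Next I would bound the probability of the containing event. Let $Y_j = \indicator\{d(\widehat\theta_j,\theta) > \epsilon\}$; by hypothesis $\bbP(Y_j = 1) \le p$, and the $\widehat\theta_j$ are independent, so $\sum_j Y_j$ is stochastically dominated by $\mathrm{Binomial}(k,p)$. Thus $\bbP[d(\widehat\theta_{FMoE},\theta) > C_\alpha\epsilon] \le \bbP(\sum_j Y_j > \alpha k) \le \bbP(\mathrm{Bin}(k,p) \ge \lceil \alpha k\rceil)$. The final step is the Chernoff bound for the binomial upper tail: for $p < \alpha < 1$, $\bbP(\mathrm{Bin}(k,p) \ge \alpha k) \le \exp(-k\,\psi(\alpha,p))$ where $\psi(\alpha,p) = (1-\alpha)\log\tfrac{1-\alpha}{1-p} + \alpha\log\tfrac{\alpha}{p}$ is exactly the binary KL divergence $\mathrm{kl}(\alpha\|p)$; this is the standard relative-entropy Chernoff bound and I would either cite it (e.g.\ from \citet{boucheron2013concentration}) or derive it in one line via the exponential Markov inequality optimized over the tilt parameter.

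I do not expect a serious obstacle here — the theorem is essentially a clean corollary of Lemma~\ref{lem_geo_med}. The one point requiring care is the bookkeeping of the a.s.\ positional assumptions in case (b): I must ensure that the events $\{\widehat\theta_j \in B(\widehat\theta_{FMoE}, D_\kappa/2)\}$ and $\{\widehat\theta_{FMoE}\in B(\theta, D_\kappa/2)\}$ and the existence of $\widehat\theta_{FMoE}$ all hold simultaneously with probability one, so that on the complement of a null set the deterministic implication from Lemma~\ref{lem_geo_med} is valid and the binomial domination is unaffected. A minor secondary point is the integrality issue ($\alpha k$ may not be an integer), which is harmless since $|J| > \alpha k$ forces $|J| \ge \lceil \alpha k\rceil \ge \alpha k$ and the Chernoff bound is monotone in the threshold.
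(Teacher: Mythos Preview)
Your proposal is correct and follows essentially the same approach as the paper's own proof: apply Lemma~\ref{lem_geo_med} with $x^* = \widehat\theta_{FMoE}$, $x_j = \widehat\theta_j$, $z = \theta$, $r = \epsilon$ to obtain the inclusion of the bad event into $\{\sum_j \indicator\{d(\widehat\theta_j,\theta)>\epsilon\} \ge \alpha k\}$, then use stochastic domination by $\mathrm{Bin}(k,p)$ and the Chernoff (relative-entropy) tail bound. The paper handles case~(b) by taking the event $\{\tfrac{\pi}{2}C_\alpha\epsilon < d(\widehat\theta_{FMoE},\theta) \le D_\kappa/2\}$ and noting the upper constraint is vacuous under the a.s.\ assumption, exactly as you outline.
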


We provide the proof in Appendix \ref{appendix_proof_section_robustify}.

\begin{rmk}[$\kappa > 0$ case]\label{remark_boosting_k_positive}
For $\kappa > 0$, a priori concentrations $d(\widehat{\theta}_{FMoE}, \widehat{\theta}_j) \leq D_{\kappa}/2$ and $d(\widehat{\theta}_{FMoE}, \theta) \leq D_{\kappa}/2$ are required. 
The first condition is a common assumption in CAT($\kappa$) spaces \citep{brunel2023concentration}[Theorem 18]. The second condition is new here. 
If one avoids using this condition, one only obtains a conditional form of concentration: if $\widehat{\theta}_j \in B(\widehat{\theta}_{FMoE}, D_{\kappa}/2 - \epsilon)$ almost surely, one can obtain
\begin{align*}
        &\bbP\left[d(\widehat{\theta}_{FMoE}, \theta) > \frac{\pi C_{\alpha} \epsilon}{2} \bigg\vert d(\widehat{\theta}_{FMoE}, \theta) \leq D_{\kappa}/2\right] \\
        &\leq \frac{\exp\left(-k\psi(\alpha, p)\right)}{1-p^k}.
\end{align*}
\end{rmk}
When the context is clear, we will refer to $\widehat{\theta}$ as the `original estimator' throughout the paper. Theorem~\ref{thm_concen_of_med} states that by taking the Fr\'echet median, the original estimator can be boosted to achieve exponential concentration even when only weak, e.g., polynomial, concentration would be expected. Algorithm \ref{alg_moe} outlines the procedure for constructing a Fr\'echet median of estimators (FMoE) based on the original estimator.

\begin{algorithm}
\begin{algorithmic}[1]
\caption{Boosting a preliminary estimator.}\label{alg_moe}
\Require Input data $x_1, \dots, x_{n}$, CAT($\kappa$) space $(\Theta, d)$, the block size $k$.

\State Split the data $x_i$ into $k$ disjoint blocks, with each block consisting of $\floor{n/k}$ data points.
\For{$j \gets 1$ to $k$}                   
    \State $\widehat{\theta}_j \gets$ The original estimator from $j^{th}$ block data points.
\EndFor
\State \Return $\widehat{\theta}_{FMoE} \gets$ Frechet median of $\widehat{\theta}_j$ with respect to the metric $d_{\Theta}$.
\end{algorithmic}
\end{algorithm}

\begin{rmk}[Time complexity of Algorithm~\ref{alg_moe}]
To the best of our knowledge, the time complexity of Algorithm~\ref{alg_moe} in general setting is unknown, as that of computing the Fr\'echet median is unknown beyond Euclidean spaces. That said, 
for the time complexity with respect to $n$ for a
fixed $d$, one can expect the time complexity of FMoE matches 
the time complexity of the original estimator. A heuristic argument proceeds as follows: suppose the time complexities of an original estimator and Fr\'echet median given $n$ elements are $O(n^{\alpha})$ and $O(n^{\beta})$ for some $\alpha, \beta > 0$. Then, given $k$, the time complexity of obtaining the original estimators over $k$ blocks will be $O(n^{\alpha} k^{1-\alpha})$. In sum, the total time complexity will be $O(n^{\alpha} k^{1-\alpha} + k^{\beta})$. Typically, both from theoretical observations and numerical simulations, $k$ is relatively small compared to $n$ (see Sections \ref{section_applications} and \ref{section_implementation_experi}). Therefore, if one assumes $k = O(1)$, $O(n^{\alpha} k^{1-\alpha})$ term dominates as $n$ gets larger, meaning the complexity matches the original estimator's time complexity $O(n^{\alpha})$.
\end{rmk}

We conclude this section by highlighting the favorable properties of our method. First, it can boost any type of estimator. While most research on statistical estimation in general metric spaces has focused on Fr\'echet mean estimation \citep{ahidarcoutrix2018ConvergenceRF, brunel2023concentration, LeGouic2019FastCO, sturm2000NPC, yun2023expoential}, our method also applies to estimators that need not be Fr\'echet means. We illustrate the advantages of this broad applicability in Section \ref{section_applications_beyondmean}.

Second, our method is nearly fully implementable. Given the original estimators, Algorithm \ref{alg_moe} requires only the computation of the Fréchet median of them. This is always feasible in NPC spaces \citep{bacak2014computing}[Algorithm 4.3]. In CAT($\kappa$) spaces with $\kappa > 0$, while no universal algorithm for Fr\'echet median exists, algorithms tailored to specific domains can be utilized \citep{fletcher2009geometricmedian, boria2019generalizedmedian}. This improves upon \citet{yun2023expoential}, which considered the median-of-means estimators in NPC spaces using a tournament-based median but lacked computational tractability.

\section{Statistical applications}\label{section_applications}

This section illustrates how the proposed method boosts some widely used estimators to achieve exponential concentration without sub-Gaussian conditions on the data. While our method is generally applicable, we investigate the boosting of two important estimators as examples: (1) Fr\'echet mean estimators in NPC spaces and (2) the canonical sample covariance estimator.



\subsection{Boosting Fr\'echet means: Fr\'echet median-of-means estimators}\label{section_applications_mom}

As the primary application, we focus on the widely studied problem of estimating the Fr\'echet mean in NPC spaces \citep{brunel2023concentration, LeGouic2019FastCO, sturm2000NPC, yun2023expoential}. This problem has received significant attention in recent years due to the existence guarantee of the Fr\'echet mean and median in these spaces, as discussed in Remark \ref{rmk_frechet_exist_unique}. For this problem, the proposed method becomes a Fr\'echet median-of-means (FMoM). Throughout this section we assume that $(\calX, d)$ is a NPC space with a curvature lower bound $\kappa_{\min}(\calX) \in [-\infty, 0]$ and that $x_i \iid P \in \calP_2(\calX)$ for $i =1, \dots, n$. We denote the Fr\'echet mean and the second moment of $P$ as $x^*$ and $\sigma^2 := \bbE_{y \sim P}\left[d^2(x^*, y)\right]$ respectively.

There are two natural ways to construct a Fr\'echet mean estimator in NPC spaces: (1) Empirical Fr\'echet mean $\widehat{x}_{EM}$ and (2) inductive mean $\widehat{x}_{IM}$. These serve as alternative estimators of the population mean though they differ in some aspects. The explanations of these estimators will be provided in Appendix \ref{appendix_frechet_mean_estimation}. Proposition \ref{prop_expected_error_mean_estimators} shows a weak concentration of these estimators under mild conditions. 

\begin{prop} \label{prop_expected_error_mean_estimators}
    Let $\widehat{x}$ be either $\widehat{x}_{EM}$ or $\widehat{x}_{IM}$. If $\widehat{x} = \widehat{x}_{EM}$, in addition assume $\kappa_{\min}(\calX) > -\infty$. Then,
    \[
    \bbE\left[d^2(\widehat{x}, x^*)\right] \leq \frac{\sigma^2}{n}.
    \]
    Furthermore, for any $\epsilon > 0$
    \[
        \mathbb{P}\left[ d(\widehat{x}, x^*) > \epsilon \right] \leq \frac{\sigma^2}{n \epsilon^2}.
    \]
\end{prop}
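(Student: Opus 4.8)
The plan is to reduce everything to the second-moment bound: applying Markov's inequality to the nonnegative variable $d^2(\widehat{x}, x^*)$ immediately gives $\bbP[d(\widehat x, x^*) > \epsilon] = \bbP[d^2(\widehat x, x^*) > \epsilon^2] \le \bbE[d^2(\widehat x, x^*)]/\epsilon^2 \le \sigma^2/(n\epsilon^2)$, so it suffices to establish $\bbE[d^2(\widehat x, x^*)] \le \sigma^2/n$ for each of the two estimators.

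For the inductive mean I would set $S_1 = x_1$ and, for $k \ge 2$, take $S_k$ to be the point at parameter $1/k$ on the geodesic from $S_{k-1}$ to $x_k$, so that $\widehat x_{IM} = S_n$; put $\calF_k = \sigma(x_1, \dots, x_k)$ and $a_k := \bbE[d^2(x^*, S_k)]$. The idea is to derive a one-step recursion for $a_k$. Applying the CAT(0) comparison inequality $d^2(z, \gamma_{p,q}(t)) \le (1-t)d^2(z,p) + t\, d^2(z,q) - t(1-t)d^2(p,q)$ with $z = x^*$, $p = S_{k-1}$, $q = x_k$, $t = 1/k$, and then taking conditional expectation given $\calF_{k-1}$, I would use (i) independence of $x_k$ from $\calF_{k-1}$, which gives $\bbE[d^2(x^*, x_k)\mid\calF_{k-1}] = \sigma^2$, and (ii) the variance inequality for barycenters in NPC spaces (\citet{sturm2000NPC}) applied to $P$ at the point $S_{k-1}$, namely $\int d^2(S_{k-1}, y)\,dP(y) \ge \sigma^2 + d^2(S_{k-1}, x^*)$ — this is exactly where the negative term $-t(1-t)d^2(p,q)$ is exploited. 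After simplifying the coefficients one obtains $\bbE[d^2(x^*, S_k)\mid\calF_{k-1}] \le \tfrac{(k-1)^2}{k^2} d^2(x^*, S_{k-1}) + \tfrac{\sigma^2}{k^2}$; taking expectations and multiplying by $k^2$ turns this into $k^2 a_k \le (k-1)^2 a_{k-1} + \sigma^2$, and since $a_1 = \sigma^2$ an immediate induction yields $k^2 a_k \le k\sigma^2$, i.e. $a_n \le \sigma^2/n$.

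For the empirical Fréchet mean the argument is an M-estimation argument rather than a recursion, and this is the step I expect to be the main obstacle. The starting point is again the variance inequality, used twice: once for the empirical measure $P_n = \tfrac1n\sum_i \delta_{x_i}$ (whose barycenter is $\widehat x_{EM}$) at the point $x^*$, and once for $P$ at the point $\widehat x_{EM}$. Adding the two and using the optimality of $\widehat x_{EM}$ for $F_n(y) = \tfrac1n\sum_i d^2(y, x_i)$ collapses everything to $2\, d^2(\widehat x_{EM}, x^*) \le \int g\, d(P - P_n)$, where $g(y) = d^2(\widehat x_{EM}, y) - d^2(x^*, y)$. The difficulty is that $g$ depends on the data through $\widehat x_{EM}$, so the right-hand side is not a mean-zero average. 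I would handle this by a localization (peeling) argument: on the event $d(\widehat x_{EM}, x^*) \le R$ the right-hand side is bounded by $\sup_{d(y,x^*)\le R}\big|\int [d^2(y, \cdot) - d^2(x^*,\cdot)]\, d(P - P_n)\big|$, and here the hypothesis $\kappa_{\min}(\calX) > -\infty$ enters: the Alexandrov comparison from below (quadruple inequality) forces $y \mapsto d^2(y, \cdot) - d^2(x^*, \cdot)$ to be "almost affine" with a gradient of size $\asymp d(y, x^*)$, so that the supremum is of order $R$ times the fluctuation $\asymp \sqrt{\sigma^2/n}$ of the empirical mean. Combining $2\, d^2 \lesssim R\sqrt{\sigma^2/n}$ with the self-bounding identity $R = d(\widehat x_{EM}, x^*)$ and optimizing gives $\bbE[d^2(\widehat x_{EM}, x^*)] \le \sigma^2/n$; alternatively one may simply invoke the sharp $\sigma^2/n$ rates for empirical barycenters already available for Alexandrov/Hadamard spaces (\citet{LeGouic2019FastCO}, \citet{schötz2024varianceinequalitiestransformedfrechet}). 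Pinning the constant down to exactly $1$, rather than a curvature-dependent constant, is the delicate point, and is precisely the reason the upper curvature bound (NPC) alone does not suffice and $\kappa_{\min}(\calX) > -\infty$ must be assumed for $\widehat x_{EM}$.
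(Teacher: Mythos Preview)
Your reduction to Markov's inequality is exactly what the paper does for the concentration part, and your inductive-mean recursion is precisely Sturm's proof of \citet{sturm2000NPC}[Theorem 4.7], which the paper simply cites. So on those two points you are correct and aligned with the paper, just more explicit.

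The divergence is on the empirical Fr\'echet mean. The paper does not attempt any argument here at all: it invokes \citet{LeGouic2019FastCO}[Corollary 3.4] as a black box, and that reference already delivers the sharp bound $\bbE[d^2(\widehat{x}_{EM}, x^*)] \le \sigma^2/n$ under the curvature lower bound. Your sketched M-estimation/peeling argument is in the right spirit, but as you yourself note, the route through ``$2\,d^2 \lesssim R\sqrt{\sigma^2/n}$ with $R = d(\widehat x_{EM}, x^*)$'' does not obviously land on the constant $1$; at best it gives a curvature-dependent constant, and the heuristic ``optimizing gives $\sigma^2/n$'' step is not justified as written. Since the statement asserts the exact constant, your primary argument has a gap there. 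Your own fallback --- ``alternatively one may simply invoke the sharp $\sigma^2/n$ rates \dots\ (\citet{LeGouic2019FastCO})'' --- is the actual content of the paper's proof, so you do eventually arrive at the right place, but the M-estimation detour should be dropped or relegated to a remark rather than presented as the main line.
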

\begin{proof}
    For the expected error bound, see \citet{LeGouic2019FastCO}[Corollary 3.4] for $\widehat{x}_{EM}$ case and \citet{sturm2000NPC}[Theorem 4.7] for $\widehat{x}_{IM}$ case.

    The concentration inequality directly follows from Markov inequality.
\end{proof}

On the contrary, achieving the exponetial concentration requires sub-Gaussian type assumptions, which are stronger than the usual sub-Gaussian conditions in Euclidean space; see the discussion in \citet{brunel2023concentration}[Definition 3].

\begin{prop}\citet{brunel2023concentration}[Corollary 11]\label{prop_exp_tail_bound_subgaussian}
    Suppose $P$ satisfies the following sub-Gaussian type assumption, \ie 
    \[
    \sup_{f \in \calF} \bbE_{X \sim P}\left[e^{\lambda(f(X) - \bbE[f(X)])}\right] \leq e^{\frac{\lambda^2 K^2}{2}} \qquad \forall \lambda > 0
    \]
    where $\calF = \{f: \calX \rightarrow \bbR \: \vert \: f \text{ is a $1$-Lipschitz function}\}$. Let $\widehat{x}$ be either $\widehat{x}_{EM}$ or $\widehat{x}_{IM}$. If $\widehat{x} = \widehat{x}_{EM}$, in addition assume $\kappa_{\min}(\calX) > -\infty$. Then,
    \[
    \bbP\left[d(\widehat{x}, x^*) \geq \frac{\sigma}{\sqrt{n}} + K \sqrt{\frac{\log(1/\delta)}{n}}\right] \leq \delta.
    \]
\end{prop}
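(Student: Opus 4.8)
The final statement is \citet{brunel2023concentration}[Corollary 11], so in principle one simply cites it; here is the route I would take to reconstruct the proof. The plan is to split $d(\widehat{x}, x^*)$ into its expectation and its fluctuation around that expectation, controlling the first by Proposition~\ref{prop_expected_error_mean_estimators} and the second by a sub-Gaussian concentration argument. For the first piece, Proposition~\ref{prop_expected_error_mean_estimators} gives $\bbE[d^2(\widehat{x}, x^*)] \le \sigma^2/n$, so Jensen's inequality yields $\bbE[d(\widehat{x}, x^*)] \le \sigma/\sqrt{n}$. It therefore suffices to prove that, writing $\Phi := d(\widehat{x}, x^*)$ as a function of the i.i.d.\ sample $(X_1,\dots,X_n)$, one has a tail bound $\bbP[\Phi \ge \bbE\Phi + u] \le \exp(-c\,n u^2/K^2)$ for an absolute constant $c$; taking $u \asymp K\sqrt{\log(1/\delta)/n}$ and combining with the expectation bound then gives the claim, up to the precise constant fixed by the sub-Gaussian normalization in the hypothesis.

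For the fluctuation bound I would run a Doob-martingale (bounded-differences) argument, but with \emph{unbounded} differences controlled in a sub-Gaussian sense rather than uniformly. The geometric input is a stability estimate for the estimator: replacing the $i$-th datum $X_i$ by an independent copy $X_i'$, with all other data fixed, moves $\widehat{x}$ — hence $\Phi$ — by at most $\tfrac1n\, d(X_i, X_i')$. For the inductive mean $\widehat{x}_{IM}$ this is elementary: $\widehat{x}_{IM}$ is built from iterated geodesic interpolations, geodesic interpolation is $1$-Lipschitz in each endpoint in NPC spaces, and tracking the contraction factors $\tfrac1i,\tfrac{i}{i+1},\dots,\tfrac{n-1}{n}$ (which telescope to $\tfrac1n$) shows the influence of $X_i$ on $\widehat{x}_{IM}$ is at most $\tfrac1n d(X_i,X_i')$ for every $i$. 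Consequently the increments $D_i$ of the Doob martingale $M_i = \bbE[\Phi \mid X_1,\dots,X_i]$ equal $h_i(X_i) - \bbE h_i(X_i)$ for a function $h_i$ (with $X_1,\dots,X_{i-1}$ frozen) that is $\tfrac1n$-Lipschitz; since $n h_i$ is $1$-Lipschitz, the hypothesis makes $D_i$ a $(K/n)$-sub-Gaussian increment conditionally on the past. Summing these martingale increments in the usual way shows $\Phi - \bbE\Phi$ is $(K/\sqrt{n})$-sub-Gaussian, which is exactly the tail bound required. Equivalently, one may phrase this step as tensorization of a transportation/Gaussian-concentration inequality: the hypothesis says $P$ has sub-Gaussian $1$-Lipschitz functionals, this property tensorizes to $P^{\otimes n}$ on $\calX^n$ under the $\ell^2$-combination of the metrics, and the stability estimate makes $\Phi$ a $(1/\sqrt{n})$-Lipschitz function on that product.

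The main obstacle is the stability estimate for the \emph{empirical} Fréchet mean $\widehat{x}_{EM}$. The naive route via $W_2$-Lipschitzness of the barycenter map only gives that changing one datum moves $\widehat{x}_{EM}$ by $O(n^{-1/2})\,d(X_i,X_i')$, which is too lossy: summing $n$ such increments would yield only a $K$-sub-Gaussian, not a $(K/\sqrt{n})$-sub-Gaussian, fluctuation. To recover the sharp $1/n$ influence one must combine the variance (strong-convexity) inequality for barycenters in Hadamard spaces with a quantitative description of the geometry near $x^*$ — for instance, bounding $d(\widehat{x}_{EM}, x^*)$ by the tangent-cone gradient of the empirical Fréchet functional at $x^*$, which is an $n$-average of terms whose directional components are $1$-Lipschitz functionals of the individual $X_i$. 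This is precisely where the extra assumption $\kappa_{\min}(\calX) > -\infty$ enters: a curvature lower bound is what converts the first-order/variance inequality into a genuine stability bound with the correct constant, whereas $\widehat{x}_{IM}$, being defined algorithmically, needs no such assumption — consistent with the statement. Once the stability estimate is in place, the remaining steps (Jensen, the sub-Gaussian Azuma bound, and optimizing over $u$) are routine.
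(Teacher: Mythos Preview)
The paper does not supply its own proof of this proposition: it is stated purely as a citation of \citet{brunel2023concentration}[Corollary~11], with no argument given in the text or the appendix. You correctly anticipate this in your first sentence, and there is nothing further to compare against.

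As for the reconstruction itself, your outline for the inductive mean $\widehat{x}_{IM}$ is sound. The decomposition into $\bbE[d(\widehat{x},x^*)] \le \sigma/\sqrt{n}$ (via Proposition~\ref{prop_expected_error_mean_estimators} and Jensen) plus a fluctuation term is standard, and the $1/n$-Lipschitz influence of each $X_i$ on $\widehat{x}_{IM}$ follows exactly from the telescoping contraction you describe, using convexity of the metric in NPC spaces. The Doob-martingale step then goes through because each conditional increment is the centering of a $(1/n)$-Lipschitz function of a fresh $X_i\sim P$, to which the hypothesis applies directly; summing $n$ increments with variance proxy $(K/n)^2$ gives the $K/\sqrt{n}$ sub-Gaussian fluctuation (modulo a harmless $\sqrt{2}$ in the final constant). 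For $\widehat{x}_{EM}$ you are right that the naive $W_2$-stability is off by $\sqrt{n}$ and that recovering the sharp influence bound requires the variance/strong-convexity inequality for barycenters together with a curvature lower bound; this is indeed where \citet{brunel2023concentration} invokes $\kappa_{\min}(\calX)>-\infty$, and you have correctly located the only nontrivial step.
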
 

Now, we turn our attention to FMoM. Theorem \ref{thm_empirical_mean_mom} shows a FMoM achieves the exponential concentration without the sub-Gaussian assumptions, just as in Hilbert space.

\begin{thm}\label{thm_empirical_mean_mom}
    Fix $\delta > 0$. Let $\widehat{x}_{FMoM}$ be a Fr\'echet median-of-means of $\widehat{x}_j$'s where $\widehat{x}$ is either $\widehat{x}_{EM}$ or $\widehat{x}_{IM}$. Set $k = \floor{\log(1/\delta) / \psi(7/18, 1/10)} + 1$.  If $\widehat{x} = \widehat{x}_{EM}$, in addition assume $\kappa_{\min}(\calX) > -\infty$. Then
        \[
        \bbP\left[d(\widehat{x}_{FMoM}, x^*) \geq 11\sqrt{\frac{\sigma^2 \log(1.4/\delta)}{n}} \right] \leq \delta.
        \]
\end{thm}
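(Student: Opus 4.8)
The plan is to apply Theorem~\ref{thm_concen_of_med}(a) with a carefully chosen tolerance $\epsilon$ coming from the weak concentration in Proposition~\ref{prop_expected_error_mean_estimators}, and then to simplify the resulting constants. First I would note that since $(\calX,d)$ is NPC, it is a CAT($\kappa$) space with $\kappa = 0 \leq 0$, so part (a) of Theorem~\ref{thm_concen_of_med} applies with no curvature-side conditions (beyond $\kappa_{\min} > -\infty$ when $\widehat{x} = \widehat{x}_{EM}$, which is already assumed). The estimators $\widehat{x}_j$ are computed on disjoint blocks of size $\floor{n/k}$, hence independent, so the hypothesis of Theorem~\ref{thm_concen_of_med} on independence is met.

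Next I would pin down the parameters. Take $\alpha = 7/18$ and $p = 1/10$, matching the $\psi(7/18,1/10)$ in the statement; note $p < \alpha$ and $\alpha \in (0,1/2)$ as required, and $C_\alpha = (1-\alpha)(1-2\alpha)^{-1/2} = (11/18)(1 - 7/9)^{-1/2} = (11/18)\cdot 3/\sqrt{2} = 11/(6\sqrt2)$. By Proposition~\ref{prop_expected_error_mean_estimators}, each block estimator (built on $\floor{n/k}$ points) satisfies $\bbP(d(\widehat{x}_j, x^*) > \epsilon) \leq \sigma^2/(\floor{n/k}\,\epsilon^2)$. I would choose $\epsilon$ so that this bound equals $p = 1/10$, i.e. $\epsilon = \sqrt{10\sigma^2/\floor{n/k}}$. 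Then Theorem~\ref{thm_concen_of_med}(a) gives
\[
\bbP\left[d(\widehat{x}_{FMoM}, x^*) > C_\alpha \epsilon\right] \leq \exp(-k\,\psi(7/18,1/10)).
\]
With $k = \floor{\log(1/\delta)/\psi(7/18,1/10)} + 1 > \log(1/\delta)/\psi$, the right-hand side is at most $\exp(-\log(1/\delta)) = \delta$.

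It then remains to bound $C_\alpha \epsilon = \dfrac{11}{6\sqrt2}\sqrt{\dfrac{10\sigma^2}{\floor{n/k}}}$ by $11\sqrt{\sigma^2\log(1.4/\delta)/n}$. I would use $\floor{n/k} \geq n/k - 1 \geq n/(2k)$ for $n \geq 2k$ (or more simply $\floor{n/k} \geq n/(2k)$ whenever $k \leq n/2$), so that $C_\alpha\epsilon \leq \dfrac{11}{6\sqrt2}\sqrt{\dfrac{20\,k\,\sigma^2}{n}} = \dfrac{11}{6}\sqrt{\dfrac{10\,k\,\sigma^2}{n}}$. Substituting the bound $k \leq \log(1/\delta)/\psi(7/18,1/10) + 1$ and using a numerical lower bound on $\psi(7/18,1/10)$ (it is a fixed positive constant, roughly $0.15$–$0.2$), I would absorb everything into the factor $11$ and the replacement of $\log(1/\delta)$ by $\log(1.4/\delta)$: the extra additive $+1$ in $k$ and the $\psi$ denominator get swallowed by the constant $\log 1.4$ and the slack between $11/6\cdot\sqrt{10}$ and $11$.

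The main obstacle is the constant-chasing in the last step: one must verify that the specific numerical choices $\alpha = 7/18$, $p = 1/10$ make $\dfrac{11}{6}\sqrt{10\,k\,\sigma^2/n}$ actually fit under $11\sqrt{\sigma^2\log(1.4/\delta)/n}$ for all $\delta$ in the relevant range, i.e. that $\dfrac{10}{36}\,k \leq \log(1.4/\delta)$ given $k \leq \log(1/\delta)/\psi + 1$. This reduces to checking $\dfrac{10}{36}\big(\log(1/\delta)/\psi + 1\big) \leq \log(1/\delta) + \log 1.4$, which holds provided $\psi(7/18,1/10) \geq 10/36$ up to the additive correction — so the real work is a careful numerical evaluation of $\psi(7/18,1/10)$ and confirming the inequality is tight enough. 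I would also handle the mild regime restriction ($n$ large enough relative to $k$, equivalently relative to $\log(1/\delta)$) either by stating it as an implicit assumption or by noting the bound is vacuous otherwise since $d(\widehat{x}_{FMoM},x^*)$ would exceed the diameter-scale trivially. Everything else — independence, applicability of the CAT($0$) case, and the inversion of the weak concentration — is routine.
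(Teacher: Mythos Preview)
Your approach is essentially identical to the paper's: bound $\floor{n/k}^{-1}$ by $2k/n$, set $\epsilon$ so that the Chebyshev bound from Proposition~\ref{prop_expected_error_mean_estimators} equals $p=1/10$, apply Theorem~\ref{thm_concen_of_med}(a) with $\alpha=7/18$, and then do the constant-chasing via $k\psi \leq \psi + \log(1/\delta)$. One small correction: your rough estimate $\psi(7/18,1/10)\approx 0.15$--$0.2$ is too low --- a direct computation gives $\psi(7/18,1/10)\approx 0.29$, which is exactly what makes your key inequality $\tfrac{10}{36}\big(\log(1/\delta)/\psi+1\big)\leq \log(1/\delta)+\log 1.4$ go through (it needs $\psi\geq 10/36\approx 0.278$ and $10/36\leq \log 1.4\approx 0.336$, both of which hold).
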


The proof is provided in Appendix \ref{appendix_proof_section_applications}.


\begin{rmk}[Fr\'echet mean estimation for $\kappa > 0$]
In the Fr\'echet mean estimation problem in CAT($\kappa$) with $\kappa > 0$, the bounded support condition, as mentioned in Remark \ref{rmk_frechet_exist_unique}, is necessary to ensure the unique existence of the Fr\'echet mean. One might wonder whether this assumption automatically implies exponential concentration of the empirical Fr\'echet mean, as in Euclidean space. However, even in this case, proper finiteness of the metric entropy must be imposed to guarantee exponential concentration \citep{brunel2023concentration}[Theorem 18]. Conversely, in spaces with $\kappa_{\min} \geq 0$ that satisfy the so-called extendible geodesics condition, polynomial concentration is achieved \citep{LeGouic2019FastCO}[Theorem 3.3, 4.2]. Notably, this condition can be met even if the space lacks a strong metric entropy bound \citep{ahidarcoutrix2018ConvergenceRF, LeGouic2019FastCO}[Example 2.6, Corollary 4.4].
\end{rmk}


\subsection{Boosting a sample covariance estimator}\label{section_applications_beyondmean}

As mentioned in Section \ref{section_robustify}, the main advantage of this work lies in its extendability to problems beyond the Fr\'echet mean estimation. The proposed method is applicable for inducing a robust estimator whenever (1) a parameter space can endow a CAT($\kappa$) structure, and (2) the original estimator achieves weak concentration (e.g. polynomial). In this section, its application to the estimation of the sample covariance matrix is provided.

Unlike the Fr\'echet mean problem discussed in Section~\ref{section_applications_mom}, original estimators may not necessarily rely on the geometry of CAT($\kappa$) space. The main difficulty in this section lies in obtaining weak concentration of the original estimators with respect to the metric of a CAT($\kappa$) space. Fortunately, an estimator built under Euclidean geometry achieves the polynomial concentration with respect to the metrics of a CAT($\kappa$) space for this sample covariance problem. Once polynomial concentration for the original estimator is established, the procedure goes similarly to Theorem \ref{thm_empirical_mean_mom}, resulting in exponential concentration for a FMoE estimator. 





\subsubsection{Geometry of symmetric positive definite matrices}

Symmetric positive definite (SPD) matrices arise in many fields, hence their estimation and concentration are an important issue. While there are several geometries in matrix spaces, two metrics are specifically tailored to SPD matrices: affine invariant metric and Bures-Wasserstein metric.

First, the affine invariant metric is defined as follows:
\[
    d_{AI}(A,B) := \|\log A^{-1/2} B A^{-1/2}\|_{F}.
\]
This metric coincides with the Fisher-Rao metric between multivariate Gaussian distributions with fixed mean and covariance matrices $A$ and $B$ \citep{nielsen2023asimpleapproximation}. Additionally, the Fr\'echet mean of SPD matrices with respect to $d_{AI}$ coincides with the geometric mean and plays an important role in diffusion tensor imaging \citep{fillard2005extrapolation}. 

The Bures-Wasserstein metric is defined as follows:
\[
d_{BW}^2(A,B) := \tr(A) + \tr(B) -2 \tr(A^{1/2}BA^{1/2})^{1/2}.
\]
This metric arises naturally in the fields of quantum information and optimal transport. Particularly, this metric is a Wasserstein distance between two multivariate Gaussian distributions with fixed mean and covariance matrices A and B \citep{bhatia2019bureswasserstein}.

While these two metrics both inherit the SPD constraint naturally, their geometries are quite different. The metric space $(SPD, d_{AI})$ forms a NPC space \citep{bhatia2006riemannian}[Proposition 5]. In contrast, $(SPD, d_{BW})$ forms a non-negative curvature spaces and is not in fact a CAT($\kappa$) space for any $\kappa > 0$ \citep{takatsu2009wassersteingeometryspacegaussian}[Theorem 1.1]. However, when restricted to the set of SPD matrices whose smallest eigenvalue is lower bounded by $\sqrt{3/(2\kappa)}$, it becomes a CAT($\kappa$) space \citep{massart2019curvature}[Proposition 2]. 


\subsubsection{Concentration analysis}

One difficulty in the sample covariance matrix estimation problem lies in the SPD matrix constraint. Many analyses of covariance matrices utilizing matrix norms may potentially violate this constraint. For example, it is non-trivial whether taking a matrix norm Fr\'echet median of SPD matrices satisfies the SPD matrix constraint.

Conversely, since $(SPD, d_{AI})$ is a NPC space, the Fr\'echet median of SPD matrices with respect to $d_{AI}$ resides within the SPD space as discussed in Section \ref{section_frechet_mean_median}. For $(SPD, d_{BW})$ one requires additional assumptions that the eigenvalues of matrices should be lower bound by $\sqrt{3/(2\kappa)}$ and matrices are supported within a ball of radius smaller than $D_{\kappa}/2$. Under these additional conditions, one can also guarantee that their Fr\'echet median with respect to $d_{BW}$ is SPD, maintaining the eigenvalue lower bound. In this regard, our method produces an estimator that concentrates exponentially with respect to the chosen metric while satisfying the SPD constraint. We set the original estimator as the canonical sample covariance matrix $\widehat{\Sigma} = \sum_{i=1}^{n} X_i X_i^{T}/n$ (assuming the mean being 0 for simplicity). However, as noted in Section \ref{section_robustify}, our approach is applicable to any covariance estimator that exhibits weak concentration with respect to the chosen metric; e.g., the Fréchet mean of $X_i X_i^T$'s, which corresponds to FMoM discussed in Section \ref{section_applications_mom}.


The following proposition shows the weak concentration of sample covariance matrix estimator w.r.t. both $d_{AI}$ and $d_{BW}$.

\begin{prop}[Polynomial tail bound for covariance matrix estimator]\label{prop_tail_of_cov_matrix}
    Let $X_i \iid P \in \calP_4(\bbR^d)$ a distribution with mean 0 and covariance matrix $\Sigma$ with a fixed dimension $d$. Let $\widehat{\Sigma} = \sum_i X_i X_i^T / n$ be a sample covariance estimator. Then, writing $\lambda_{\min}$ the smallest eigenvalue of $\Sigma$,
    \[
    \bbP\left[d_{AI}\left(\widehat{\Sigma}, \Sigma \right) \geq \epsilon \right] \leq \frac{C d^4}{n \lambda_{\min}^2\left(1 - \exp\left(-\frac{\epsilon}{\sqrt{d}}\right)\right)^2}
    \]
    for some constant $C > 0$ only depends on the moments of $P$.

    For $d_{BW}$, in addition assume both $\widehat{\Sigma}$ and $\Sigma$ have the eigenvalue lower bound by $\lambda_0 > 0$. Then
    \[
    \bbP\left[d_{BW}\left(\widehat{\Sigma}, \Sigma \right) \geq \epsilon \right] \leq \frac{C d^4}{4 n \lambda_0 \epsilon^2}
    \]
    with the same $C$ in the above.
\end{prop}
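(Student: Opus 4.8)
The plan is to reduce the probabilistic statement to a single second‑moment bound on the Frobenius deviation $\norm{\widehat{\Sigma}-\Sigma}_F$, and then convert Frobenius distance into $d_{AI}$ and $d_{BW}$ via two deterministic matrix inequalities. First I would establish the Frobenius concentration: since $\widehat{\Sigma}-\Sigma=\frac1n\sum_{i=1}^n(X_iX_i^T-\Sigma)$ is an average of i.i.d.\ mean‑zero matrices, $\bbE\norm{\widehat{\Sigma}-\Sigma}_F^2=\frac1n\bbE\norm{X_1X_1^T-\Sigma}_F^2=\frac1n\bigl(\bbE\norm{X_1}^4-\norm{\Sigma}_F^2\bigr)\le\frac1n\bbE\norm{X_1}^4$, and $\bbE\norm{X_1}^4$ is finite and bounded by a constant $C$ depending only on the coordinatewise fourth moments of $P$ times a fixed power of $d$ (a crude count gives $d^4$); this is where $P\in\calP_4(\bbR^d)$ enters. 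Chebyshev's inequality then yields $\bbP[\norm{\widehat{\Sigma}-\Sigma}_F\ge t]\le Cd^4/(nt^2)$ for all $t>0$, and both displayed bounds follow by choosing $t$ appropriately.

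For $d_{AI}$, set $M:=\Sigma^{-1/2}\widehat{\Sigma}\Sigma^{-1/2}$ with eigenvalues $\mu_1,\dots,\mu_d>0$ (with the conventions $\log\mu_i=-\infty$, $d_{AI}=\infty$ when $\widehat{\Sigma}$ is singular). If $d_{AI}(\widehat{\Sigma},\Sigma)=\norm{\log M}_F=\bigl(\sum_i(\log\mu_i)^2\bigr)^{1/2}\ge\epsilon$, then $\abs{\log\mu_{i^\star}}\ge\epsilon/\sqrt d$ for some index $i^\star$, and the elementary scalar inequality $\abs{\mu-1}\ge 1-e^{-\abs{\log\mu}}$ (valid for every $\mu>0$, checking $\mu\ge1$ and $0<\mu<1$ separately) gives $\norm{M-I}_F\ge\abs{\mu_{i^\star}-1}\ge 1-e^{-\epsilon/\sqrt d}$. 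Since $\widehat{\Sigma}-\Sigma=\Sigma^{1/2}(M-I)\Sigma^{1/2}$ and $\norm{AXB}_F\ge\sigma_{\min}(A)\sigma_{\min}(B)\norm{X}_F$, we obtain $\norm{\widehat{\Sigma}-\Sigma}_F\ge\lambda_{\min}\norm{M-I}_F\ge\lambda_{\min}\bigl(1-e^{-\epsilon/\sqrt d}\bigr)$; the containment $\{d_{AI}\ge\epsilon\}\subseteq\{\norm{\widehat{\Sigma}-\Sigma}_F\ge\lambda_{\min}(1-e^{-\epsilon/\sqrt d})\}$ also holds when $\widehat{\Sigma}$ is singular because then $\norm{M-I}_F\ge1$. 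Taking $t=\lambda_{\min}(1-e^{-\epsilon/\sqrt d})$ in the Frobenius bound gives the first claim.

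For $d_{BW}$, I would first show the general inequality $d_{BW}(A,B)\le\norm{A^{1/2}-B^{1/2}}_F$: expanding $\norm{A^{1/2}-B^{1/2}}_F^2=\tr A+\tr B-2\tr[A^{1/2}B^{1/2}]$ and comparing with the definition of $d_{BW}^2$, it suffices that $\tr[(A^{1/2}BA^{1/2})^{1/2}]\ge\tr[A^{1/2}B^{1/2}]$, which holds because the left side equals the nuclear norm $\norm{A^{1/2}B^{1/2}}_{\ast}$ (its singular values are the square roots of the eigenvalues of $A^{1/2}BA^{1/2}$), while $\tr[A^{1/2}B^{1/2}]\le\norm{A^{1/2}B^{1/2}}_{\ast}$. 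Next, when $A,B\succeq\lambda_0 I$, the matrix square root is Frobenius‑Lipschitz with constant $(2\sqrt{\lambda_0})^{-1}$: writing $P=A^{1/2}$, $Q=B^{1/2}\succeq\sqrt{\lambda_0}I$, the Sylvester operator $\calL(X):=PX+XQ$ is self‑adjoint for the Frobenius inner product, satisfies $\calL(P-Q)=P^2-Q^2=A-B$, and has all eigenvalues $\ge2\sqrt{\lambda_0}$, so $\norm{P-Q}_F=\norm{\calL^{-1}(A-B)}_F\le(2\sqrt{\lambda_0})^{-1}\norm{A-B}_F$. Combining, $d_{BW}(\widehat{\Sigma},\Sigma)\le(2\sqrt{\lambda_0})^{-1}\norm{\widehat{\Sigma}-\Sigma}_F$, and taking $t=2\sqrt{\lambda_0}\,\epsilon$ in the Frobenius bound gives the second claim with the same $C$.

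The probabilistic input is routine. The main work is the two deterministic metric comparisons — in particular verifying the scalar inequality $\abs{\mu-1}\ge1-e^{-\abs{\log\mu}}$, correctly tracking the $\lambda_{\min}$ and $\lambda_0$ factors through the $\norm{AXB}_F$ and Sylvester estimates, and confirming that the $d_{AI}$ reduction survives the (probability‑zero but still present) event $\widehat{\Sigma}\not\succ0$. A secondary point is arranging that the constant $C$ is identical in the two displays, which is immediate since both reductions feed off the same estimate $\bbE\norm{\widehat{\Sigma}-\Sigma}_F^2\le Cd^4/n$.
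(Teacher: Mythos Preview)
Your proposal is correct and follows essentially the same strategy as the paper: reduce both metrics to the Frobenius deviation $\norm{\widehat{\Sigma}-\Sigma}_F$ via the deterministic inequalities $\{d_{AI}\ge\epsilon\}\subseteq\{\norm{\widehat{\Sigma}-\Sigma}_F\ge\lambda_{\min}(1-e^{-\epsilon/\sqrt d})\}$ and $d_{BW}\le(2\sqrt{\lambda_0})^{-1}\norm{\widehat{\Sigma}-\Sigma}_F$, then control the Frobenius tail using the fourth‑moment assumption. The only differences are cosmetic: you apply Chebyshev directly to $\norm{\widehat{\Sigma}-\Sigma}_F^2$ whereas the paper uses an entrywise union bound before Markov, and you prove the two $d_{BW}$ inequalities from scratch (nuclear‑norm and Sylvester arguments) while the paper cites \citet{bhatia2019bureswasserstein} and \citet{ando1980inequality} for them.
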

We provide a proof in Appendix \ref{appendix_proof_section_applications}.

\begin{rmk}\label{rmk_eigenvalue_lower_bound_concentration}
    \begin{enumerate}
        \item []
        \item The above bound may not be optimal with respect to the dimension $d$. We do not pursue obtaining the optimal dimension bound, as our main goal is to verify the polynomial concentration with respect to $n$.
        \item For the $d_{BW}$ bound, the eigenvalue lower bound assumption on $\widehat{\Sigma}$ may not sound plausible at glance, as $\lambda_{\min}(\widehat{\Sigma})$ is a random quantity. However, its value will be in fact concentrated in $B(\lambda_{\min}(\Sigma), C/\sqrt{n})$ for some universal constant $C > 0$ whenever $P$ has a finite fourth moment; see Appendix \ref{appendix_proof_section_applications}. Accordingly, it is not too harmful to regard $\lambda_0 = \lambda_{\min}(\Sigma)/2$ in practical applications.
    \end{enumerate}
\end{rmk}

Given weak concentration of $\widehat{\Sigma}$, we can proceed with the same technique in Theorem \ref{thm_empirical_mean_mom} to obtain the exponential concentration rate of $\widehat{\Sigma}_{FMoE}$. This yields the following result:

\begin{thm}[Exponential concentration of median of sample covariance matrices]\label{thm_exp_conc_cov_mat}
    Under the same setting in Proposition~\ref{prop_tail_of_cov_matrix}, we set $\widehat{\Sigma}_{FMoE}$ as in Algorithm~\ref{alg_moe} with the original estimator being a sample covariance matrix and the metric $d$ being either $d_{AI}$ or $d_{BW}$. Set $k = \floor{\log(1/\delta)/\psi(0.4, 0.1)} + 1$.
    \begin{compactitem}
    \item[(a)] For $d = d_{AI}$,  whenever $n \geq 2 k C d^4 / \lambda_{\min}^2$, we have
    \begin{align*}
    \bbP &\bigg[ d_{AI}(\widehat{\Sigma}_{FMoE}, \Sigma) \geq \\
    &- 1.3 \sqrt{d} \log\left(1 - \frac{9 d^2}{\lambda_{\min}} \sqrt{\frac{C \log(1.4/\delta)}{n}}\right) \bigg] \leq \delta.
    \end{align*}
    \item[(b)] For $d = d_{BW}$, again assume both $\widehat{\Sigma}$ and $\Sigma$ have the eigenvalue lower bound by $\lambda_{0} > 0$. In addition assume conditions in Theorem \ref{thm_concen_of_med}(b) holds with $\kappa = 3/(2\lambda_{0}^2)$, $\theta = \Sigma$, and $\widehat{\theta}_j = \widehat{\Sigma}_j$. Then, whenever $n > 6 \lambda_0 k C d^4$, we have
    \begin{align*}
        \bbP &\left[d_{BW}(\widehat{\Sigma}_{FMoE}, \Sigma) > 12d^2 \sqrt{\frac{C \log(1.4/\delta)}{2 n \lambda_{0}}} \right] \leq \delta.
    \end{align*}
    \end{compactitem}
\end{thm}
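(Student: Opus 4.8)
The plan is to deduce both parts from the boosting bound, Theorem~\ref{thm_concen_of_med}, by supplying it, block by block, with the polynomial tail bounds of Proposition~\ref{prop_tail_of_cov_matrix}. Fix $\alpha = 0.4$ and $p = 0.1$ once and for all, so that $0 < p < \alpha < 1/2$ and $\psi(0.4,0.1) > 0$; the prescribed $k = \floor{\log(1/\delta)/\psi(0.4,0.1)} + 1$ then satisfies $k\,\psi(0.4,0.1) \ge \log(1/\delta)$, hence $\exp(-k\,\psi(\alpha,p)) \le \delta$, which is precisely the target tail level. Each of the $k$ blocks carries $m := \floor{n/k}$ i.i.d.\ samples, and the stated sample-size hypotheses give $m \ge n/(2k)$; since $\widehat{\Sigma}_j$ is the ordinary sample covariance of those $m$ points, Proposition~\ref{prop_tail_of_cov_matrix} applies to each block with $n$ replaced by $m$. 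It then remains, in each case, to (i) choose a per-block tolerance $\epsilon$ for which the per-block failure probability is at most $p$, (ii) verify the hypotheses of the relevant part of Theorem~\ref{thm_concen_of_med}, and (iii) show that the radius it outputs ($C_\alpha\epsilon$ when $\kappa\le 0$, $\tfrac{\pi}{2}C_\alpha\epsilon$ when $\kappa>0$) is dominated by the radius claimed in the theorem after the value of $k$ is substituted. In the degenerate regime where the logarithm arguments appearing in part~(a) become non-positive, the claimed radius is $+\infty$ and the statement is vacuous, so we may work in the complementary regime.

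For part~(a): $(SPD, d_{AI})$ is a complete NPC space, so the Fr\'echet median stays in $SPD$ and Theorem~\ref{thm_concen_of_med}(a) applies with $C_\alpha = 0.6/\sqrt{0.2} \approx 1.342$. Setting the first tail bound of Proposition~\ref{prop_tail_of_cov_matrix} (with $m$ in place of $n$) equal to $p = 0.1$ and solving for $\epsilon$ shows that any $\epsilon \ge -\sqrt{d}\,\log\bigl(1 - \tfrac{d^2}{\lambda_{\min}}\sqrt{10C/m}\bigr)$ is admissible, the condition $\tfrac{d^2}{\lambda_{\min}}\sqrt{10C/m} < 1$ being the one supplied by the hypothesis $n \ge 2kCd^4/\lambda_{\min}^2$. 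Plugging in $m \ge n/(2k)$ and $k \le \log(1/\delta)/\psi(0.4,0.1) + 1$, and using concavity of $s \mapsto s^{\beta}$ with $\beta = 1.3/C_\alpha < 1$ to absorb the factor $C_\alpha\approx 1.342$ into the displayed constant $1.3$, one checks the numerical inequality $C_\alpha\epsilon \le -1.3\sqrt{d}\,\log\bigl(1 - \tfrac{9d^2}{\lambda_{\min}}\sqrt{C\log(1.4/\delta)/n}\bigr)$ for the minimal admissible $\epsilon$; Theorem~\ref{thm_concen_of_med}(a) then gives the stated bound.

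For part~(b): the ambient space is the set of SPD matrices with smallest eigenvalue $\ge \lambda_0$ under $d_{BW}$, which is CAT($\kappa$) with $\kappa = 3/(2\lambda_0^2)$, so $D_\kappa = \pi/\sqrt{\kappa}$; the a priori conditions — existence of $\widehat{\Sigma}_{FMoE}$, containment of every $\widehat{\Sigma}_j$ and of $\widehat{\Sigma}_{FMoE}$ in the respective $D_\kappa/2$-balls, and the $\lambda_0$-eigenvalue lower bound on each $\widehat{\Sigma}_j$ needed by the $d_{BW}$ bound of Proposition~\ref{prop_tail_of_cov_matrix} — are exactly the hypotheses of Theorem~\ref{thm_concen_of_med}(b), which are assumed. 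Equating the $d_{BW}$ tail bound to $p = 0.1$ shows any $\epsilon \ge d^2\sqrt{5C/(2m\lambda_0)}$ is admissible; the hypothesis $n > 6\lambda_0 kCd^4$ together with $m \ge n/(2k)$ keeps this threshold inside the window $(0, D_\kappa/(\pi C_\alpha))$ required by Theorem~\ref{thm_concen_of_med}(b). That theorem then boosts to failure probability $\le \exp(-k\,\psi(\alpha,p)) \le \delta$ at radius $\tfrac{\pi}{2}C_\alpha\epsilon$; substituting $m \ge n/(2k)$ and the value of $k$ and bounding $\tfrac{\pi}{2}C_\alpha\,d^2\sqrt{5Ck/(n\lambda_0)}$ against $12 d^2\sqrt{C\log(1.4/\delta)/(2n\lambda_0)}$ through the same $\psi(0.4,0.1)$ estimate finishes the proof.

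The step I expect to be the main obstacle is not conceptual but the constant-chasing in~(iii), which is sharpest in part~(b). There one must simultaneously keep the per-block tolerance $\epsilon$ small enough to respect the curvature-imposed admissibility constraint $\epsilon < D_\kappa/(\pi C_\alpha)$ — a constraint coupling $\epsilon$, $\lambda_0$ and $n$ — and keep the inflated radius $\tfrac{\pi}{2}C_\alpha\epsilon$ below the claimed expression once the particular $k = \floor{\log(1/\delta)/\psi(0.4,0.1)} + 1$ is inserted; the slack available is only $O(1)$, so the key inequalities, of the type $\sqrt{k} \le (\mathrm{const})\sqrt{\log(1.4/\delta)}$, must be verified with care, notably in the low-confidence regime $\delta \uparrow 1$ where the ``$+1$'' in the definition of $k$ is the binding term. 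A related, minor nuisance is the discrepancy between $m = \floor{n/k}$ and $n/k$, which is what forces the slightly loose constants ($9$ in part~(a), $12$ in part~(b)) in place of their idealized values.
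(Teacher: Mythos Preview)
Your proposal is correct and follows essentially the same route as the paper: fix $\alpha=0.4$, $p=0.1$, apply Proposition~\ref{prop_tail_of_cov_matrix} to each block of size $\lfloor n/k\rfloor\ge n/(2k)$ to extract a tolerance $\epsilon$ achieving per-block failure probability $p$, feed this into Theorem~\ref{thm_concen_of_med}, and then substitute $k$ and simplify the constants. The only cosmetic difference is your concavity trick for absorbing $C_\alpha\approx 1.342$ into the displayed $1.3$ in part~(a); the paper instead keeps $C_\alpha$ outside and compensates entirely through the inner constant $9$, so you may find that step simpler than you anticipate.
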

We provide a proof in Appendix \ref{appendix_proof_section_applications}.

\begin{rmk}
\begin{enumerate}
    \item []
    \item Note that $d$ can vary in Proposition~\ref{prop_tail_of_cov_matrix}; as long as $d^4 = o(n/\log n)$ Proposition~\ref{prop_tail_of_cov_matrix} ensures polynomial concentration. Therefore, our method still achieves the exponential concentration in such high dimensional settings. 
    \item Our estimator is comparable to estimators studied in \citet{abdalla2023covarianceestimationoptimaldimensionfree, oliveira2024improvedcovarianceestimationoptimal}, while their estimators require some additional (still weak) assumptions. For example, $d_{BW}$ bound of the estimator in \citet{oliveira2024improvedcovarianceestimationoptimal} $\widetilde{\Sigma}$ (considering 0 contamination) will be
    \[
    \mathbb{P}\left(d_{BW}(\widetilde{\Sigma}, \Sigma) \geq \frac{C \lambda_{\max} \sqrt{d}}{2} \sqrt{\frac{r(\Sigma) + \log(1/\delta)}{n \lambda_0}}\right) \leq \delta.
    \]
    Here, $r(\Sigma)$ denotes the effective rank of $\Sigma$. This bound is more desirable with respect to dimension $d$, but has the additional factor $\lambda_{\max}$, and the additive term $\frac{C \lambda_{\max} \sqrt{\lambda_{\max} r(\Sigma)}}{2\sqrt{n \lambda_0}}$. 
    Furthermore, note the dimension dependencies of our bounds come from Proposition \ref{prop_tail_of_cov_matrix}. If one uses a different original estimator, or obtains the tighter dimension bounds of $\widehat{\Sigma}$, $\widehat{\Sigma}_{FMoE}$ can exhibit a tighter concentration in terms of the dimension. A similar analysis can be conducted for $d_{AI}$ metric bound.
\end{enumerate}
\end{rmk}

The result of Theorem \ref{thm_exp_conc_cov_mat} shows that we can obtain the estimator with respect to $d_{AI}$ and $d_{BW}$ that achieves the exponential concentration only with the moment assumptions. On the other hand, the proof of Proposition \ref{prop_tail_of_cov_matrix} indicates that the original sample covariance matrix can achieve the exponential concentration with respect to $d_{AI}$ and $d_{BW}$ when it exhibits the exponential concentration with respect to matrix norms, which is possible typically under the sub-Gaussian type assumption \citep{vershynin2012introduction}[Corollary 5.50].

\section{Implementation and experiments}\label{section_implementation_experi}

A general procedure to implement our method is displayed in Algorithm \ref{alg_moe}. 
To implement the algorithm in practice, the most important part is the choice of the number of blocks $k$. While there is a precise quantity of $k$ for the given the confidence rate $\delta$ as derived in Section \ref{section_applications}, in practice $k$ is chosen to guarantee $p$ in Theorem \ref{thm_concen_of_med} to be small enough for $\floor{n/k}$ number of data \citep{lin2020RobustOA}[Remark 3.1]. Following this philosophy, the choice of $k$ for each experiment is determined to ensure the original estimator to reasonably concentrate within $\floor{n/k}$ data. 

We conducted experiments for each example in Section~\ref{section_applications}: Fr\'echet mean estimation in a NPC space and boosting the sample covariance estimator with respect to two different metrics $d_{AI}$ and $d_{BW}$. 

\begin{figure}[!t]
\centering
\includegraphics[width=0.18\textwidth]{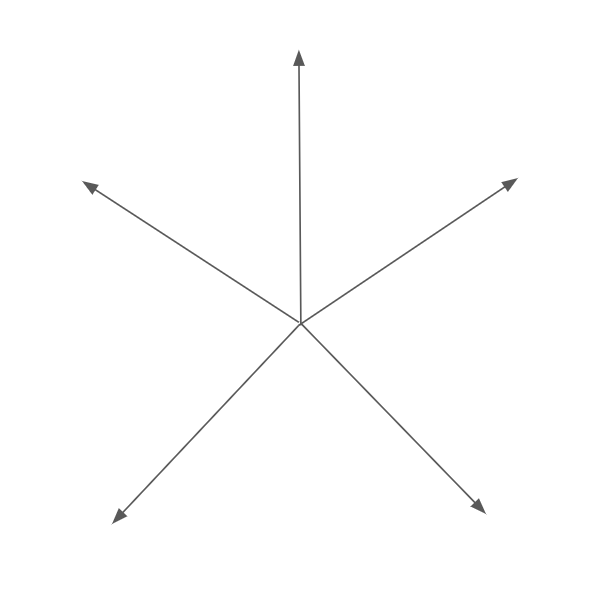}\hfill
\includegraphics[width=0.30\textwidth]{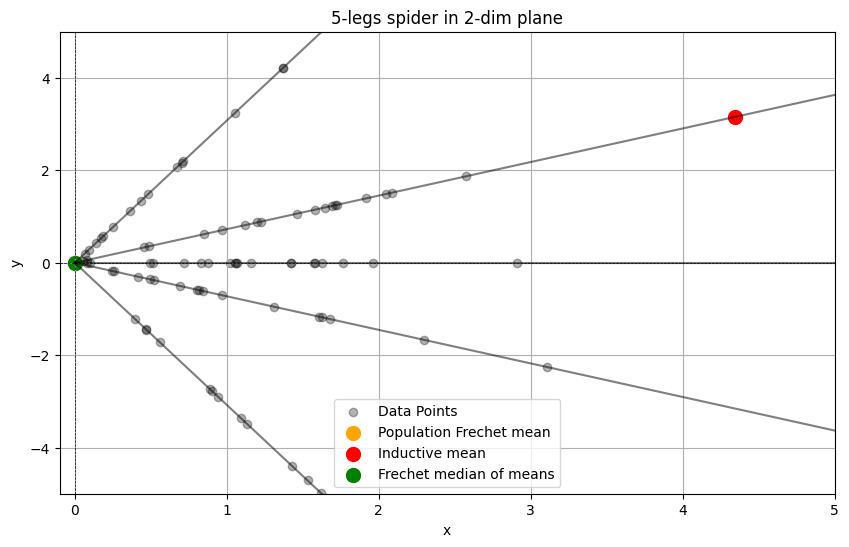}
%
\caption{\textbf{Left}: A 5-legs spider tree. \textbf{Right}: One experiment result on the 5-legs spider tree. The origin denotes the population Fr\'chet mean, and red and green dot stand for inductive mean and Fr\'echet median of inductive means respectively.}\label{figure_spider_5}
\end{figure}

\begin{figure*}[!h]
\centering
\includegraphics[width=0.33\textwidth]{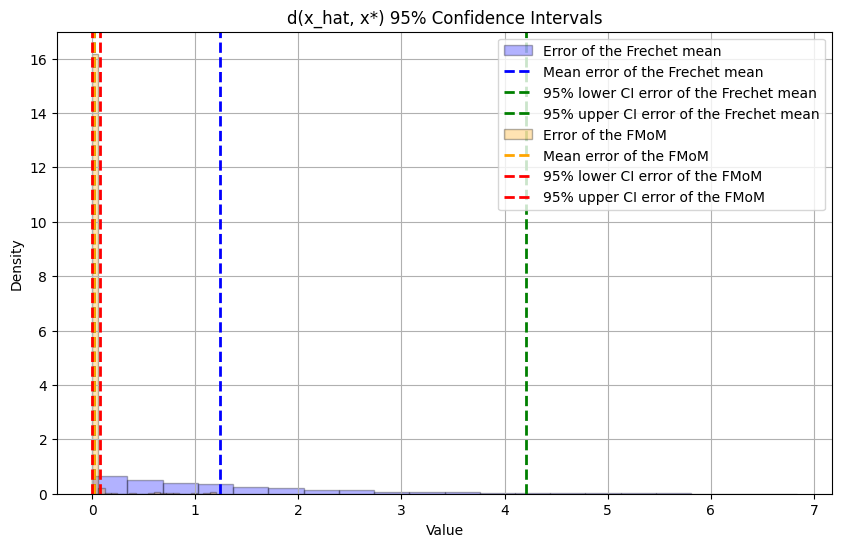}\hfill
\includegraphics[width=0.33\textwidth]{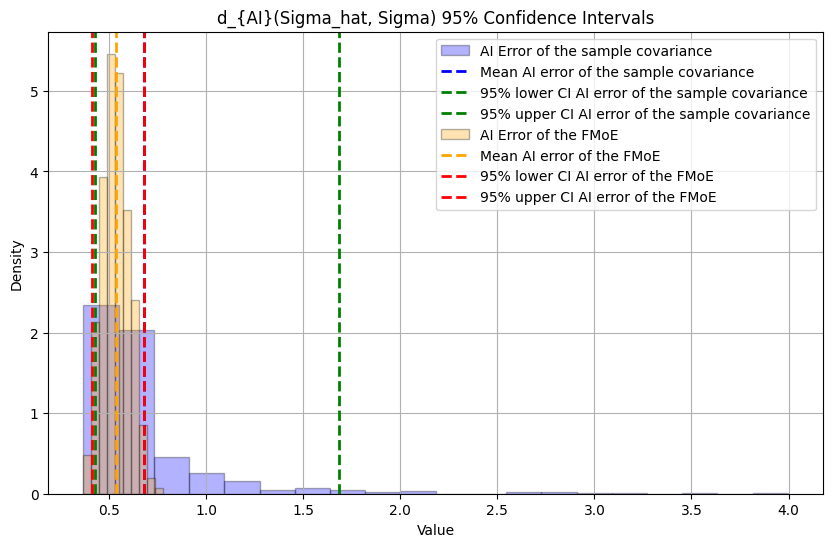}\hfill
\includegraphics[width=0.33\textwidth]{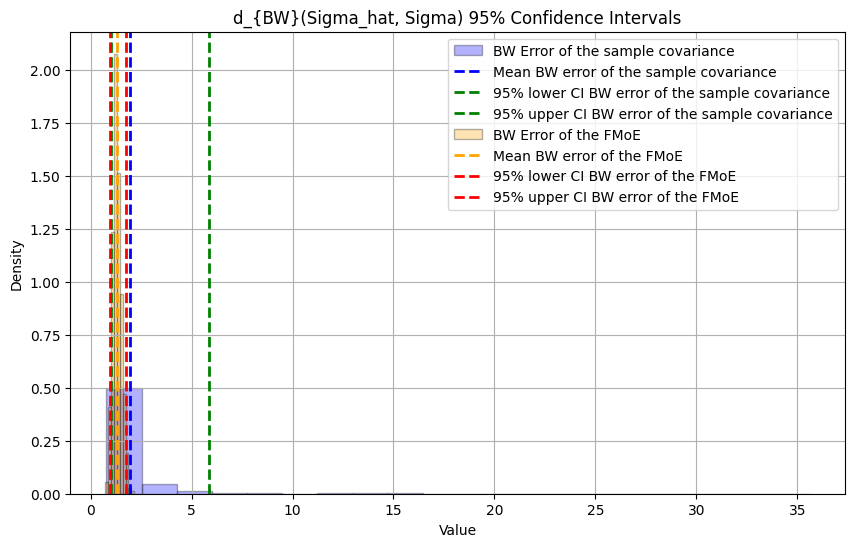}
%
\caption{Histogram, mean, and 95\% confidence interval for each experiment from 1000 simulations. \textbf{Left}: 5-legs spider. \textbf{Middle}: $(SPD, d_{AI})$. \textbf{Right}: $(SPD, d_{BW})$. All results indicate our method achieves much stronger concentration as well as much smaller mean squared errors.}\label{figure_res_ci}
\end{figure*}


\begin{table*}[!h]
\caption{Mean squared error and 95\% confidence interval comparisons from 1000 simulations.} \label{table_experi_res}
\begin{center}
\begin{tabular}{c|cc|cc}
Task  & $\bbE d^2(\widehat{\theta}, \theta)$ & $\bbE d^2(\widehat{\theta}_{FMoE}, \theta)$ & $d(\widehat{\theta}, \theta)$ CI & $d(\widehat{\theta}_{FMoE}, \theta)$ CI\\
\hline
5-spider tree & 3.1244 & $1.2 \times 10^{-5}$ & $[0.0110, 4.4202]$ & $[9.6 \times 10^{-5}, 0.0086]$\\
Covariance ($d_{AI}$) & 0.6057 & 0.2931 & $[0.4266, 1.6865]$ & $[0.4132, 0.6792]$\\
Covariance ($d_{BW}$) & 8.3281 & 1.7360 & $[0.9936, 5.8796]$ & $[0.9443, 1.7409]$
\end{tabular}
\end{center}
\end{table*}




As a first experiment, we conducted Fr\'echet mean estimation problem in a metric tree, a NPC space that is neither a Riemannian manifold nor a Hilbert space. Metric trees are widely used for their theoretical versatility and practical applications \citep{fakcharoenphol2003atightbound, abuata2016metrictree}. In this regard, estimating Fr\'echet mean in metric trees is a frequently encountered problem  \citep{romon2023convex}. Among the various choices of metric trees, we chose a spider tree model. A spider tree is the simplest metric tree that can be embedded in $\bbR^2$. A $d$-legs spider tree $S_d$ consists of $d$-copies of the positive real line, called legs, glued together at the origin. Consequently, this space is also an example of a stratified space. Figure \ref{figure_spider_5} illustrates a 5-legs spider tree. Mathematically, $S_d$ can be viewed as a quotient space $\set{1,\dots,d} \times \bbR_{\geq 0} / \sim $, where the equivalence relation is defined by $(x_1, 0) \sim (y_1, 0)$ for all $x_1, y_1 \in \set{1, \dots, d}$. The metric in this space is defined as follows: for $(x_1, x_2), (y_1, y_2) \in S_d$
\[
d(x,y) = \left\{
\begin{array}{ll}
    \abs{x_2-y_2} & \text{if } x_1 = y_1,\\
    \abs{x_2} + \abs{y_2} & \text{otherwise}.\\
\end{array} 
\right.
\]
The equivalence class $(x_1, 0)$ is called the center node. For more explanation on metric trees and spider trees, we refer to \citet{aksoy2010someresult}.

We chose a $5$-legs spider tree for our experiment. For the probability measure, we used $P = \text{Unif}(\set{1, \dots, 5}) \times \left((1-\alpha) \abs{N(1,1)} + \alpha \abs{N(100,1)}\right)$. Here, $\abs{N(\cdot, \cdot)}$ is a distribution of $\abs{X}$ for $X \sim N(\cdot, \cdot)$. $\alpha$ was used to add a small portion of outliers to make the distribution heavy tail. Due to symmetry, the Fr\'echet mean of $P$ becomes the center node. We used $\alpha = 0.1$, a sample size $n = 100$, and the number of blocks $k = 10$ for this experiment. Figure \ref{figure_spider_5} shows one of the simulation results.

For the covariance estimation problem, we set the population distribution in dimension $d = 10$ as $t_{2.5}(0, \Sigma)$,
where $\Sigma$ was randomly generated while fixing eigenvalues $\lambda_j = j$ for $j= 1, \dots d$. This distribution has a polynomial tail with a covariance being $5\Sigma$. We used a sample size of $n = 10 d^4$ (motivated by the $d^4$ term in the bound of Proposition \ref{prop_tail_of_cov_matrix}) and the number of blocks $k = 5$ for both $d_{AI}$ and $d_{BW}$.

Our results from representative experiments are summarized in Table \ref{table_experi_res} and Figure \ref{figure_res_ci}. For all experiments in this section, we conducted 1000 simulations for each task and obtained average squared errors and 95\% confidence intervals. Overall, the experiment results indicate that our method achieves higher concentration when the distribution has a heavy tail, provided the block size is chosen appropriately. Additional experiments under different settings (block sizes, population distributions, model spaces) are provided in Appendix~\ref{appendix_additional_experiment}.

Codes for our experiments can be found at \url{https://github.com/wldyddl5510/Frechet_median_of_means/}, and the implementation details are provided in Appendix \ref{appendix_implementation_detail}.

\section{Conclusion}

In this work, we extend the Fr\'echet median of estimators to CAT($\kappa$) spaces, which include almost all spaces of interest in statistics and machine learning. This generalization allows us to obtain exponential concentration on CAT($\kappa$) spaces under mild assumptions. In particular, we leverage this result to famous Fr\'echet mean and covariance estimation problems. Lastly, supportive numerical evidences are also provided. We conclude the paper with some open questions.

\begin{compactitem}
    \item[1.] Is there a more general space that enables Fr\'echet median based robust estimation to work? Our analysis on CAT($\kappa$) spaces almost covers the all existing approaches of Fr\'echet median based robust estimation methods, but not Banach space case proposed in \citet{minsker2015geometric}. This implies there is a possibility of more generalizations, for example, to non-Riemannian Finsler manifolds which are not Alexandrov spaces. In NPC spaces, the concept of generalized CAT(0) \citep{khamsi2017generalizedcat0} incorporates the CAT(0) spaces and Banach spaces. However, we are not aware of the generalization of this concept beyond NPC spaces.
    \item[2.] Is there a way to overcome the curvature upper bound condition? Our proof crucially relies on the curvature upper bound. However, some spaces without the curvature upper bound are widely studied, e.g., Wasserstein space over $\mathbb{R}^d$. Whether one can extend the similar methods to such spaces is unknown.
    \item[3.] Can one obtain bounds independent of population quantities? When one wants to determine the optimal number of blocks or construct the confidence regions, unknown population quantities in the bound hinder us from obtaining the precise value. Some methods were developed to overcome those dependencies, e.g., adaptation methods. Developing such a method for our estimator is an important factor for practical applications.
\end{compactitem}




\subsubsection*{Acknowledgements}

We appreciate Young-Heon Kim, Inyoung Ryu, and Byeongsu Yu for their helpful discussions. In addition, we appreciate anonymous reviewers for their insightful feedback during the review process. JP acknowledges support from NSF DMS 2210689. AB acknowledges NSF DMS 2210689, NSF DMS 1916371 for supporting this project.


\newpage

\clearpage

\newpage

\subsubsection*{References}

\bibliographystyle{plainnat}

\bibliography{main.bib}


\newpage

\clearpage

\newpage

\section*{Checklist}

 \begin{enumerate}

 \item For all models and algorithms presented, check if you include:
 \begin{enumerate}
   \item A clear description of the mathematical setting, assumptions, algorithm, and/or model. [\checkmark Yes /No/Not Applicable]

    - We included mathematical detail in Sections~\ref{section_robustify} and \ref{section_applications}.
   
   \item An analysis of the properties and complexity (time, space, sample size) of any algorithm. [\checkmark Yes/No/Not Applicable]

    - We included the a brief description of the algorithm when the algorithm is feasible in the end of Section \ref{section_robustify}. However, we did not include the complexity, as it depends on the choice of specific implementation detail, e.g., which Fr\'echet median algorithm to use or what the parameter space is.
   
   \item (Optional) Anonymized source code, with specification of all dependencies, including external libraries. [\checkmark Yes/No/Not Applicable]

   - We will attach the source code in the supplement.
 \end{enumerate}

 \item For any theoretical claim, check if you include:
 \begin{enumerate}
   \item Statements of the full set of assumptions of all theoretical results. [\checkmark Yes/No/Not Applicable]

    - Sections \ref{section_robustify} and \ref{section_applications} specifically addressed theoretical detail with full explanations.
   
   \item Complete proofs of all theoretical results. [\checkmark Yes/No/Not Applicable]

    - All proofs are provided in Appendix \ref{appendix_proofs}.
   
   \item Clear explanations of any assumptions. [\checkmark Yes/No/Not Applicable]    

    - We made remarks for the most of Theorems discussing the assumptions.
   
 \end{enumerate}

 \item For all figures and tables that present empirical results, check if you include:
 \begin{enumerate}
   \item The code, data, and instructions needed to reproduce the main experimental results (either in the supplemental material or as a URL). [\checkmark Yes/No/Not Applicable]

    - We will attach the source code in the supplement, which contains the exact code one can reproduce.
   
    \item All the training details (e.g., data splits, hyperparameters, how they were chosen). [\checkmark Yes/No/Not Applicable]

    - We provided implementation and experiment detail in Appendix \ref{appendix_additional_experiment}.
    
    \item A clear definition of the specific measure or statistics and error bars (e.g., with respect to the random seed after running experiments multiple times). [\checkmark Yes/No/Not Applicable]

    - We explicitly mentioned the measuring criteria and number of simulations in Section \ref{section_implementation_experi} and Appendix \ref{appendix_additional_experiment}.
    
    \item A description of the computing infrastructure used. (e.g., type of GPUs, internal cluster, or cloud provider). [\checkmark Yes/No/Not Applicable]

    - We explicitly mentioned the computing resource we used in Appendix \ref{appendix_additional_experiment}.
    
 \end{enumerate}

 \item If you are using existing assets (e.g., code, data, models) or curating/releasing new assets, check if you include:
 \begin{enumerate}
   \item Citations of the creator If your work uses existing assets. [\checkmark Yes/No/Not Applicable]

    - We made a citation of the package we used in Appendix \ref{appendix_additional_experiment}.
   
   \item The license information of the assets, if applicable. [Yes/No/\checkmark Not Applicable]

    - We only utilized the open-source libraries.
   
   \item New assets either in the supplemental material or as a URL, if applicable. [Yes/No/\checkmark Not Applicable]
   \item Information about consent from data providers/curators. [Yes/No/\checkmark Not Applicable]
   
   \item Discussion of sensible content if applicable, e.g., personally identifiable information or offensive content. [Yes/No/\checkmark Not Applicable]
 \end{enumerate}

 \item If you used crowdsourcing or conducted research with human subjects, check if you include:
 \begin{enumerate}
   \item The full text of instructions given to participants and screenshots. [Yes/No/\checkmark Not Applicable]
   \item Descriptions of potential participant risks, with links to Institutional Review Board (IRB) approvals if applicable. [Yes/No/ \checkmark Not Applicable]
   \item The estimated hourly wage paid to participants and the total amount spent on participant compensation. [Yes/No/\checkmark Not Applicable]
 \end{enumerate}

 \end{enumerate}

\onecolumn
\aistatstitle{Robust Estimation in metric spaces: Achieving Exponential Concentration with a Fr\'echet Median: \\
Supplementary Materials}
\appendix
\section{Preliminaries}
\subsection{CAT($\kappa$) spaces} \label{appendix_alexandrov_catk}
CAT($\kappa$) spaces, also known as Alexandrov spaces, are a generalization of Riemannian manifolds with a bounded upper sectional curvature. CAT($\kappa$) spaces are locally compact complete length spaces with a uniformly bounded curvature. Due to its generality and rich implications in geometry, they have been extensively studied more than seven decades (\citet{Aleksandrov1951, BuragoGromovPerelman1992, BridsonHaefliger1999, sturm1999metric, Buragoetalmetrygeometry2001}).

There are three equivalent ways to define them via the \emph{distance functions}, the \emph{speed of geodesics} and the \emph{comparison triangles}, respectively. For the sake of completeness, we introduce all of them. For more details, see \citet{kunzinger2018alexndrov}.

To begin with, we define model spaces $M^2_\kappa$.

\begin{defn}[Model spaces]
For $\kappa \in \mathbb{R}$, we denote by $M^2_\kappa$ the following spaces:
\begin{enumerate}
    \item[(i)] $M^2_0$ is the Euclidean plane $\mathbb{R}^2$.
    \item[(ii)] For $\kappa > 0$, $M^2_\kappa = \frac{1}{\sqrt{\kappa}} \mathbb{S}^2$ where $\mathbb{S}^2$ is the $2$-dimensional sphere.
    \item[(iii)] For $\kappa < 0$, $M^2_\kappa = \frac{1}{\sqrt{|\kappa|}} \mathbb{H}^2$ where $\mathbb{H}^2$ is the $2$-dimensional hyperbolic space.
\end{enumerate}
We use $d_\kappa$ to denote the intrinsic metric of $M^2_\kappa$.
\end{defn}

Throughout this section, let $(\calX, d)$ be a complete, connected metric space.


\begin{defn}[Length space]
$(\calX, d)$ is called a length space if
the metric $d$ is the intrinsic length metric, i.e. for any $x, y \in \calX$, and any path $\alpha : [a,b] \to \calX$ such that $\alpha(a)=x, \alpha(b)=y$,
\[
    d(x,y) = \inf_{\alpha} \left\{ \emph{length } \alpha \right\}
\]
where the length of path $\alpha$ is defined as
\[
    \emph{length } \alpha := \sup \left\{ \sum_{i \geq 1} d(\alpha(t_i), \alpha(t_{i-1})) : a=t_0 \leq t_1 \leq \dots \leq t_n=b \right\}.
\]
\end{defn}

\begin{defn}[Complete length space]
A path $\alpha : [a,b] \to \calX$ is called a shortest path if for any other paths $\beta$ connecting $\alpha(a)$ to $\alpha(b)$, $\emph{length } \alpha \leq \emph{length } \beta$.

For a length space $(\calX, d)$, $\alpha$ is a shortest path connecting $\alpha(a)$ to $\alpha(b)$ if and only if
\[
    d( \alpha(a),\alpha(b)) = \emph{length } \alpha.
\]
If for any $x,y \in \calX$, there is a shortest path connecting $x$ to $y$, then $(\calX, d)$ is called strictly intrinsic, and a complete length space. 
\end{defn}

\begin{defn}[Geodesic space]\label{def: geodesic space}
We call a complete length space as a geodesic space, and shortest paths as geodesics.    
\end{defn}

\begin{defn}
For $x,y \in \calX$, write $T = d(x,y)$. $\alpha$ is called a geodesic with unit speed connecting $x$ to $y$, or a geodesic parametrized by arc length, if $\alpha : [0, T] \to \calX$ satisfies $d(x, \alpha(t) ) = t$ for any $t \in [0,T]$.

We use $[xy]=\alpha$ to denote a geodesic with unit speed connecting $x$ to $y$.
\end{defn}

Now, we provide the first definition of CAT($\kappa$) spaces via distance function. In simple terms, this definition states that curvature distorts the space such that, from any fixed point, any point lies in the middle of the geodesic is further away than it would be in the model space.

Given $x,y \in \calX$, fix $p \in \calX$. \emph{One-dimensional distance function} (at $p$) is defined as
\[
    g(t):= d(p, \alpha(t)).
\]

We will compare $g(t)$ to an appropriate one-dimensional distance function in the model space. To this end, given $x,y, p \in \calX$, choose $\widetilde{x}, \widetilde{y}, \widetilde{p}$ in the model space $M_{\kappa}^2$ such that
\[
    d_\kappa(\widetilde{x}, \widetilde{y}) = d(x,y),\, d_\kappa(\widetilde{p}, \widetilde{x}) = d(p,x) \text{ and } d_\kappa(\widetilde{p}, \widetilde{y}) = d(p,y).
\]
For $\kappa > 0$ we further assume 
\begin{equation}\label{eq: perimeter condition for positive kappa}
    d(x,y) + d(p,x) + d(p,y) \leq 2 D_\kappa = \frac{2 \pi}{\sqrt{\kappa}}.
\end{equation}
We call $\{ \widetilde{x}, \widetilde{y}, \widetilde{p} \}$ as \emph{comparison configuration}, $\widetilde{p}$ as a \emph{reference point}, and $[\widetilde{x} \widetilde{y}]=\widetilde{\alpha} : [0,T] \to M^2_\kappa$, the geodesic connecting $\widetilde{x}$ to $\widetilde{y}$ with unit speed in $M^2_\kappa$, as a \emph{comparison segment} respectively. Note that comparison configuration is unique up to rigid motions.

\begin{defn}[Comparison distance function]
Define
\[
    \widetilde{g}_{\kappa}(t):= d_\kappa(\widetilde{p}, \widetilde{\alpha}(t))
\]
the model space distance from the reference point $\widetilde{p}$ to the comparison segment $[\widetilde{x} \widetilde{y}]=\widetilde{\alpha}$. This $\widetilde{g}_{\kappa}$ is called the comparison distance function of $g$.
\end{defn}

\begin{defn}[Distance condition]\label{def: distance condition}
A geodesic space $(\calX, d)$ is a CAT($\kappa$) space if every point in $\calX$ has a neighbourhood $U$ such that the following holds: for any point $p \in U$ and all $[xy]=\alpha \subseteq U$, the comparison distance function $\widetilde{g}_{\kappa}(t)$ for one-dimensional distance function $g(t)=d(p, \alpha(t))$ satisfies
\begin{equation}\label{eq: distance condition}
     \widetilde{g}_{\kappa}(t) \geq g(t) \text{ for all $t \in [0,T]$}.
\end{equation}
For $\kappa > 0$, we further assume \eqref{eq: perimeter condition for positive kappa}.
\end{defn}

The next definition is followed by triangle comparison. In fact, it is straightforwardly equivalent to the definition via distance function.

For $x,y,z \in \calX$, a \emph{triangle} is a triangle with sides $[xy], [yz], [xz]$, each of which is a geodesic with unit speed of a pair of three points. We write $\triangle xyz$ to denote for the triangle of $x,y,z$ with side lengths $d(x,y), d(y,z)$ and $d(x,z)$. Notice that since there are multiple geodesics, $x,y,z$ cannot determine $\triangle xyz$ uniquely. However, every $\triangle xyz$ has the same side lengths.

A \emph{comparison triangle} $\triangle \widetilde{x}\widetilde{y}\widetilde{z}$ is a triangle of a comparison configuration in the model space; hence $\triangle \widetilde{x}\widetilde{y}\widetilde{z}$ has the same side lengths as $\triangle xyz$. Clearly, a comparison triangle is also unique up to rigid motions. Let $[\widetilde{x} \widetilde{y}]$ be the geodesic in the model space connecting $\widetilde{x}$ to $\widetilde{y}$ with the length $d(x,y)$: i.e. the side between $\widetilde{x}$ and $\widetilde{y}$ of the comparison triangle $\triangle \widetilde{x}\widetilde{y}\widetilde{z}$. If $\alpha=[xy]$, we use $\widetilde{\alpha}:= [\widetilde{x} \widetilde{y}]$, the natural comparison geodesic in the model space induced by $\alpha$.

\begin{defn}[Triangle condition]\label{def: triangle condition}
A geodesic space $(\calX, d)$ is CAT($\kappa$) space if every point in $\calX$ has a neighbourhood $U$ such that the following holds: for every triangle $\triangle xyz \subseteq U$ and every point $w \in [xz]$,
\begin{equation}\label{eq: triangle condition}
    d(w, y) \leq d_\kappa(\widetilde{w}, \widetilde{y})
\end{equation}
where $\widetilde{w} \in [\widetilde{x} \widetilde{z}]$ such that $d_\kappa(\widetilde{x}, \widetilde{w}) = d(x,w)$, or $\widetilde{w}=\widetilde{\alpha}(d(x,w))$. For $\kappa > 0$, we further assume \eqref{eq: perimeter condition for positive kappa}.
\end{defn}

The last one to define CAT($\kappa$) spaces is to compare angles. In words, angles in upper bounded curvatured spaces are smaller than in the model space. This is indeed equivalent to the fact that two geodesics in CAT($\kappa$) spaces starting at the same initial point with a certain angle is indeed further than those with the same angle in the model space.

\begin{defn}
Given distinct points $x,y,z \in \calX$, the comparison angle $\angle \widetilde{x} \widetilde{y} \widetilde{z}$ at $y$ is the angle at $\widetilde{y}$ of the comparison triangle $\triangle \widetilde{x} \widetilde{y} \widetilde{z}$. Alternatively, 
\[
    \angle \widetilde{x} \widetilde{y} \widetilde{z} := \arccos{\frac{d(x,y)^2 + d(y,z)^2 - d(x,z)^2}{2d(x,y)d(y,z)}}.
\]
\end{defn}

\begin{defn}
Let $\alpha, \beta : [0,T) \to \calX$ be two paths with the same initial point $p$. The angle between $\alpha$ and $\beta$ is defined as
\[
    \angle(\alpha, \beta) := \lim_{s,t \to 0} \angle \widetilde{\alpha}(s) \widetilde{p} \widetilde{\beta}(t)
\]
if the limit exists. Given three distinct points $x,y,z \in \calX$, the angle of $\triangle xyz$ at $y$ is defined as
\[
    \angle xyz := \angle([xy], [yz]).
\]
\end{defn}

\begin{rmk}
It is not trivial that the angle between two paths always exists. However, if $\kappa \leq 0$, it always exists: see \citet{kunzinger2018alexndrov}[3.3.2 Proposition]. If $\kappa > 0$, \eqref{eq: perimeter condition for positive kappa} should be required.
\end{rmk}

\begin{defn}[Angle condition]\label{def: angle condition}
A geodesic space $(\calX, d)$ is CAT($\kappa$) space if every point in $\calX$ has a neighbourhood $U$ such that the following holds: for every triangle $\triangle xyz \subseteq U$ the angles of $\triangle xyz$ satisfy
\begin{equation}\label{eq: angle condition}
    \angle yxz \leq \angle \widetilde{y} \widetilde{x} \widetilde{z}, \, \angle zyx \leq \angle \widetilde{z} \widetilde{y} \widetilde{x}, \text{ and } \angle xzy \leq \angle \widetilde{x} \widetilde{z} \widetilde{y}
\end{equation}
where $\triangle \widetilde{x}  \widetilde{y} \widetilde{z}$ is a comparison triangle of $\triangle xyz$ in the model space $M^2_\kappa$. For $\kappa > 0$, we further assume \eqref{eq: perimeter condition for positive kappa}.
\end{defn}

The three conditions, \eqref{eq: distance condition}, \eqref{eq: triangle condition} and \eqref{eq: angle condition}, are equivalent to define CAT($\kappa$) spaces.

\begin{thm}
All the definitions of CAT($\kappa$) spaces, that is the distance condition~\ref{def: distance condition}, the triangle condition~\ref{def: triangle condition}, and the angle condition~\ref{def: angle condition} are equivalent.
\end{thm}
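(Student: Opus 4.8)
The plan is to close the cycle of implications in the order: distance condition $\Leftrightarrow$ triangle condition (by matching notation), triangle condition $\Rightarrow$ angle condition (by monotonicity of comparison angles), and angle condition $\Rightarrow$ triangle condition (by Alexandrov's Lemma). Since all three conditions are \emph{local} --- each asserts something on a neighbourhood $U$ of every point --- I would first shrink $U$ if necessary so that it is convex with unique geodesics, reducing each condition to a ``within $U$'' statement; this is a routine preliminary (see \citet{BridsonHaefliger1999, kunzinger2018alexndrov}) and I would not dwell on it. The equivalence distance $\Leftrightarrow$ triangle is then pure bookkeeping: given $\triangle xyz \subseteq U$ and $w \in [xz]$, put $p := y$, $\alpha := [xz]$, and identify $w = \alpha(d(x,w))$; the one-dimensional distance function satisfies $g(d(x,w)) = d(w,y)$, the comparison configuration $\{\widetilde x, \widetilde z, \widetilde p\}$ coincides with the comparison triangle $\triangle\widetilde x\widetilde y\widetilde z$ (under $\widetilde p = \widetilde y$) with $\widetilde w = \widetilde\alpha(d(x,w))$, and hence $\widetilde g_\kappa(d(x,w)) = d_\kappa(\widetilde w,\widetilde y)$, so \eqref{eq: distance condition} holding for all $t$ is literally \eqref{eq: triangle condition} holding for all $w \in [xz]$, with the same perimeter restriction \eqref{eq: perimeter condition for positive kappa}.

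For triangle $\Rightarrow$ angle, fix $\triangle xyz \subseteq U$ and, for $0 < s \le d(x,y)$ and $0 < t \le d(x,z)$, let $\gamma(s,t)$ denote the angle at the $x$-vertex of the $M^2_\kappa$-comparison triangle of $\triangle x\,[xy](s)\,[xz](t)$. The crux is the claim that $\gamma(s,t)$ is non-decreasing in each variable. To prove monotonicity in $s$ for $s_1 < s_2$: in the comparison triangle of $\triangle x\,[xy](s_2)\,[xz](t)$ mark the point $\widetilde a$ on the side from $\widetilde x$ to the image of $[xy](s_2)$ with $d_\kappa(\widetilde x,\widetilde a) = s_1$, and apply the triangle condition to $\triangle x\,[xy](s_2)\,[xz](t)$ at the interior point $[xy](s_1)$, obtaining $d([xy](s_1),[xz](t)) \le d_\kappa(\widetilde a,\widetilde b)$ with $\widetilde b$ the image of $[xz](t)$. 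The comparison triangle of $\triangle x\,[xy](s_1)\,[xz](t)$ then has adjacent side lengths $s_1, t$ at $\widetilde x$ and opposite side no longer than $d_\kappa(\widetilde a,\widetilde b)$, so the monotonicity of the law of cosines in $M^2_\kappa$ (with two sides fixed, a shorter opposite side gives a smaller included angle) gives $\gamma(s_1,t) \le \angle_{\widetilde x}$ of $\triangle\widetilde x\widetilde a\widetilde b$, and the latter equals $\gamma(s_2,t)$ since $\widetilde a$ lies on the segment from $\widetilde x$ to the image of $[xy](s_2)$. The argument in $t$ is symmetric. Hence $\lim_{s,t\to 0}\gamma(s,t)$ exists, equals $\inf_{s,t}\gamma(s,t)$, and is $\le \gamma(d(x,y),d(x,z)) = \angle\widetilde y\widetilde x\widetilde z$; since for shrinking triangles the $M^2_\kappa$-comparison angle and the Euclidean comparison angle of Definition~\ref{def: angle condition} share the same limit, this limit is the Alexandrov angle $\angle yxz$, which yields \eqref{eq: angle condition} at $x$ and, by symmetry, at $y$ and $z$.

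For angle $\Rightarrow$ triangle, given $\triangle xyz \subseteq U$ and $w$ interior to $[xz]$, split into $\triangle xwy$ and $\triangle wzy$. Since $[xw]\cup[wz] = [xz]$ is a geodesic, the angle at $w$ between its two directions is $\pi$ (immediate from the definition: for small parameters the comparison-angle quotient equals $-1$), so the angle triangle inequality gives $\angle xwy + \angle ywz \ge \angle xwz = \pi$. The angle condition on the two sub-triangles bounds $\angle xwy$ and $\angle ywz$ below by the $M^2_\kappa$-comparison angles of $\triangle xwy$ and $\triangle wzy$ at their $w$-vertices, so these two comparison angles sum to at least $\pi$. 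Now take the comparison triangles $\widetilde\triangle(xwy)$ and $\widetilde\triangle(wzy)$ in $M^2_\kappa$ --- they share a side of length $d(w,y)$ --- glue them along that side with the $x$- and $z$-vertices on opposite sides, and invoke Alexandrov's Lemma: the angle sum at least $\pi$ at the glued $w$-vertex forces the ``straightened'' figure, i.e.\ the genuine $M^2_\kappa$-triangle with side lengths $d(x,w)+d(w,z) = d(x,z)$, $d(x,y)$, $d(y,z)$ --- which is exactly the comparison triangle $\triangle\widetilde x\widetilde y\widetilde z$ of $\triangle xyz$ --- to place the point $\widetilde w$ at distance $d(x,w)$ from $\widetilde x$ along $[\widetilde x\widetilde z]$ at distance at least $d(w,y)$ from $\widetilde y$, that is $d(w,y) \le d_\kappa(\widetilde w,\widetilde y)$. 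This is precisely the triangle condition, closing the loop.

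The main obstacle is the comparison geometry inside the model spaces $M^2_\kappa$: both the angle-monotonicity step and the straightening step rest on Alexandrov's Lemma and the monotonicity of the law of cosines in $M^2_\kappa$, and they require a careful check that when $\kappa > 0$ every auxiliary triangle constructed along the way still satisfies the perimeter bound \eqref{eq: perimeter condition for positive kappa} --- which holds because all of them are sub-triangles of the original triangle --- so that the relevant $M^2_\kappa$-triangles, angles, and comparison points are well defined. A secondary subtlety is that the statement uses ``comparison angle'' in two a priori different senses when $\kappa \ne 0$ (the angle in the $M^2_\kappa$ comparison triangle versus the Euclidean $\arccos$ formula); this is harmless since only the limit of comparison angles as the triangle shrinks enters the definition of the Alexandrov angle, and that limit is the same for both.
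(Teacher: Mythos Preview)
The paper does not actually prove this theorem: it is stated as a background result imported from the literature (the references to \citet{kunzinger2018alexndrov}, \citet{BridsonHaefliger1999}, \citet{Buragoetalmetrygeometry2001} in the appendix are where the reader is sent). So there is no ``paper's own proof'' to compare against; your proposal is to be judged on its own merits.

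Your outline is the standard textbook argument and is correct in substance. The equivalence distance $\Leftrightarrow$ triangle is indeed pure renaming; triangle $\Rightarrow$ angle via monotonicity of $M^2_\kappa$-comparison angles is the usual proof; and angle $\Rightarrow$ triangle via splitting at $w$, using the angle triangle inequality at $w$, and then Alexandrov's Lemma to straighten the glued comparison triangles is exactly what one finds in Bridson--Haefliger or Burago--Burago--Ivanov. Your remarks about shrinking $U$ to a convex neighbourhood, about the perimeter bound \eqref{eq: perimeter condition for positive kappa} being inherited by sub-triangles, and about the two senses of ``comparison angle'' (Euclidean arccos versus $M^2_\kappa$-angle) agreeing in the limit are all apposite.

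One slip of the pen: in the angle $\Rightarrow$ triangle paragraph you write that the angle condition ``bounds $\angle xwy$ and $\angle ywz$ \emph{below} by the $M^2_\kappa$-comparison angles.'' The angle condition \eqref{eq: angle condition} bounds the Alexandrov angles \emph{above} by the comparison angles, i.e.\ $\angle xwy \le \widetilde{\angle} xwy$ and $\angle ywz \le \widetilde{\angle} ywz$; combined with $\angle xwy + \angle ywz \ge \pi$ this gives $\widetilde{\angle} xwy + \widetilde{\angle} ywz \ge \pi$, which is what you then feed into Alexandrov's Lemma. Your conclusion is right; only the word ``below'' is inverted.
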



Lastly, we note that analogous statements can be made for spaces with bounded lower curvature by reversing the direction of the inequalities in the definitions above. Such spaces arise in the fields of optimal transport and Wasserstein geometry.

\subsubsection{Fr\'echet mean estimation in NPC spaces}\label{appendix_frechet_mean_estimation}

In this appendix, we consider two different estimators for population Fr\'echet mean in NPC spaces: empirical Fr\'echet mean and inductive mean. Throughout this appendix, we fix $(\calX, d)$ to be an NPC space.

First is an empirical Fr\'ehet mean, the most natural way to estimate the Fr\'echet mean. 
\begin{defn}[Empirical Fr\'echet mean]
Given $n$ data points $x_1, \dots, x_n \in \calX$, the empirical Fr\'echet mean is defined by 
\[
\widehat{x}_{EM} = \argmin_{x \in \calX} \frac{1}{n} \sum_{j=1}^{n} d^2(x, x_j).
\]
\end{defn}

Inductive mean is another natural way to estimate the Fr\'echet mean proposed by \citet{sturm2000NPC}, coming from the generalization of the law of large numbers. As the name implies, the inductive mean is defined `inductively'. 
\begin{defn}[Inductive mean]
    Set $\delta$ and $k$ in a same way. Given $n$ data points $x_1, \dots, x_n \in \calX$, define a sequence $s_i$ as follows:
    \[
    s_1 = x_1, \qquad s_i = \left(1 - \frac{1}{i}\right)s_{i-1} + \frac{1}{i} x_i \text{ for } i = 2, \dots, n
    \]
    where the summation can be understood as a geodesic interpolation with a given ratio. Then, the resulting $s_n := \widehat{x}_{IM}$ is called inductive mean.
\end{defn}

These two estimators coincide to the arithmetic mean in Hilbert space, but not in the general metric space. As pointed out in \citet{sturm2000NPC}, the inductive mean depends on the permutation of $x_i$'s, unlike the empirical Fr\'echet mean. However, the inductive mean has certain advantages over the empirical Fr\'echet mean from both theoretical and practical perspectives. Practically, if you have a geodesic interpolation oracle, computing the inductive mean is straightforward. Theoretically, the following proposition highlights the benefits of using the inductive mean.

\begin{prop} 
    Let $x_i, \dots, x_n \iid P \in \calP_2(\calX)$. Let $\widehat{x}$ be either $\widehat{x}_{EM}$ or $\widehat{x}_{IM}$. If $\widehat{x} = \widehat{x}_{EM}$, in addition assume $\kappa_{\min}(\calX) > -\infty$. Then,
    \[
    \bbE\left[d^2(\widehat{x}, x^*)\right] \leq \frac{\sigma^2}{n}
    \]
    where $\sigma^2 = \bbE_{y \sim P}[d^2(x^*, y)]$ is the second moment of $P$.
\end{prop}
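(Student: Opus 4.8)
The plan is to establish the two cases separately, with the empirical Fr\'echet mean and the inductive mean each requiring a different classical input. For the inductive mean $\widehat{x}_{IM}$, I would invoke the variance inequality for the inductive procedure in NPC spaces due to \citet{sturm2000NPC}[Theorem 4.7], which states precisely that the $i$-th inductive iterate satisfies $\bbE[d^2(s_i, x^*)] \le \sigma^2/i$; specializing to $i = n$ gives the claim directly. The only thing to check is that the hypotheses of that theorem are met: $(\calX,d)$ is NPC (assumed), $P \in \calP_2(\calX)$ (assumed), and the Fr\'echet mean $x^*$ exists (guaranteed by Remark~\ref{rmk_frechet_exist_unique}). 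No curvature lower bound is needed here, which is the advertised advantage of the inductive mean.

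For the empirical Fr\'echet mean $\widehat{x}_{EM}$, I would instead appeal to \citet{LeGouic2019FastCO}[Corollary 3.4], which yields $\bbE[d^2(\widehat{x}_{EM}, x^*)] \le \sigma^2/n$ under the additional assumption $\kappa_{\min}(\calX) > -\infty$. The curvature lower bound enters because the convergence-rate analysis of the empirical barycenter relies on a quantitative strong-convexity (or "quadruple inequality"/variance inequality) estimate for the Fr\'echet functional, and in NPC spaces without a lower curvature bound this estimate can degrade; a finite $\kappa_{\min}$ restores the comparison with the Euclidean rate. Again I would verify the remaining hypotheses (geodesic completeness, $P \in \calP_2$, existence of $x^*$) are in force from the standing assumptions and Remark~\ref{rmk_frechet_exist_unique}.

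Since this proposition is stated in the appendix essentially to record a known fact for later use (it is the same statement as Proposition~\ref{prop_expected_error_mean_estimators} minus the Markov-inequality corollary), I do not expect any genuine obstacle: the proof is a two-line citation split by the choice of estimator. If one wanted a self-contained argument, the main work would be reproving the variance inequality $\bbE[d^2(\bar{x}_n, x^*)] \le \sigma^2/n$, which for the inductive mean follows by induction using the NPC (CAT(0)) four-point comparison $d^2((1-t)a + tb, x^*) \le (1-t)d^2(a,x^*) + t\,d^2(b,x^*) - t(1-t)d^2(a,b)$ together with the fact that $x^*$ minimizes $\bbE_{y}[d^2(\cdot, y)]$; and for the empirical mean would require the more delicate argument of \citet{LeGouic2019FastCO}. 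But for the purposes here the citation-based proof suffices.
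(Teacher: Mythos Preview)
Your proposal is correct and matches the paper's own proof essentially verbatim: the paper simply cites \citet{LeGouic2019FastCO}[Corollary 3.4] for the $\widehat{x}_{EM}$ case and \citet{sturm2000NPC}[Theorem 4.7] for the $\widehat{x}_{IM}$ case, exactly as you do. Your additional remarks on verifying hypotheses and the role of the curvature lower bound are accurate elaborations but not required.
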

\begin{proof}
    For $\widehat{x}_{EM}$ case, see \citet{LeGouic2019FastCO}[Corollary 3.4]. For $\widehat{x}_{IM}$ case, see \citet{sturm2000NPC}[Theorem 4.7].
\end{proof}

The fact that $\widehat{x}_{IM}$ does not require the additional condition on the curvature lower bound of $\calX$ is favorable, as there are cases where we need to deal with spaces of unbounded curvature, such as metric trees or statistical manifolds with Gamma and Dirichlet distributions, to name a few.


\section{Deferred proofs} \label{appendix_proofs}

This appendix contains detailed proofs of the results that are missing in the main paper.

\subsection{Proofs in Section \ref{section_robustify}} \label{appendix_proof_section_robustify}

\textbf{Proof of Lemma \ref{lem_geo_med}:}
\begin{proof}

As in \citet{minsker2015geometric}[Lemma 2.1] suppose the implication is false for the contradiction. Without the loss of generality, it means that $d(x_j, z) \leq r$ for  $j = 1, \dots, \left \lfloor{(1-\alpha) k}\right \rfloor + 1$.

We separately analyze the cases when $\kappa \leq 0$ and $\kappa > 0$.

\textbf{CASE I: $\kappa \leq 0$.}

Recall
\[
    F(x) := \frac{1}{k} \sum_{j=1}^k  d(x, x_j).
\]
Note that $F(x)$ always admits a minimizer followed by \citet{bacak2014computing}[Lemma 2.3]. Let $x^*$ be a median (a minimizer of $F(\cdot)$) and $z \neq x^*$ be an arbitrary point. Let $\alpha: [0,1] \rightarrow \calX$ be a geodesic curve in $\calX$ such that $\alpha(0) = x^*$ and $\alpha(1) = z$. Since $\alpha(0) = x^*$ is a minimizer of $F$, we have
    \begin{align}\label{eq_minimizer_condi}
        \limsup_{t \rightarrow 0} \frac{F(\alpha(t)) - F(\alpha(0))}{t} \geq 0.
    \end{align}
Now, notice that 
    \begin{align}\label{eq_first_var}
        \begin{split}
        \limsup_{t \rightarrow 0} \frac{F(\alpha(t)) - F(\alpha(0))}{t} &\leq \sum_{j=1}^{k}  \limsup_{t \rightarrow 0}\frac{d(x_j, \alpha(t)) - d(x_j,x^*)}{t}  \indicator_{\{x_j \neq x^*\}}\\
        &\quad + \sum_{j=1}^{k}  \limsup_{t \rightarrow 0} \frac{d(x_j, \alpha(t))}{t} \indicator_{\{x_j = x^*\}}.
        \end{split}
    \end{align}

    For the first term, the first variation formula in Alexandrov spaces (see \cite{kunzinger2018alexndrov}[Proposition 3.4.2]) with $l(t) = d(x_j, \alpha(t))$ gives
    \[
    \limsup_{t \rightarrow 0}\frac{d(x_j, \alpha(t)) - d(x_j,x^*)}{t} \leq - \cos \gamma_j
    \]
    where $\gamma_j = \angle x_j x^* z$ in Alexandrov sense. Since $\calX$ is a NPC space, comparing between $\triangle x_j x^* z$ and its Euclidean comparison triangle $\triangle 
    \widetilde{x}_j \widetilde{x}^* \widetilde{z}$ (see Definitions \ref{def: triangle condition} and \ref{def: angle condition}) results in
    \begin{align}\label{eq_eucli_compare}
        \gamma_j \leq \widetilde{\gamma}_j := \angle \widetilde{x}_j \widetilde{x}^* \widetilde{z}, \quad d(x_j, \alpha(t)) \leq \|\widetilde{x}_j - \widetilde{\alpha}(t)\|.
    \end{align}
    where $\widetilde{\alpha} = [\widetilde{x}^* \widetilde{z}]$. Now, since $\widetilde{\gamma}_j$ is the angle inside the triangle, $\widetilde{\gamma}_j < \pi$ holds. This implies $\cos(\gamma_j) \geq \cos(\widetilde{\gamma}_j)$.
    Plugging-in Equation \eqref{eq_eucli_compare} to Equation \eqref{eq_first_var} yields
    \begin{align*}
        \limsup_{t \rightarrow 0} \frac{F(\alpha(t)) - F(\alpha(0))}{t} \leq -\sum_{j=1}^{k}  \cos(\widetilde{\gamma}_j)\mathds{1}_{\{\widetilde{x}_{j} \neq \widetilde{x}^*\}} + \sum_{j=1}^{k}  \mathds{1}_{\{\widetilde{x}_{j} = \widetilde{x}^*\}} < -(1-\alpha) k \sqrt{1 - \frac{1}{C_{\alpha}^2}} + \alpha k \leq 0
    \end{align*}
    where the second inequality follows in the same manner as \citet{minsker2015geometric}[Lemma 2.1]; see Figure \ref{figure_npc_triangle} as well. Since it contradicts to Equation \eqref{eq_minimizer_condi}, we prove the claim.

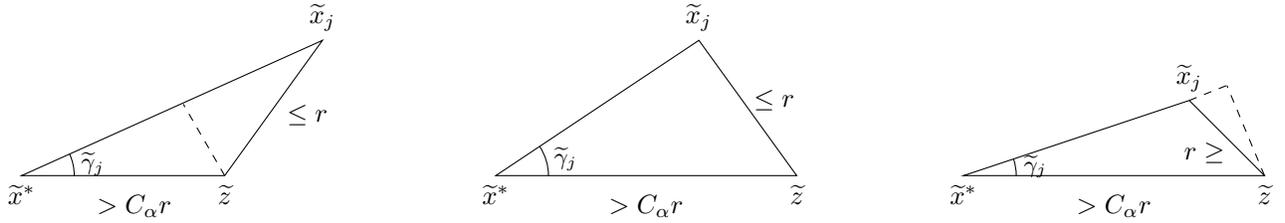
\begin{figure}[h] 
    \centering
    \begin{tikzpicture}[my angle/.style = {draw, angle radius=7mm, angle eccentricity=1.1, right, inner sep=1pt, font=\footnotesize}]
        \draw (0,0) coordinate[label=below:$\widetilde{x}^*$] (a) --
        (2.7,0) coordinate[label=below:$\widetilde{z}$] (c) --
        (4,1.8) coordinate[label=above:$\widetilde{x}_j$] (b) -- cycle;
        \pic[my angle, "$\widetilde{\gamma}_j$"] {angle = c--a--b};

        \node at (1.5, -0.4) {$> C_{\alpha} r$}; 
        \node at (3.8, 0.8) {$\leq r$};      

        \draw[dashed] (c) -- (2.15, 0.95); 
    \end{tikzpicture}\hfill
    \begin{tikzpicture}[my angle/.style = {draw, angle radius=7mm, angle eccentricity=1.1, right, inner sep=1pt, font=\footnotesize}]
        \draw (0,0) coordinate[label=below:$\widetilde{x}^*$] (a) --
        (4,0) coordinate[label=below:$\widetilde{z}$] (c) --
        (2.7,1.8) coordinate[label=above:$\widetilde{x}_j$] (b) -- cycle;
        \pic[my angle, "$\widetilde{\gamma}_j$"] {angle = c--a--b};

        \node at (2, -0.4) {$> C_{\alpha} r$}; 
        \node at (3.7, 1) {$\leq r$};      
    \end{tikzpicture}\hfill
    \begin{tikzpicture}[my angle/.style = {draw, angle radius=7mm, angle eccentricity=1.1, right, inner sep=1pt, font=\footnotesize}]
        \draw (0,0) coordinate[label=below:$\widetilde{x}^*$] (a) --
        (4,0) coordinate[label=below:$\widetilde{z}$] (c) --
        (3 , 1) coordinate[label=above:$\widetilde{x}_j$] (b) -- cycle;
        \pic[my angle, "$\widetilde{\gamma}_j$"] {angle = c--a--b};

        \node at (2, -0.4) {$> C_{\alpha} r$}; 
        \node at (3.2, 0.3) {$r \geq$};      
        \draw[dashed] (c) -- (3.5, 1.2); 
        \draw[dashed] (b) -- (3.5, 1.2); 
    \end{tikzpicture}
    \caption{Possible configurations of the triangle $\triangle \widetilde{x}^* \widetilde{x}_j \widetilde{z}$ for $j = 1, \dots \floor{(1-\alpha)k} + 1$. Clearly, the second case with the equality gives the tightest upper bound on $\sin(\widetilde{\gamma}_j)$, which is $1/C_{\alpha}$. This automatically yields the lower bound of the $\cos(\widetilde{\gamma}_j)$.}
    \label{figure_npc_triangle} 
\end{figure}

\textbf{CASE II: $\kappa > 0$.}

Since we assumed $x^*$ exists in this case, Equation \eqref{eq_minimizer_condi} holds. By the assumption, for any $x_j$, the triangle inequality implies
\[
    d(x^*, x_j) + d(x^*, z) + d(x_j, z) \leq 2 (d(x^*, x_j) + d(x^*, z) ) \leq 2 D_\kappa.
\]
Hence, $\{x^*, x_j, z\}$ can be embedded to $M^2_\kappa := \frac{1}{\sqrt{\kappa}} \mathbb{S}^2$. In other words, one can pick points $\widetilde{x}_j$'s, $\widetilde{x}^*$ and $\widetilde{z}$ on $M^2_\kappa$ such that any $\{\widetilde{x}^*, \widetilde{x}_j, \widetilde{z}\}$ forms the comparison triangle of $\{x^*, x_j, z\}$: see \citet{kunzinger2018alexndrov}[Proposition 4.2.20].

Applying the same method as in $\kappa \leq 0$ case, one can check
\[
    \limsup_{t \rightarrow 0} \frac{F(\alpha(t)) - F(\alpha(0))}{t} \leq -\sum_{j=1}^{k}  \cos(\widetilde{\gamma}_j)\mathds{1}_{\{\widetilde{x}_{j} \neq \widetilde{x}^*\}} + \sum_{j=1}^{k} \mathds{1}_{\{\widetilde{x}_{j} = \widetilde{x}^*\}}
\]
where $\widetilde{\gamma}_j$ is the angle at $\widetilde{x}^*$ of $\triangle \widetilde{x}^* \widetilde{x}_j \widetilde{z}$ on $M^2_\kappa$. 
Now, we apply Rauch comparison theorem on $M_{\kappa}^2$ \citep{kinzinger2014theexponential}[Theorem 3.1]. Denoting the intrinsic metric of $M_{\kappa}^2$ as $\widetilde{d}$, we obtain
\[
\frac{\sin \left(\sqrt{\kappa}R\right)}{\sqrt{\kappa}R} \norm{u} \leq \norm{d_{v}\exp_{\widetilde{x}^*}(u)}
\]
for any $\norm{v} \leq R < D_{\kappa}$. In fact, since $M_{\kappa}^2$ is $\bbS^{2}/\sqrt{\kappa}$, we can plug-in $R = D_{\kappa}/2$, the half of injectivity radius of $\bbS^{2}/\sqrt{\kappa}$. Under this choice of $R$, $B(\widetilde{x}^*, D_{\kappa}/2) \subset M_{\kappa}^2$ becomes convex. Therefore, the above estimate becomes global in such ball, so that
\[
\frac{2}{\pi} \norm{u - v}_{\widetilde{x}^*} \leq \widetilde{d}\left(\exp_{\widetilde{x}^*}(u), \exp_{\widetilde{x}^*}(v)\right).
\]
See \citet{fefferman2019reconstructioninterpolationmanifoldsi}[Equation (4.1), (4.2)] for more detail of this procedure. This results in the logarithmic map $\log_{\widetilde{x}^*}\cdot$ being $\pi/2$-Lipschitz in the ball $B(\widetilde{x}^*, D_{\kappa}/2)$. Since $\widetilde{x}_j, \widetilde{z} \in B(\widetilde{x}^*, D_{\kappa}/2)$, we have 
\[
\norm{\log_{\widetilde{x}^*} \widetilde{z} - \log_{\widetilde{x}^*} \widetilde{x}_j}_{\widetilde{x}^*} \leq \frac{\pi}{2} \widetilde{d}(\widetilde{z}, \widetilde{x}_j) \leq \frac{\pi}{2}r
\]
for $j = 1, \dots, \floor{(1-\alpha)k} + 1$. Therefore, by the considering the same Figure \ref{figure_npc_triangle} for the triangle constructed by $0, \log_{\widetilde{x}^*} \widetilde{x}_j$, and $\log_{\widetilde{x}^*} \widetilde{z}$ in the tangent space $T_{\widetilde{x}^*}M_{\kappa}^2$, $\sin(\widetilde{\gamma}_j)$ is upper bounded by $1/C_{\alpha}$ for $j = 1, \dots, \floor{(1-\alpha)k} + 1$ again. Therefore, the same procedure as in Case I yields
\[
    0 \leq \limsup_{t \rightarrow 0} \frac{F(\alpha(t)) - F(\alpha(0))}{t} \leq -\sum_{j=1}^{k}  \cos(\widetilde{\gamma}_j)\mathds{1}_{\{\widetilde{x}_{j} \neq \widetilde{x}^*\}} + \sum_{j=1}^{k} \mathds{1}_{\{\widetilde{x}_{j} = \widetilde{x}^*\}} < -(1-\alpha) k \sqrt{1 - \frac{1}{ C^2_{\alpha}}} + \alpha k \leq 0
\]
whenever $C_{\alpha} \geq (1-\alpha)\sqrt{\frac{1}{1-2\alpha}}$. This contradicts to Equation \eqref{eq_minimizer_condi}.
\end{proof}

\textbf{Proof of Theorem \ref{thm_concen_of_med}:}
\begin{proof}

For $\kappa \leq 0$, let $\calE:= \{d(\widehat{\theta}_{MoE}, \theta) > C_{\alpha} \epsilon\}$ the event. Then, under this event, by Lemma \ref{lem_geo_med} with $x^* = \widehat{\theta}_{MoE}$, $x_j = \widehat{\theta}_j$, and $z = \theta$, we have $J \subseteq \{1, \dots, k\}$ such that $|J| > \alpha k$ and $d(\widehat{\theta}_j, \theta) > \epsilon$ for $j \in J$. Let $W \sim B(k,p)$ be Binomial random variable. Then, 
\begin{align*}
    \bbP(\calE) \leq \bbP\left(\sum_{j=1}^{k} \mathds{1}_{\{d(\widehat{\theta}_j, \theta) > \epsilon\}} \geq \alpha k \right) \leq \bbP(W \geq \alpha k) \leq \exp\left(-k (1-\alpha) \log \frac{1-\alpha}{1-p} -k \alpha \log \frac{\alpha}{p}\right)
\end{align*}
where the second and the last inequalities follow from \citet{lerasle2011robust}[Lemma 23], and Chernoff bound respectively. 

The case for $\kappa > 0$ goes almost similarly, once one sets the event as $\calE:=\set{\pi C_{\alpha} \epsilon/2 < d(\widehat{\theta}_{FMoE}, \theta) \leq D_{\kappa}/2}$. The upper bound $d(\widehat{\theta}_{FMoE}, \theta) \leq D_{\kappa}/2$ vanishes by the assumption.
\end{proof}

Lastly, for the conditional probability in Remark \ref{remark_boosting_k_positive}, one can check
\[
d(\widehat{\theta}_{FMoE}, \theta) \leq \min_{j} \left[d(\widehat{\theta}_{FMoE}, \widehat{\theta}_j) + d(\widehat{\theta}_j, \theta)\right] \leq \frac{D_{\kappa}}{2} - \epsilon + \min_j d(\widehat{\theta}_j, \theta)
\]
almost surely. Now, from the assumption, 
\[
\bbP\left[\min_j d(\widehat{\theta}_j, \theta) \leq \epsilon \right] \geq 1 - p^{k}.
\]
Therefore, 
\[
\bbP \left[d(\widehat{\theta}_{FMoE}, \theta) \leq \frac{D_{\kappa}}{2}\right] \geq 1 - p^k.
\]
Then, the definition of the conditional probability leads to the claimed bound.

\subsection{Proofs in Section \ref{section_applications}} \label{appendix_proof_section_applications}

\textbf{Proof of Theorem \ref{thm_empirical_mean_mom}:}
\begin{proof}
From Proposition \ref{prop_expected_error_mean_estimators}, we obtain
    \[
    \bbE\left[d^2(\widehat{x}, x^*)\right] \leq \frac{\sigma^2}{\floor{n/k}} \leq \frac{2k\sigma^2}{n}.
    \]
Fixing $\alpha \in (0,0.5)$ and $p \in (0,\alpha)$, and then define $k := \floor{\log(1/\delta) / \psi(\alpha, p)} + 1$ and $\epsilon:= \sqrt{(2k\sigma^2) / (np)}$. Applying Markov inequality, one obtains
    \[
    \bbP\left[d(\widehat{x}, x^*) \geq \epsilon \right] \leq \frac{\bbE\left[d^2(\widehat{x}, x^*)\right]}{\epsilon^2} = \frac{2k\sigma^2}{n \epsilon^2} = p.
    \]
    Then, it follows from Theorem \ref{thm_concen_of_med} that 
    \[
    \bbP\left[d(\widehat{x}_{FMoM}, x^*) \geq C_{\alpha} \epsilon \right] \leq \exp\left(-k \psi(\alpha, p)\right) \leq \delta
    \]
by the construction of $k$. Observe that
    \[
    C_{\alpha} \epsilon = C_{\alpha} \sqrt{\frac{2k \sigma^2}{np}} = C_{\alpha} \sqrt{\frac{2 \sigma^2}{n p \psi(\alpha, p)}} \sqrt{k \psi(\alpha, p)} \leq C_{\alpha} \sqrt{\frac{2 \sigma^2}{n p \psi(\alpha, p)}} \sqrt{\psi(\alpha, p) + \log(1/\delta)}.
    \]
Since $\alpha$ and $p$ are arbitrary, any choice of $\alpha$ and $p$ will induce the bound. In fact, by minimizing the right-most above term with respect to $\alpha$ and $p$, we obtain the tighter bound. In this case, we plugged-in $p = 1/10$ and $\alpha = 7/18$, as suggested in \citet{minsker2015geometric}[Corollary 4.1].
\end{proof}

\textbf{Proof of Proposition \ref{prop_tail_of_cov_matrix}:}
\begin{proof}

\textbf{Case I: $d = d_{AI}$.}

We write $\Delta := \Sigma^{-1/2} \widehat{\Sigma} \Sigma^{-1/2} - I_d = \Sigma^{-1/2}(\widehat{\Sigma} - \Sigma)\Sigma^{-1/2}$, and let $\widetilde{\lambda}, \widetilde{\sigma}$ be the eigenvalues and singular values of $\Delta$. Note that the eigenvalues of $\Sigma^{-1/2} \widehat{\Sigma} \Sigma^{-1/2}$ are $\widetilde{\lambda} + 1$. 

Recall the inequalities between the spectral norm and the Frobenius norm:
\begin{equation}\label{eq: 2norm vs F_norm}
    \norm{\cdot}_{2} \leq \norm{\cdot}_{F} \leq \sqrt{d}\norm{\cdot}_{2}.
\end{equation}
Here, $\norm{\cdot}_F, \norm{\cdot}_{2}$ stand for Frobenius norm and spectral norm respectively. Equation \eqref{eq: 2norm vs F_norm} implies
\[
    \bbP\left[d_{AI}\left(\Sigma, \widehat{\Sigma}\right) \geq \epsilon\right] = \bbP\left[\norm{\log \Sigma^{-1/2} \widehat{\Sigma} \Sigma^{-1/2}}_{F} \geq \epsilon \right] \leq \bbP\left[\sqrt{d}\norm{\log \Sigma^{-1/2} \widehat{\Sigma} \Sigma^{-1/2}}_{2} \geq \epsilon \right].
\]
Note that if $\lambda$ is the eigenvalue of a positive semi-definite matrix $A$, then $\log \lambda$ is that of $\log A$. Hence,
\begin{align*}
    \bbP\left[\sqrt{d}\norm{\log \Sigma^{-1/2} \widehat{\Sigma} \Sigma^{-1/2}}_{2} \geq \epsilon \right] &= \bbP\left[ \max_{j \leq d} \abs{\log(\widetilde{\lambda}_j + 1)} \geq \frac{\epsilon}{\sqrt{d}} \right] \notag \\
    &= \bbP \left[\left\{\max_{j \leq d} \log (\widetilde{\lambda}_j + 1) \geq \frac{\epsilon}{\sqrt{d}} \right\} \cup \left\{\min_{j \leq d} \log (\widetilde{\lambda}_j + 1) \leq -\frac{\epsilon}{\sqrt{d}} \right\} \right] \notag\\
    &\leq \bbP \left[ \widetilde{\lambda}_{\max} \geq e^{\frac{\epsilon}{\sqrt{d}}} - 1 \right] + \bbP\left[ \widetilde{\lambda}_{\min} \leq e^{-\frac{\epsilon}{\sqrt{d}}} -1 \right] \notag\\
    &\leq \bbP\left[\widetilde{\sigma}_{\max} \geq 1 - e^{-\frac{\epsilon}{\sqrt{d}}}\right]
\end{align*}
where the last inequality follows from the fact that $\cosh(x) \geq 0$ and singular value bounds the absolute value of eigenvalues. Observe the submultiplicativity of singular values
\begin{equation}\label{eq: submulti singular value}
    \sigma_{\max}(ABC) \leq \sigma_{\max}(A) \sigma_{\max}(B) \sigma_{\max}(C).
\end{equation}
Taking $A, C = \Sigma^{-1/2}$ and $B = \widehat{\Sigma} - \Sigma$, Equation \eqref{eq: submulti singular value} leads to
\begin{align*}
    \bbP\left[\widetilde{\sigma}_{\max} \geq 1 - e^{-\frac{\epsilon}{\sqrt{d}}}\right] &\leq \bbP\left[\lambda_{\max}(\Sigma^{-1/2})^2 \lambda_{\max}\left(\abs{\widehat{\Sigma} - \Sigma}\right) \geq 1 - e^{-\frac{\epsilon}{\sqrt{d}}}\right]\\
    &= \bbP\left[\lambda_{\max}\left(\abs{\widehat{\Sigma} - \Sigma}\right) \geq \lambda_{\min}\left(1 - e^{-\frac{\epsilon}{\sqrt{d}}}\right)\right]\\
    &= \bbP \left[\norm{\widehat{\Sigma} - \Sigma}_{2} \geq \lambda_{\min}\left(1 - e^{-\frac{\epsilon}{\sqrt{d}}}\right)\right]\\
    &\leq \bbP\left[\norm{\widehat{\Sigma} - \Sigma}_{F} \geq \lambda_{\min}\left(1 - e^{-\frac{\epsilon}{\sqrt{d}}}\right)\right]
\end{align*}
where, again, the last inequality follows from Equation \eqref{eq: 2norm vs F_norm}. Combining the above argument, 
\begin{equation}\label{eq: SPD eq1}
    \bbP\left[d_{AI}\left(\Sigma, \widehat{\Sigma}\right) \geq \epsilon\right] \leq \bbP\left[\norm{\widehat{\Sigma} - \Sigma}_{F} \geq \lambda_{\min} \left(1 - e^{-\frac{\epsilon}{\sqrt{d}}}\right)\right].
\end{equation}

Now, writing $X_i = (X_i^1, \dots, X_i^d) \in \bbR^d$, it follows that
    \begin{align*}
    \bbP\left[\norm{\widehat{\Sigma} - \Sigma}_{F} \geq \lambda_{\min}\left(1 - e^{-\frac{\epsilon}{\sqrt{d}}}\right)\right] &= \bbP\left[\sum_{k, l=1}^{d} \abs{\frac{1}{n} \sum_{i = 1}^{n} X_{i}^k X_{i}^l - \bbE(X^k X^l)}^2 \geq \lambda_{\min}^2\left(1 - e^{-\frac{\epsilon}{\sqrt{d}}}\right)^2\right]\\
    &\leq \sum_{k,l = 1}^{d} \bbP\left[\abs{\frac{1}{n} \sum_{i = 1}^{n} X_{i}^k X_{i}^l - \bbE(X^k X^l)}^2 \geq \frac{\lambda_{\min}^2}{d^2}\left(1 - e^{-\frac{\epsilon}{\sqrt{d}}}\right)^2\right]\\
    &\leq \sum_{k,l=1}^{d} \frac{d^2 \bbE\abs{\frac{1}{n} \sum_{i = 1}^{n} X_{i}^k X_{i}^l - \bbE(X^k X^l)}^2}{\lambda_{\min}^2\left(1 - \exp\left(-\frac{\epsilon}{\sqrt{d}}\right)\right)^2}\\
    &= \sum_{k,l=1}^{d} \frac{d^2  \sum_{i,j = 1}^{n} \bbE \left( X_{i}^k X_{i}^l X_{j}^k X_{j}^l \right) - [\bbE(X^k X^l)]^2}{n^2\lambda_{\min}^2 \left(1 - \exp\left(-\frac{\epsilon}{\sqrt{d}}\right)\right)^2}
    \end{align*}
where thee second last inequality above follows by Markov inequality. Since $X_i$ and $X_j$ are independent,
\begin{align*}
    \sum_{k,l=1}^{d} \frac{d^2  \sum_{i,j = 1}^{n} \bbE \left( X_{i}^k X_{i}^l X_{j}^k X_{j}^l \right) - [\bbE(X^k X^l)]^2}{n^2\lambda_{\min}^2 \left(1 - \exp\left(-\frac{\epsilon}{\sqrt{d}}\right)\right)^2} &= \sum_{k,l=1}^{d} \frac{d^2  \sum_{i = 1}^{n} \bbE [\left( X_{i}^k X_{i}^l\right)^2] - [\bbE(X^k X^l)]^2}{n^2\lambda_{\min}^2\left(1 - \exp\left(-\frac{\epsilon}{\sqrt{d}}\right)\right)^2}.
\end{align*}
Therefore,
\begin{align*}
    \bbP\left[\norm{\widehat{\Sigma} - \Sigma}_{F} \geq \lambda_{\min}(\Sigma)\left(1 - e^{-\frac{\epsilon}{\sqrt{d}}}\right)\right] &\leq \sum_{k,l=1}^{d} \frac{d^2  \sum_{i = 1}^{n} \bbE [\left( X_{i}^k X_{i}^l\right)^2] - [\bbE(X^k X^l)]^2}{n^2\lambda_{\min}^2 \left(1 - \exp\left(-\frac{\epsilon}{\sqrt{d}}\right)\right)^2}\\
    &= \frac{d^2}{n \lambda_{\min}^2\left(1 - \exp\left(-\frac{\epsilon}{\sqrt{d}}\right)\right)^2} \sum_{k,l=1}^{d} Var(X^k X^l).
\end{align*}
Since we assumed $P \in \calP_4(\bbR^d)$, we have $\sum_{k,l=1}^{d} Var(X^k X^l) \leq C d^2$ for some constant $C > 0$. The conclusion follows.

\textbf{Case II: $d = d_{BW}$.}

Using the fact that eigenvalues of $\widehat{\Sigma}, \Sigma$ are lower bounded by $\lambda_0$, the following estimate for Bures-Wasserstein distance holds:
\[
d_{BW}^2(\widehat{\Sigma}, \Sigma) \overset{\text{(i)}}{=} \min_{U: \text{Unitary}}\norm{\widehat{\Sigma}^{1/2} - \Sigma^{1/2}U}_F^2 \leq \norm{\widehat{\Sigma}^{1/2} - \Sigma^{1/2}}_F^2 \overset{\text{(ii)}}{\leq} \frac{1}{4 \lambda_0} \norm{\widehat{\Sigma} - \Sigma}_{F}^2
\]
where (i) and (ii) are from \citet{bhatia2019bureswasserstein}[Theorem 1] and \citet{ando1980inequality}[Proposition 3.2] respectively.

Hence, we get
\[
\bbP\left[d_{BW}\left(\widehat{\Sigma}, \Sigma \right) \geq \epsilon \right] \leq 
\bbP\left[\norm{\widehat{\Sigma} - \Sigma}_{F} \geq 2 \sqrt{\lambda_0} \epsilon \right] \leq \frac{C d^4}{4 n  \lambda_0 \epsilon^2}
\]
where the last inequality can be obtained similarly to the last step in Case I.
\end{proof}

\textbf{Proof of Theorem \ref{thm_exp_conc_cov_mat}:}
\begin{proof}
    We follow the same technique as in Theorem \ref{thm_empirical_mean_mom}. We fix $\alpha$ and $p$ and then set $k = \floor{\log(1/\delta)/\psi(\alpha, p)} + 1$. 

    \textbf{Case I: $d = d_{AI}$.}
    
    From Proposition \ref{prop_tail_of_cov_matrix}:
    \[
    \bbP\left[d_{AI}\left(\widehat{\Sigma}, \Sigma \right) \geq \epsilon \right] \leq \frac{2 k C d^4}{n \lambda_{\min}^2 \left(1 - \exp\left(-\frac{\epsilon}{\sqrt{d}}\right)\right)^2}:= p
    \]
    by choosing 
    \[
    \epsilon = -\sqrt{d} \log \left(1 - \frac{\sqrt{2 k C} d^2}{\lambda_{\min}\sqrt{np}}\right)
    \]
    which is always possible from the condition $n > 2 k C d^4 / \lambda_{\min}^2$.
    Then, we apply Theorem \ref{thm_concen_of_med} to yield
    \[
    \bbP\left[d_{AI}\left(\widehat{\Sigma}_{FMoE}, \Sigma\right) \geq C_{\alpha} \epsilon\right] \leq \exp \left(- k \psi(\alpha, p)\right) \leq \delta
    \]
    by the setting of $k$. Observe
    \begin{align*}
        C_{\alpha} \epsilon &= -\sqrt{d} C_{\alpha} \log \left(1 - \frac{\sqrt{2 k C} d^2}{\lambda_{\min}\sqrt{np}}\right) \leq -\sqrt{d} C_{\alpha} \log \left(1 - \frac{\sqrt{2 C} d^2}{ \lambda_{\min}\sqrt{n p \psi(\alpha, p)}} \sqrt{\psi(\alpha, p) + \log(1/\delta)} \right).
    \end{align*}
    Again, this bound holds for all $\alpha \in (0,0.5)$ and $p \in (0, \alpha)$. Plugging-in $\alpha = 0.4$ and $p = 0.1$ yields the claimed result.

    \textbf{Case II: $d = d_{BW}$.} 

    For $d_{BW}$, one can employ the similar technique as in the above with $\epsilon = d^2 \sqrt{(k C)/(2 n \lambda_0 p)}$. We used the same $\alpha = 0.4$ and $p = 0.1$. The condition $n > 6 \lambda_0 k C d^4$ was imposed to guarantee $\epsilon < D_{\kappa}/(\pi C_{\alpha})$ for such choice of $\alpha$ and $p$.
\end{proof}

\textbf{The precise statement of Remark \ref{rmk_eigenvalue_lower_bound_concentration}:}



Notice that
\[
    | \lambda_{\min}(\widehat{\Sigma}) - \lambda_{\min}(\Sigma) | \leq \| \widehat{\Sigma} - \Sigma\|_2
\]
from the Weyl's inequality. Then, again applying Equation \eqref{eq: 2norm vs F_norm} and the calculation on the Frobenius norm bound in the proof of Proposition \ref{prop_tail_of_cov_matrix} lead to
\[
\bbP\left[\abs{\lambda_{\min}(\widehat{\Sigma}) - \lambda_{\min}(\Sigma)} \geq \epsilon \right] \leq \bbP\left[\norm{\widehat{\Sigma} - \Sigma}_{2} \geq \epsilon \right] \leq \bbP\left[\norm{\widehat{\Sigma} - \Sigma}_{F} \geq \epsilon \right] \leq \frac{C d^4}{n \epsilon^2}.
\]
Here, $C$ is the same $C$ in Proposition \ref{prop_tail_of_cov_matrix}.
Writing $\delta := C d^4/(n \epsilon^2)$, one gets
\[
\bbP\left[\lambda_{\min}(\widehat{\Sigma}) \in B\left(\lambda_{\min}(\Sigma), d^2 \sqrt{\frac{C}{n \delta}}\right)\right] \geq 1- \delta.
\]
This implies $\lambda_{\min}(\widehat{\Sigma})$ concentrates in $B(\lambda_{\min}(\Sigma), C/\sqrt{n})$ for some universal $C > 0$ with high probability whenever $P$ allows the finite fourth moment. 

\section{Implementation detail and additional experiments}\label{appendix_additional_experiment}

This section includes implementation detail and more experiments of our algorithm under different settings. First, for all experiments in the main paper, we show how the performance of our method varies by the block size and the population distribution. In addition, for Fr\'echet mean estimation problem, we conduct the additional experiment on a Poincar\'e disk model to verify our method works in various domains. All new experiments in this Appendix were conducted 100 times, while we maintained with the results in Section \ref{section_implementation_experi} for the same one. We found this number of simulation to be sufficient to obtain the coherent results.

\subsection{Implementation detail} \label{appendix_implementation_detail}

For implementations, we used Python package \texttt{Geomstats} \citep{miolane2020geomstats, miolane2024geomstats} to model particular CAT($\kappa$) spaces and compute the geometric quantities like metrics and geodesics. For the Fr\'echet mean estimation problem in a NPC space, Fr\'echet mean and median were implemented using inductive mean and \citet{bacak2014computing}[Algorithm 4.3]. For the covariance estimation problem, Fr\'echet mean and median were computed using predefined functions in \texttt{Geomstats} (which are based on subgradient methods). All experiments were performed on a free version of Google Colab without any use of GPU. For each task, running 100 simulations did not take more than 5 minutes.

\subsection{Effects of the block sizes and population distributions}

We first analyze the effect of the block size and the population distributions. For 5-legs spider tree, we used same $n = 100$, and varied the block size from $k = 1$ to $100$. Note that $k = 1$ case coincides to the original estimator within $n$ samples. For the population distribution, we maintained to use $P = \text{Unif}(1, \dots, 5) \times \left((1-\alpha) \abs{N(1,1)} + \alpha \abs{N(100,1)}\right)$, while varying $\alpha$ to observe the effect of the tail of the population distribution. The results are summarized in Table \ref{table_experi_appendix_5_leg_error} and \ref{table_experi_appendix_5_leg_ci}.

\begin{table}[!ht]
\caption{$\bbE d^2(\widehat{x}_{FMoE}, x^*)$ from 100 simulations in 5-legs spider with $\alpha$-portion outliers.} \label{table_experi_appendix_5_leg_error}
\begin{center}
\begin{tabular}{c|ccccc}
$\alpha$ & $k = 100$ & $k = 50$ &$k = 10$ & $k = 5$ & $k = 1$\\
\hline 
0 & $9.4 \times 10^{-9}$ & $3.4 \times 10^{-8}$ & $1.1 \times 10^{-5}$ & 0.0002 & 0.0003\\
0.1 & $8.6 \times 10^{-9}$ & $3.1 \times 10^{-8}$ & $1.2 \times 10^{-5}$ & 0.0105 & 3.1244\\
0.5 & $1.1 \times 10^{-8}$ & $4.3 \times 10^{-8}$ & 0.0223 & 0.0313 & 3.4634\\
0.9 & 0.0118 & 0.3560 & 0.0127 & 0.0193 & 3.8080
\end{tabular}
\end{center}
\end{table}

\begin{table}[!ht]
\caption{95\% confidence interval of $d(\widehat{x}_{FMoE}, x^*)$ from 100 simulations in 5-legs spider with $\alpha$-portion outliers.} \label{table_experi_appendix_5_leg_ci}
\begin{center}
\begin{tabular}{c|ccccc}
$\alpha$ & $k = 100$ & $k = 50$ &$k = 10$ & $k = 5$ & $k = 1$\\
\hline 
0 & $[2.6 \times 10^{-5}, 0.0001]$ & $[1.5 \times 10^{-5}, 0.0002]$ & $[6.9 \times 10^{-5}, 0.0127]$ & $[7.1 \times 10^{-5}, 0.0485]$ & $[0.0005, 0.0415]$\\
0.1 & $[8.2 \times 10^{-6}$, $0.0001]$ & $[1.1 \times 10^{-5}, 0.0002]$ & $[9.6 \times 10^{-5}, 0.0086]$ & $[7.3 \times 10^{-7}, 0.0711]$ & $[0.0110, 4.4202]$\\
0.5 & $[4.5 \times 10^{-5}, 0.0001]$ & $[0.0001, 0.0002]$ & $[0.0004, 0.2928]$ & $[5.5 \times 10^{-8}, 0.6888]$ & $[0.1158 4.5415]$\\
0.9 & $[8.1 \times 10^{-5}, 0.0004]$ & $[3.4 \times 10^{-5}, 2.5411]$ & $[0.0009, 0.1423]$ & $[7.3 \times 10^{-8}, 0.6190]$ & $[0.0566, 4.9039]$
\end{tabular}
\end{center}
\end{table}

For covariance estimation problem, we again used the same $d = 10$ and $n = 10d^4$, while varying $k = 1$ to $100$. We used the same procedure to generate the $\Sigma$, and then experimented on $t_{\nu}(0,
\Sigma)$ for different $\nu$ to observe the effect of the tail of the distribution. As $\nu \rightarrow 2$, distributions will impose a heavier tail. The results for covariance estimation problem are summarized in Table \ref{table_experi_appendix_cov_ai}, \ref{table_experi_appendix_cov_ai_ci}, \ref{table_experi_appendix_cov_bw}, and \ref{table_experi_appendix_cov_bw_ci}. 



In all experiments, when the tail is relatively light ($\alpha = 0$, $\nu = 5$), the original estimators ($k = 1$) are comparable to our proposed estimators. When the tail is very heavy ($\alpha = 0.9$, $\nu = 2.2$), interesting behaviors emerge. In these cases, the proper choices of the block size ($k = 5, 10$) yield higher accuracy as well as stronger concentration, which are in agreement with results in Section \ref{section_implementation_experi}. However, one can observe that performances worsen if the block sizes are set too large ($k = 50$ for spider, and  $k = 100$ for the covariance estimation). This observation aligns with the discussion in the beginning of Section \ref{section_implementation_experi}; if the block size is too large, the original estimator may not perform well within the subset size of $\floor{n/k}$, and our method may not work properly in such cases. 

\begin{table}[!ht]
\caption{$\bbE d_{AI}^2(\widehat{\Sigma}_{FMoE}, \Sigma)$ from 100 simulations for covariance estimation on $t_{\nu}(0,\Sigma)$.} \label{table_experi_appendix_cov_ai}
\begin{center}
\begin{tabular}{c|cccccc}
$\nu$ & $k = 100$ & $k = 10$ & $k = 5$ & $k = 1$\\
\hline 
2.2 & 6.5528 & 3.4604 & 2.9163 & 3.3515 \\
2.5 & 0.9739 & 0.3454 & 0.2931 & 0.6057\\
3 & 0.1327 & 0.0377 & 0.0387 & 0.1141\\
5 & 0.0055 & 0.0032 & 0.0032 & 0.0034\\
\end{tabular}
\end{center}
\end{table}

\begin{table}[!ht]
\caption{95\% confidence interval of $d_{AI}(\widehat{\Sigma}_{FMoE}, \Sigma)$ from 100 simulations for covariance estimation on $t_{\nu}(0,\Sigma)$.} \label{table_experi_appendix_cov_ai_ci}
\begin{center}
\begin{tabular}{c|cccccc}
$\nu$ & $k = 100$ & $k = 10$ & $k = 5$ & $k = 1$\\
\hline 
2.2 & $[1.2948, 2.3295]$ & $[1.6055, 2.0854]$ & $[1.3301, 1.9865]$ & $[1.2816, 3.3372]$ \\
2.5 & $[0.9035, 1.0604]$ & $[0.4239, 1.7378]$ & $[0.4132, 0.6792]$ & $[0.4266, 1.6865]$\\
3 & $[0.3027, 0.4070]$ & $[0.1422, 0.244011]$ & $[0.1485, 0.2562]$ & $[0.1488, 0.8103]$\\
5 & $[0.0527, 0.0934]$ & $[0.0445,  0.0666]$ & $[0.0447, 0.0686]$ & $[0.0444, 0.0756]$\\
\end{tabular}
\end{center}
\end{table}

\begin{table}[!ht]
\caption{$\bbE d_{BW}^2(\widehat{\Sigma}_{FMoE}, \Sigma)$ from 100 simulations for covariance estimation on $t_{\nu}(0,\Sigma)$.} \label{table_experi_appendix_cov_bw}
\begin{center}
\begin{tabular}{c|cccccc}
$\nu$ & $k = 100$ & $k = 10$ & $k = 5$ & $k = 1$\\
\hline 
2.2 & 61.0520 & 36.6296 & 31.5038 & 76.8935\\
2.5 & 4.6620 & 2.0343 & 1.7360  & 8.3281\\
3 & 0.3703 & 0.1407 & 0.1484 & 0.4993\\
5 & 0.0082 & 0.0065 & 0.0067 & 0.0074 \\
\end{tabular}
\end{center}
\end{table}

\begin{table}[!ht]
\caption{95\% confidence interval of $d_{BW}(\widehat{\Sigma}_{FMoE}, \Sigma)$ from 100 simulations for covariance estimation on $t_{\nu}(0,\Sigma)$.} \label{table_experi_appendix_cov_bw_ci}
\begin{center}
\begin{tabular}{c|cccccc}
$\nu$ & $k = 100$ & $k = 10$ & $k = 5$ & $k = 1$\\
\hline 
2.2 & $[6.5140, 8.8935]$& $[4.6619, 7.2466]$ & $[4.2740, 6.8831]$ & $ [4.0157, 25.6891]$ \\
2.5 & $[1.8019, 2.5497]$ & $[1.0648, 1.8412]$ & $[0.9443, 1.7409]$ & $[0.9936, 5.8796]$\\
3 & $[0.4492, 0.7285]$ & $[0.2632, 0.4909]$ & $[0.2631, 0.5110]$ & $[0.2777, 1.6146]$\\
5 & $[0.0680, 0.1222]$ & $[0.0600, 0.1078]$ & $[0.0584, 0.1053]$ & $[0.0578, 0.1126]$\\
\end{tabular}
\end{center}
\end{table}

\subsection{Fr\'echet mean estimation in Poincar\'e disk model}

Lastly, to verify our method works in various domains, we conduct the Fr\'echet mean estimation in Poincar\'e disk model, a widely used space for hierarchical model due to the pioneer work of \citet{nickel2017poincare}. Poincar\'e disk is a 2-dimensional Riemannian manifold with nonpositive sectional curvature (therefore a CAT($\kappa$) space) which can be embedded in the unit ball of $\bbR^2$. A Riemannian metric tensor of Poincar\'e disk is defined by the following formula:
\[
ds^2 = \frac{d x^2 + d y^2}{\left(1 - x^2 - y^2\right)^2}.
\]
A Poincar\'e disk can be embedded into an open Euclidean unit ball in $\bbR^2$. Specifically, we can construct a Poincar\'e disk using a method similar to the stereographic projection of a sphere. Consider the upper hyperboloid described by the equation $t^2 = x^2 + y^2 + 1$ for $ t > 1$. We can project this hyperboloid from the point $(t = -1, x= 0, y = 0)$ onto a unit disk at $t = 0$. This projection maps points on the hyperboloid to the unit disk, creating the Poincar\'e disk model. A distance between two points is given by the Euclidean length of the hyperbolic arc between corresponding points. Figure \ref{figure_poincare_ball_plot} illustrates the Poincar\'e disk. Intuitively, points closer to the boundary will have larger distances. 

\begin{figure*}[!ht]
\centering
\includegraphics[width=0.7\textwidth]{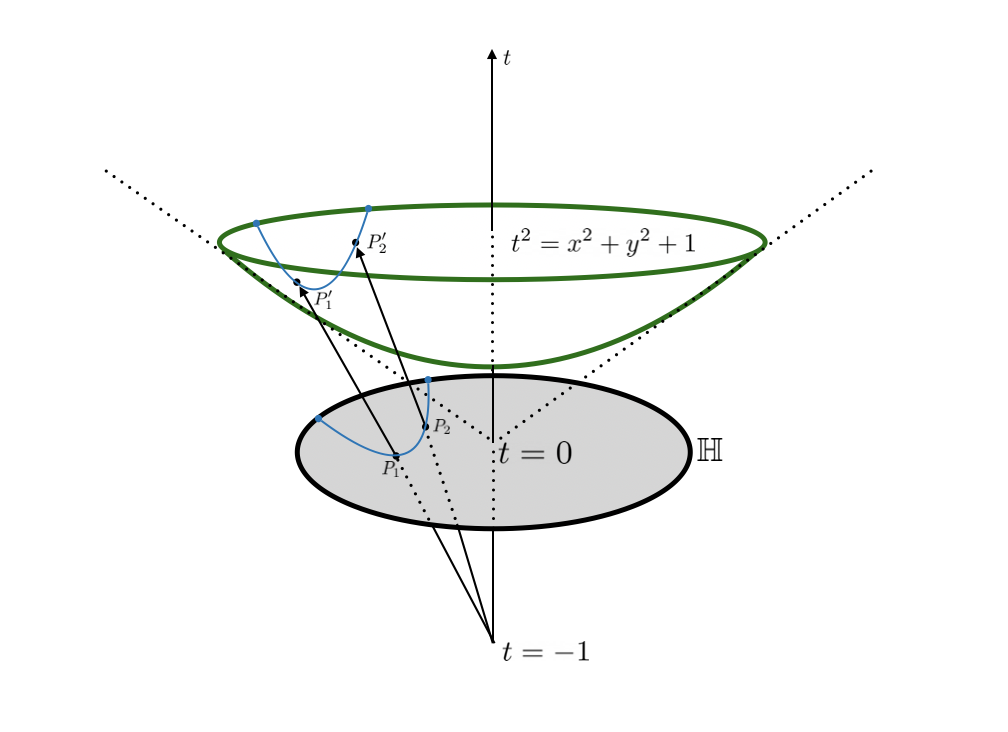}
\caption{The illustration of the Poincar\'e disk model. Poincar\'e disk model is defined by the intersection of the unit disk, e.g., $P_1, P_2$, and the projection map from the point $(t = -1, x = 0, y = 0)$ to the upper hyperboloid, e.g., $P_1', P_2'$. The distance between $P_1$ and $P_2$ is determined by the Euclidean length of the hyperbolic arc connecting their corresponding points $P_1'$ and $P_2'$.}\label{figure_poincare_ball_plot}
\end{figure*}

There are extensive theories regarding hyperbolic geometry and the Poincar\'e disk model. However, since these topics are not our primary interest, we refer interested readers to \citet{anderson2005hyperbolic}[Chapter 4.1]. Instead, we focus on numerically validating our method for estimating the Fréchet mean in the setting of heavy-tailed distributions in the Poincar\'e disk.

For the population distribution, we used the following mixture distribution in Poincare disk: 
\[
P = (1-\alpha) N(0, 0.2^2)\bigg\vert_{B(0,1)} + \alpha \text{Unif}\left((1-10^{-7})\bbS^1\right).
\]
Here, $N(0, 0.2^2)\bigg\vert_{B(0,1)}$ means the distribution of projected Gaussian random variable to the unit ball of Euclidean space. This distribution has Fr\'echet mean at the origin (due to its symmetricity), but it has a very heavy tail in Poincar\'e disk due to the effect of the outlier quantities, which are from the uniform distribution around the boundary. $\alpha$ again denotes the portion of outliers. We used the sample size $n = 100$, and experimented by changing the block size $k$ and the portion of outliers $\alpha$. The results are summarized in Table \ref{table_experi_appendix_poincare_error}, \ref{table_experi_appendix_poincare_ci}, and Figure \ref{figure_poincare_ball_ci}, \ref{figure_poincare_ball_res}.

\begin{table}[!ht]
\caption{$\bbE d^2(\widehat{x}_{FMoE}, x^*)$ from 100 simulations in Poincar\'e disk with $\alpha$-portion outliers.} \label{table_experi_appendix_poincare_error}
\begin{center}
\begin{tabular}{c|cccc}
$\alpha$ & $k = 50$ &$k = 10$ & $k = 5$ & $k = 1$\\
\hline 
0 & 0.0035 & 0.0047 & 0.0038 & 0.0028\\
0.1 & 0.0058 & 0.0323 & 0.1447 & 0.1063\\
0.5 & 0.0195 & 0.1553 & 0.1937 & 0.1709 \\
0.9 & 0.0667 & 0.1776 & 0.2320 & 0.1747 
\end{tabular}
\end{center}
\end{table}

\begin{table}[!ht]
\caption{95\% confidence interval of $d(\widehat{x}_{FMoE}, x^*)$ from 100 simulations in Poincar\'e disk with $\alpha$-portion outliers.} \label{table_experi_appendix_poincare_ci}
\begin{center}
\begin{tabular}{c|cccc}
$\alpha$ & $k = 50$ &$k = 10$ & $k = 5$ & $k = 1$\\
\hline 
0 & $[0.0157, 0.1032]$ & $[0.0081, 0.1358]$ & $[0.0111, 0.1198]$& $[0.0107, 0.0935]$\\
0.1 & $[0.0197, 0.1397]$ & $[0.0401, 0.4282]$ & $[0.0504, 0.7766]$ & $[0.0610, 0.6266]$\\
0.5 & $[0.0216, 0.2845]$ & $[0.0809, 0.7567]$ & $[0.0703, 0.8623]$ & $[0.0852,  0.7389]$\\
0.9 & $[0.0261, 0.5159]$ & $[0.0565, 0.7992]$ & $[0.1253, 0.8588]$ & $[0.0772,  0.8063]$
\end{tabular}
\end{center}
\end{table}


\begin{figure*}[!ht]
\centering
\includegraphics[width=0.33\textwidth]{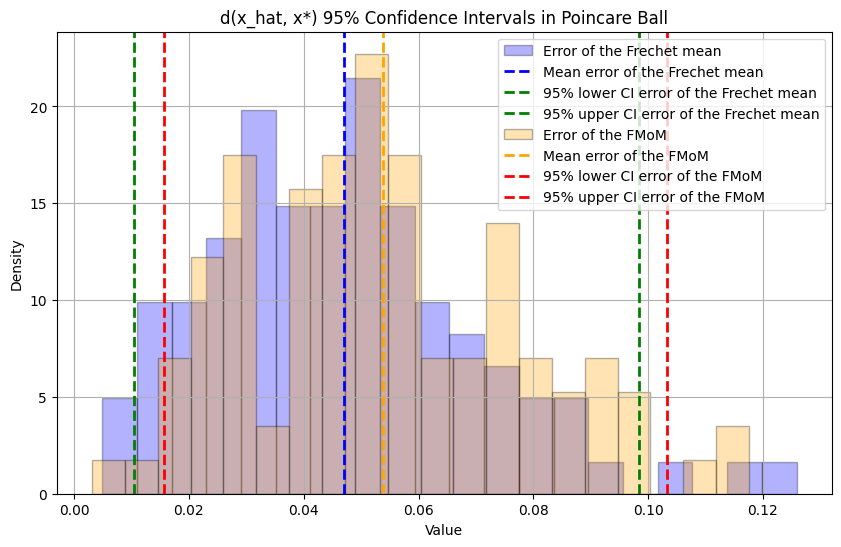}\hfill
\includegraphics[width=0.33\textwidth]{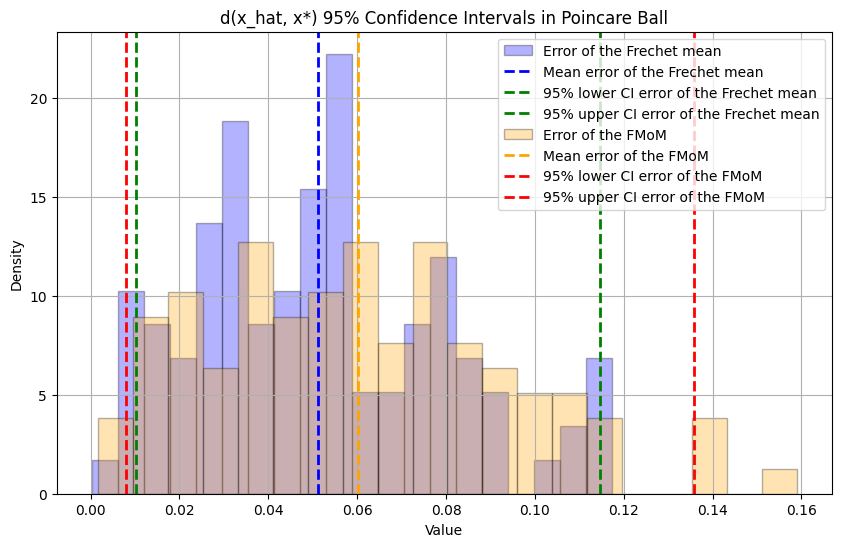}\hfill
\includegraphics[width=0.33\textwidth]{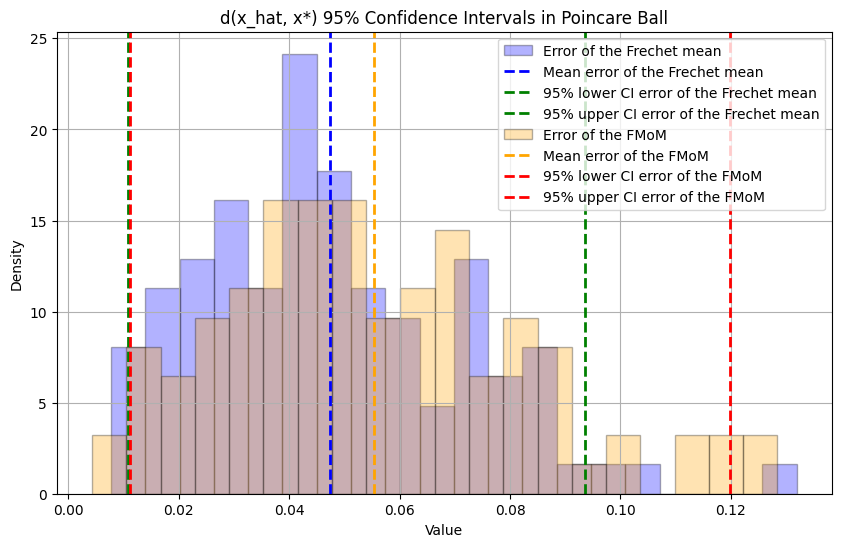}\\
\includegraphics[width=0.33\textwidth]{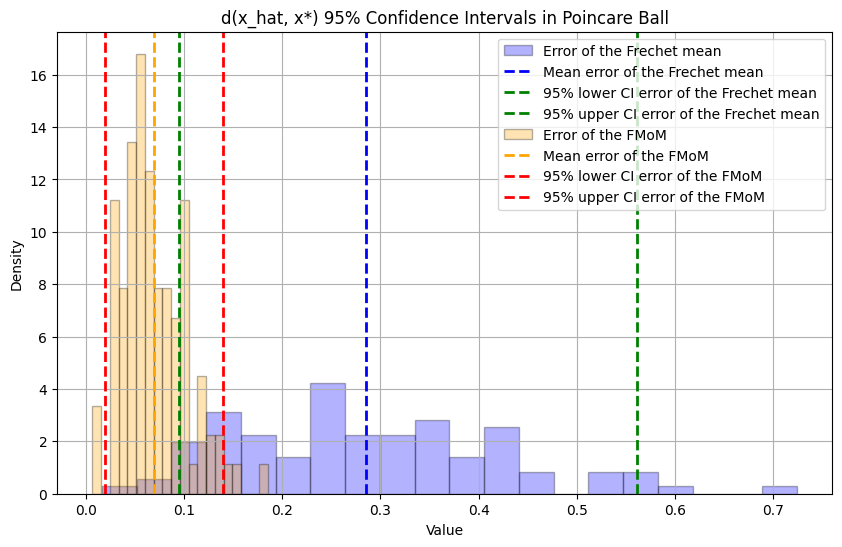}\hfill
\includegraphics[width=0.33\textwidth]{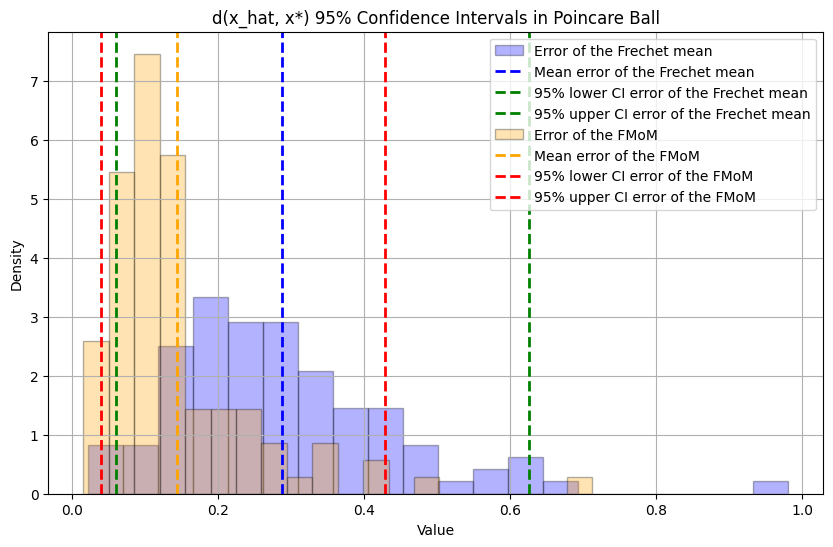}\hfill
\includegraphics[width=0.33\textwidth]{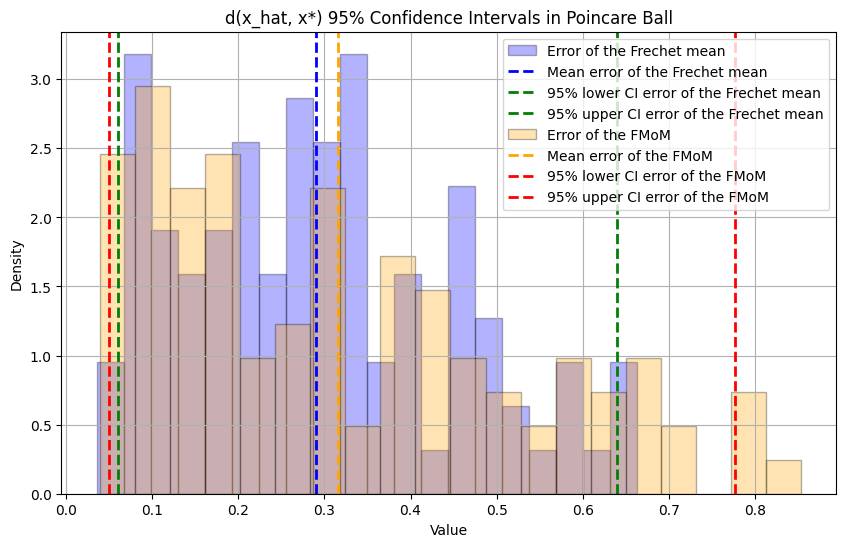}\\
\includegraphics[width=0.33\textwidth]{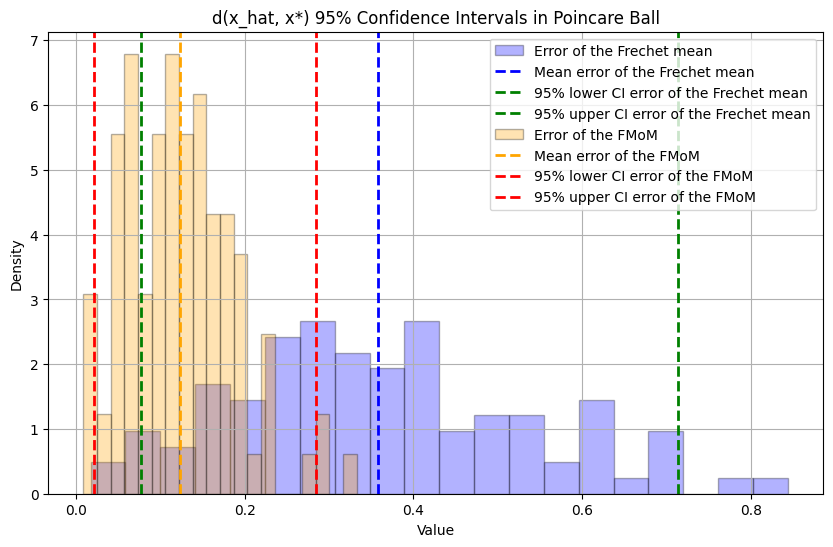}\hfill
\includegraphics[width=0.33\textwidth]{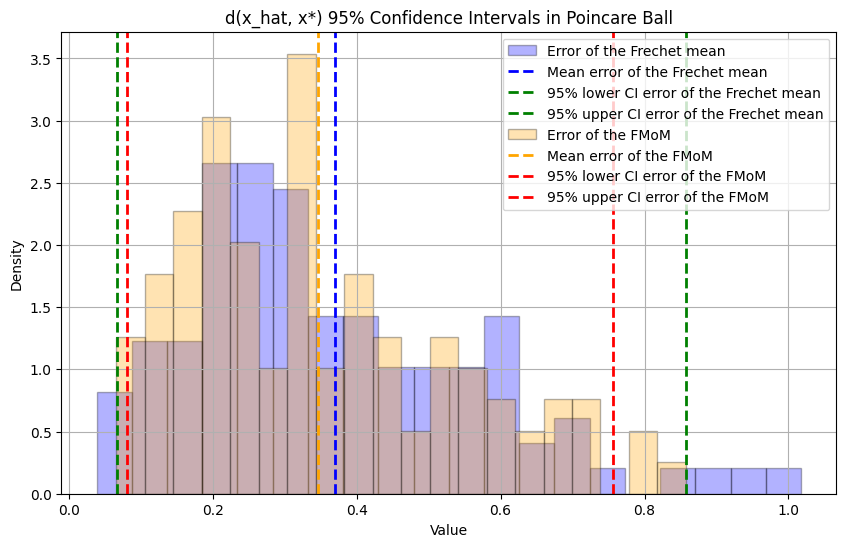}\hfill
\includegraphics[width=0.33\textwidth]{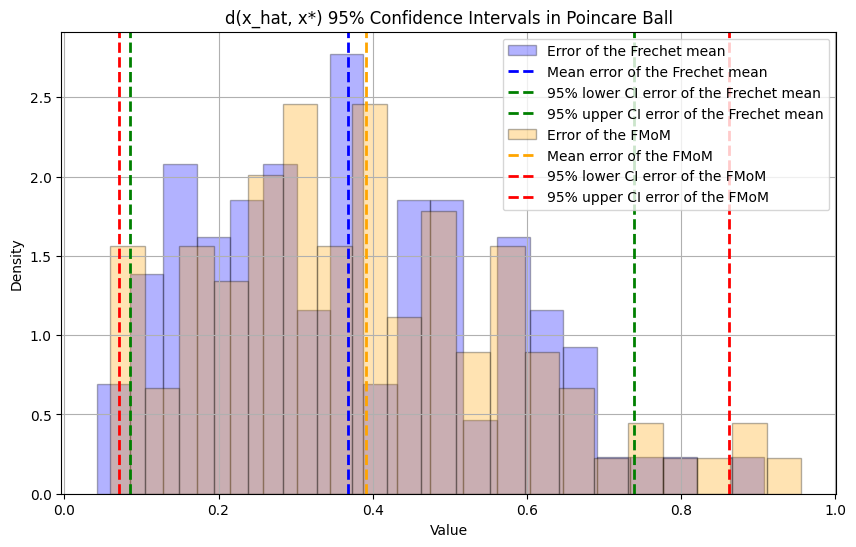}\\
\includegraphics[width=0.33\textwidth]{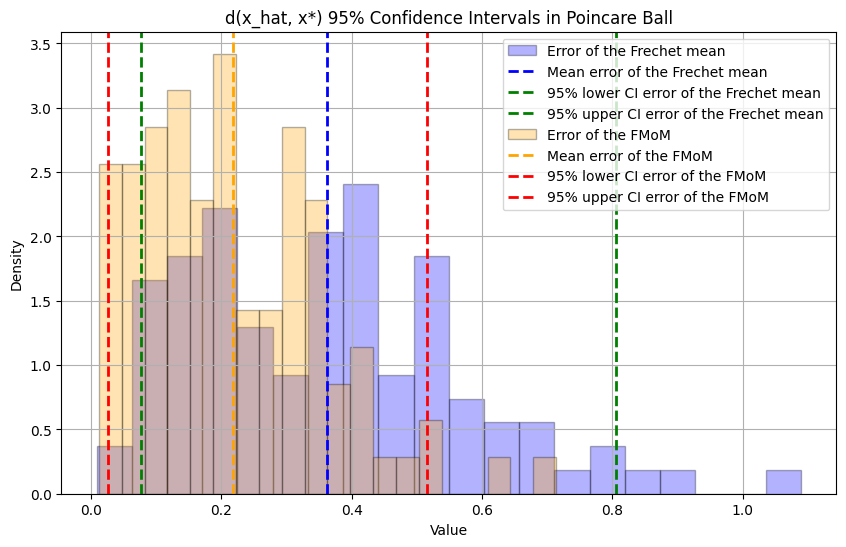}\hfill
\includegraphics[width=0.33\textwidth]{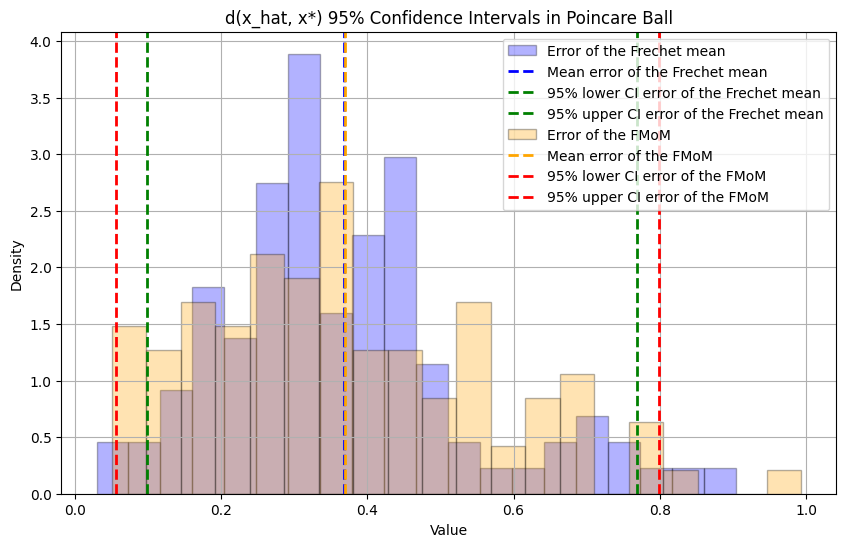}\hfill
\includegraphics[width=0.33\textwidth]{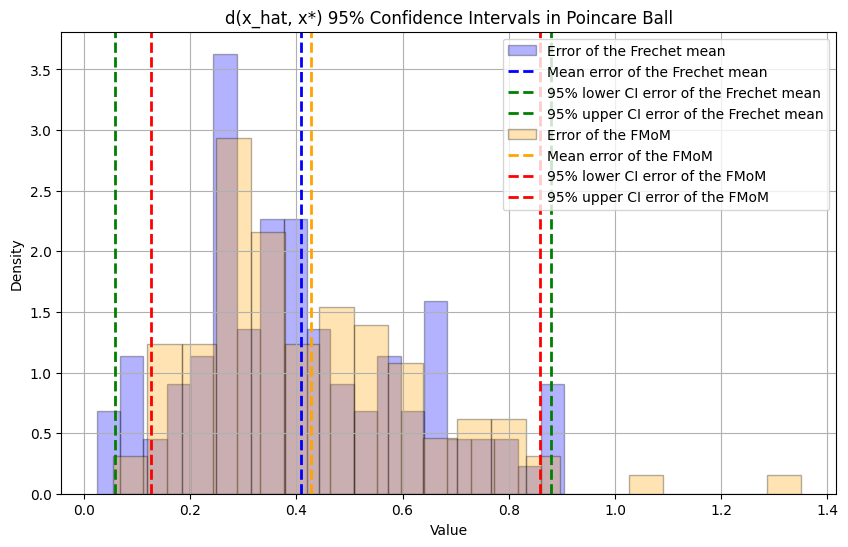}
\caption{Histogram, mean, and 95\% confidence interval for each experiment from 100 simulations. \textbf{Rows}: $\alpha = 0, 0.1, 0.5, 0.9$ from top to the bottom. \textbf{Columns}: $k = 50, 10, 5$ from left to right. For each experiment, comparison between the inductive mean estimator ($k = 1$) is displayed.}\label{figure_poincare_ball_ci}
\end{figure*}

\begin{figure*}[!ht]
\centering
\includegraphics[width=0.33\textwidth]{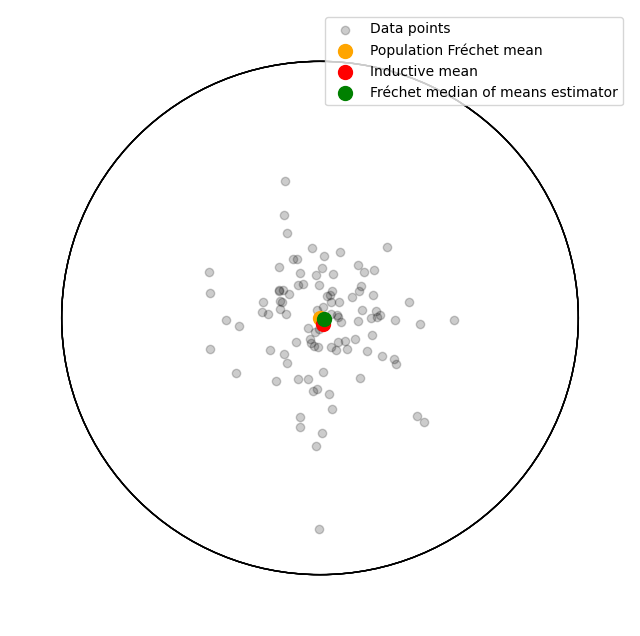}\hfill
\includegraphics[width=0.33\textwidth]{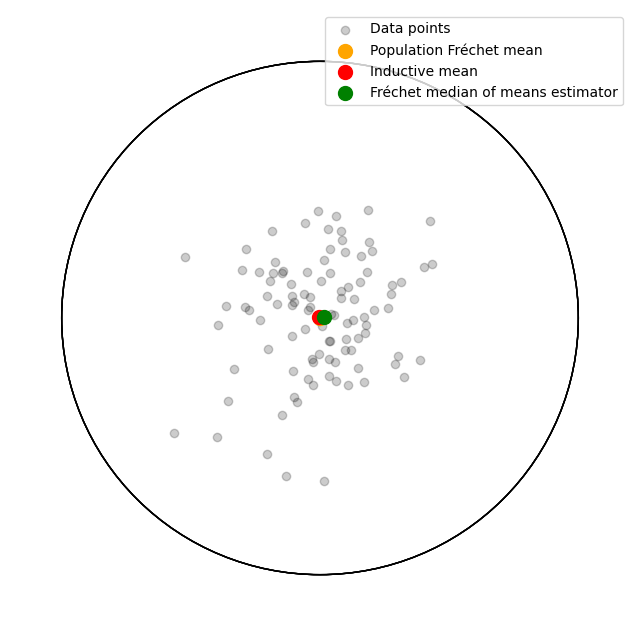}\hfill
\includegraphics[width=0.33\textwidth]{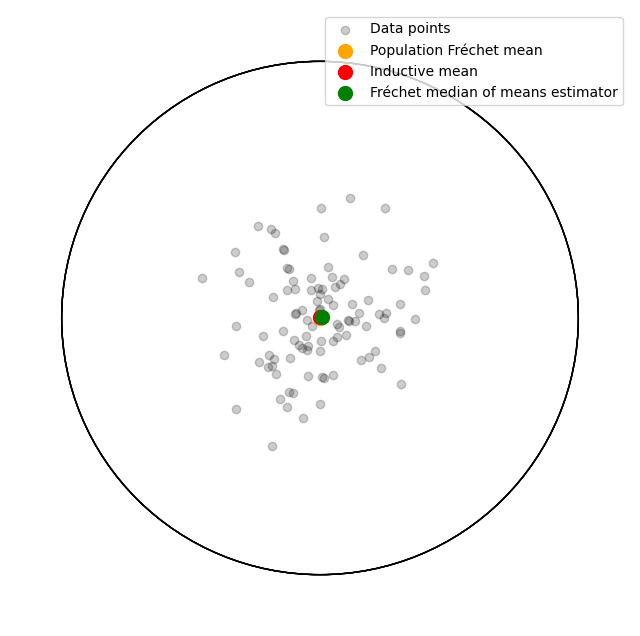}\\
\includegraphics[width=0.33\textwidth]{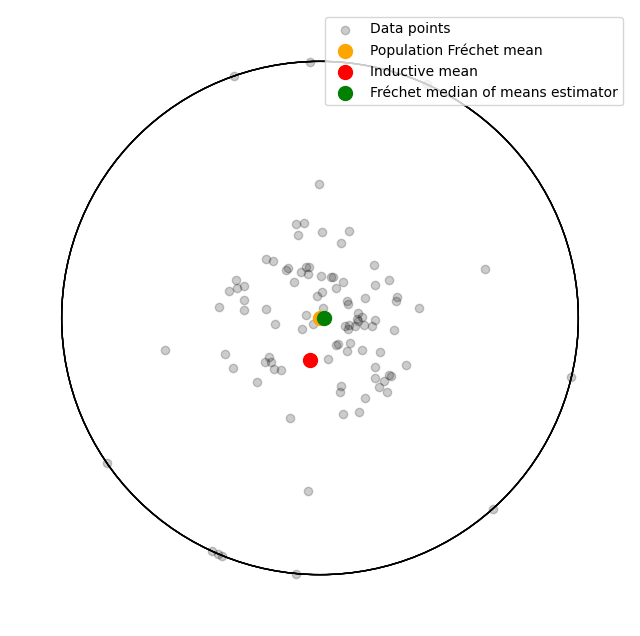}\hfill
\includegraphics[width=0.33\textwidth]{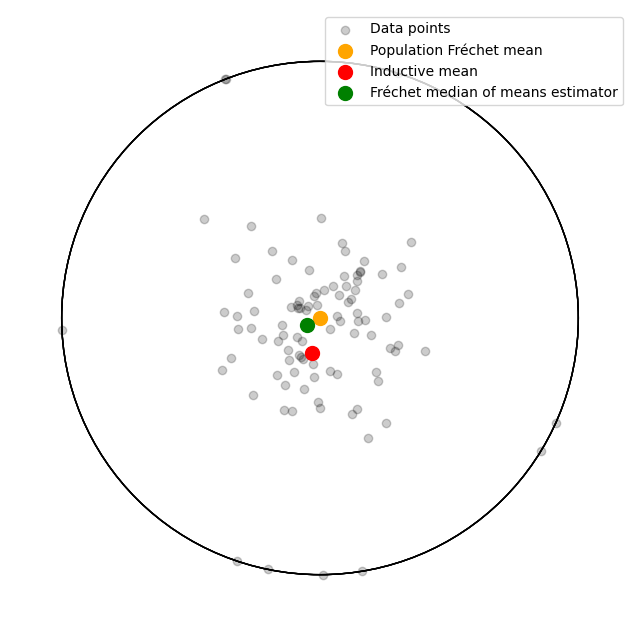}\hfill
\includegraphics[width=0.33\textwidth]{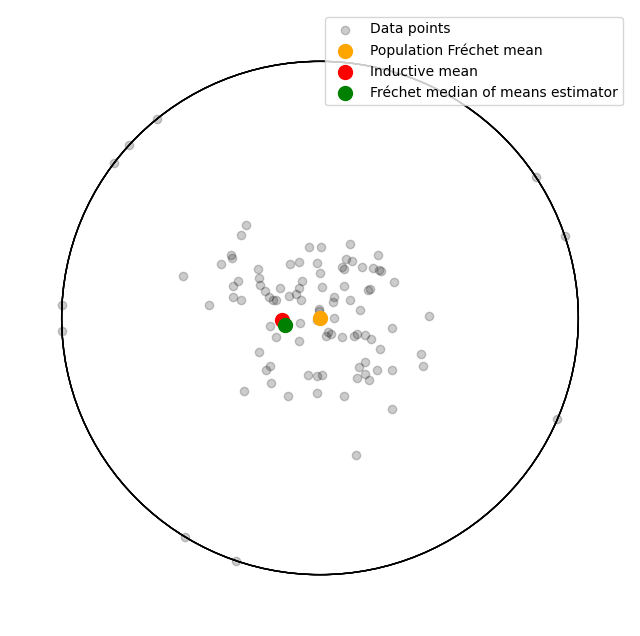}\\
\includegraphics[width=0.33\textwidth]{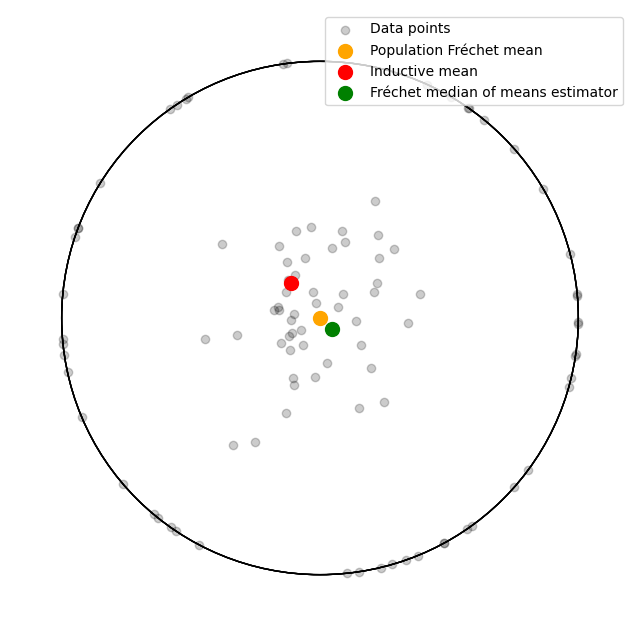}\hfill
\includegraphics[width=0.33\textwidth]{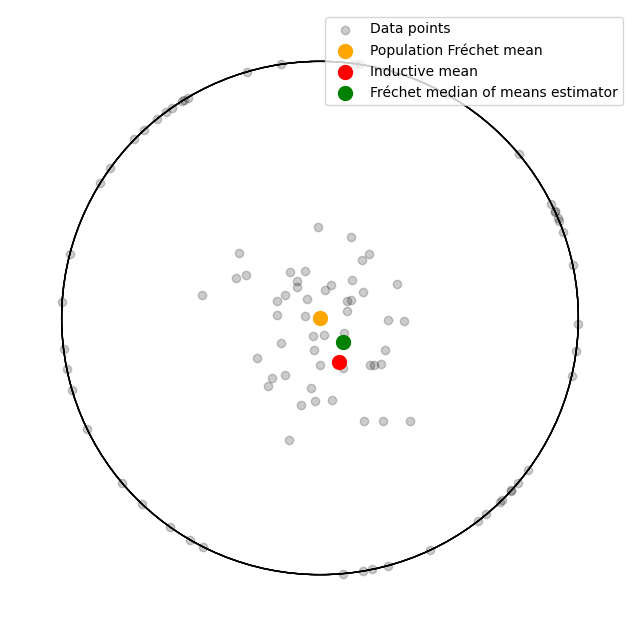}\hfill
\includegraphics[width=0.33\textwidth]{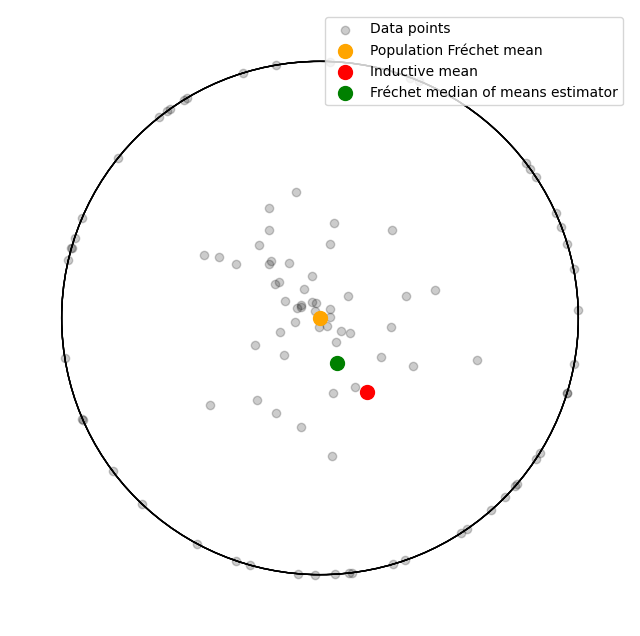}\\
\includegraphics[width=0.33\textwidth]{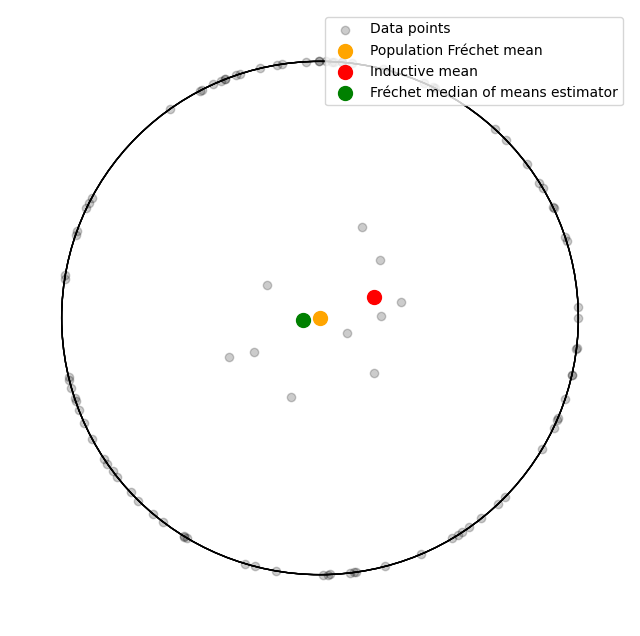}\hfill
\includegraphics[width=0.33\textwidth]{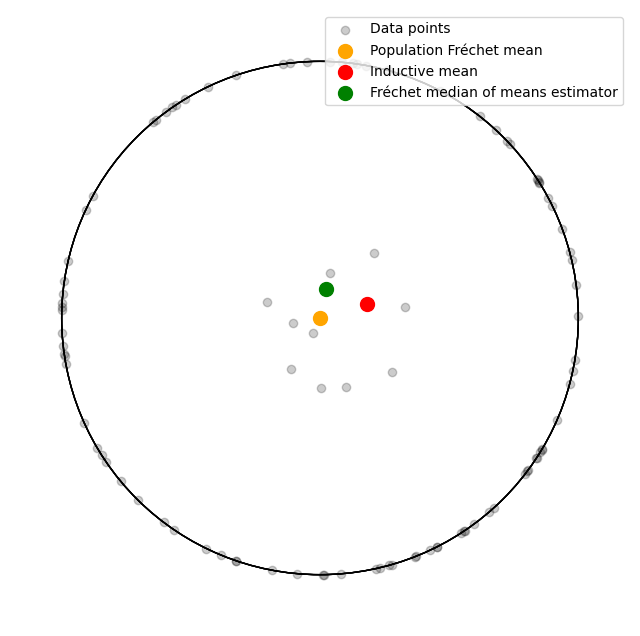}\hfill
\includegraphics[width=0.33\textwidth]{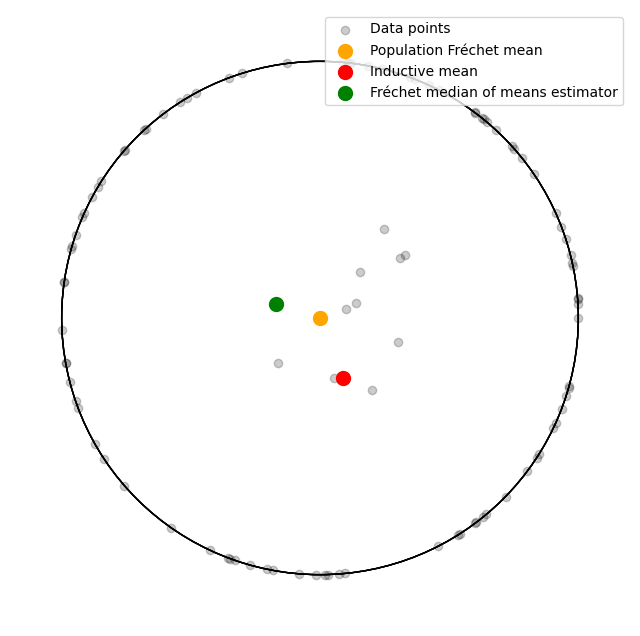}
\caption{One randomly chosen experiment results for each setting. \textbf{Rows}: $\alpha = 0, 0.1, 0.5, 0.9$ from top to the bottom. \textbf{Columns}: $k = 50, 10, 5$ from left to right. Grey points denote the samples, and the yellow, red, and green point denote the population Fr\'echet mean, inductive mean, and FMoM estimator respectively.}\label{figure_poincare_ball_res}
\end{figure*}

The results are consistent with previous experiments. When the tail is light ($\alpha = 0$), the inductive mean estimator itself achieves high accuracy and a small confidence region around 0. However, as the tail becomes heavy, our proposed estimator with the optimal block size (in this example, $k = 50$ for the most cases) performs significantly better.

\vfill

\end{document}